\DeclareMathOperator{\Gr}{Gr}
\DeclareMathOperator{\supp}{supp}
\DeclareMathOperator{\ddiv}{div}
\DeclareMathOperator{\pr}{pr}
\DeclareMathOperator{\Emb}{Emb}
\DeclareMathOperator{\vvol}{vol}
\DeclareMathOperator{\ann}{ann}
\DeclareMathOperator{\ad}{ad}
\DeclareMathOperator{\id}{id}
\DeclareMathOperator{\Diff}{Diff}
\DeclareMathOperator{\isomm}{isom}
\DeclareMathOperator{\Ham}{Ham}
\DeclareMathOperator{\img}{img}
\newcommand\isom{\textnormal{lin,\,inj}}
\newcommand\ham{\textnormal{ham}}
\newcommand\Sing{\textnormal{Sing}}
\newcommand\reg{\textnormal{reg}}
\newcommand\KKS{\textnormal{KKS}}
\newcommand\wt{\textnormal{wt}}
\newcommand\contact{\textnormal{contact}}
\newcommand\lin{\textnormal{lin}}
\newcommand\iso{\textnormal{iso}}
\theoremstyle{plain}
  \newtheorem{theorem}{Theorem}[section]
  \newtheorem{corollary}[theorem]{Corollary}
  \newtheorem{lemma}[theorem]{Lemma}
  \newtheorem{proposition}[theorem]{Proposition}
\theoremstyle{definition}
  \newtheorem{example}[theorem]{Example}
\theoremstyle{remark}
  \newtheorem{remark}[theorem]{Remark}
\begin{document}

\title{A dual pair for the contact group}

\author{Stefan Haller}

\address{Stefan Haller,
         Department of Mathematics,
         University of Vienna,
         Oskar-Mor\-gen\-stern-Platz 1,
         1090 Vienna,
         Austria.}

\email{stefan.haller@univie.ac.at}

\author{Cornelia Vizman}

\address{Cornelia Vizman,
         Department of Mathematics,
         West University of Timi\c soara,
         Bd. V.P\^arvan 4,
         300223-Timi\c soara, 
         Romania.}

\email{cornelia.vizman@e-uvt.ro}

\begin{abstract}
Generalizing the canonical symplectization of contact manifolds, we construct an infinite dimensional non-linear Stiefel manifold of weighted embeddings into a contact manifold.
This space carries a symplectic structure such that the contact group and the group of reparametrizations act in a Hamiltonian fashion with equivariant moment maps, respectively, giving rise to a dual pair, called the EPContact dual pair.
Via symplectic reduction, this dual pair provides a conceptual identification of non-linear Grassmannians of weighted submanifolds with certain coadjoint orbits of the contact group.
Moreover, the EPContact dual pair gives rise to singular solutions for the geodesic equation on the group of contact diffeomorphisms.
For the projectivized cotangent bundle, the EPContact dual pair is closely related to the EPDiff dual pair due to Holm and Marsden, and leads to a geometric description of some coadjoint orbits of the full diffeomorphism group.
\end{abstract}

\keywords{Contact manifold; Contact diffeomorphism group; Coadjoint orbit; Dual pair; Homogeneous space; Symplectic manifold; Symplectization; Manifold of mappings; Infinite dimensional manifold; Non-linear Grassmannian; Non-linear Stiefel manifold}

\subjclass[2010]{53D20 (primary); 53D10; 37K65; 58D05; 58D10}

\maketitle


\section{Introduction}\label{S:intro}

Every contact manifold gives rise to a symplectic manifold in a canonical way.
If the contact structure is described by a $1$-form $\alpha$ on $P$, then this symplectic manifold can be described as $P\times(\mathbb R\setminus0)$ with the symplectic form $d(t\alpha)$, where $t$ denotes the projection onto the second factor.
Regarding the contact structure as a subbundle of hyperplanes, $\xi\subseteq TP$, and denoting the corresponding line bundle over $P$ by $L:=TP/\xi$, this symplectization can be described more naturally as $M=L^*\setminus P$, with the symplectic form induced from the canonical symplectic form on $T^*P$ via the natural vector bundle inclusion $L^*\subseteq T^*P$.

The group of contact diffeomorphisms, $\Diff(P,\xi)$, acts on $M$ in a natural way, preserving the symplectic structure.
This action is in fact Hamiltonian and admits an equivariant moment map.
This moment map identifies (unions of) connected components of the symplectization $M$ with certain coadjoint orbits of the contact group.

\subsection{The EPContact dual pair}

In this paper we will introduce a natural infinite dimensional generalization $\mathcal M$ of the symplectization $M=L^*\setminus P$ with similar features.
To this end we fix a closed manifold $S$, we denote by $|\Lambda|_S$ its line bundle of densities, and we consider the space $\mathcal M$ of line bundle homomorphisms from $|\Lambda|_S^*\to S$ to $L^*\to P$ which restrict to a linear isomorphism on each fiber.
Every volume density on $S$ provides an identification $\mathcal M\cong C^\infty(S,M)$ and permits to regard elements $\Phi\in\mathcal M$ as pairs consisting of a map $\varphi\colon S\to P$ together with a contact form for $\xi$ along this map.
This space $\mathcal M$ can be equipped with the structure of a Fr\'echet manifold in a natural way, and admits a canonical (weakly non-degenerate) symplectic form.
The symplectization $M$ can be recovered by choosing $S$ to be a single point.

The contact group acts on $\mathcal M$ in a natural way, preserving the symplectic structure.
This action is Hamiltonian and admits an equivariant moment map, see Proposition~\ref{P:momentmaps}.
Furthermore, the group of reparametrizations, $\Diff(S)$, acts on $\mathcal M$ in a Hamiltonian fashion, also admitting an equivariant moment map.
On the non-linear Stiefel manifold of weighted embeddings, $\mathcal E\subseteq\mathcal M$, the latter action is free.
We show that the restrictions of these moment maps to $\mathcal E$,
\begin{equation}\label{epc}
\mathfrak X(P,\xi)^*\xleftarrow{\quad J_L^{\mathcal E}\quad}\mathcal E\xrightarrow{\quad J_R^{\mathcal E}\quad}\Omega^1(S,|\Lambda|_S)\subseteq\mathfrak X(S)^*,
\end{equation}
constitute a symplectic dual pair in the sense of Weinstein \cite{W83}, see Theorem~\ref{T:dp}.
Here $\mathfrak X(P,\xi)$ denotes the Lie algebra of contact vector fields on $P$, $\mathfrak X(S)$ denotes the Lie algebra of all vector fields on $S$, and $\Omega^1(S,|\Lambda|_S)$ denotes the space of smooth $1$-form densities on $S$.
The moment maps are given by $\langle J_L^{\mathcal E}(\Phi),X\rangle=\int_S\Phi(X\circ\varphi)$ for all $X\in\mathfrak X(P,\xi)$, and $\langle J_R^{\mathcal E}(\Phi),Z\rangle=\int_S\Phi(T\varphi\circ Z)$ for all $Z\in\mathfrak X(S)$.

Actually, we will show a stronger statement: The group $\Diff(S)$ acts freely and transitively on the fibers of $J_L^{\mathcal E}$, and the group $\Diff(P,\xi)$ acts locally transitive on the level sets of $J_R^{\mathcal E}$, see Proposition~\ref{jleg} and Theorem~\ref{T:Erho}.
Moreover, we will see that the level sets of both moment maps are smooth submanifolds of $\mathcal E$.
The dual pair in \eqref{epc} will be referred to as the \emph{EPContact dual pair}, because the left leg provides singular solutions of the EPContact equation, i.e., the Euler--Poincar\'e equation associated with the group of contact diffeomorphisms.

Recall that the projectivized cotangent bundle of a manifold $Q$ admits a canonical contact structure.
The EPContact dual pair corresponding to the projectivized cotangent bundle of $Q$ is closely related to the EPDiff dual pair, due to Holm--Marsden \cite{HM}, associated to the action of $\Diff(Q)$ and $\Diff(S)$ on $T^*\Emb(S,Q)$, the cotangent bundle of embeddings of $S$ into $Q$, see Section~\ref{S:EPDiff}.
This comparison leads to a geometric interpretation of some coadjoint orbits of $\Diff(Q)$, see Corollary~\ref{C:GisoQ}.

\subsection{Coadjoint orbits of the contact group}

The EPContact dual pair will be used to identify coadjoint orbits of the contact group via symplectic reduction for the reparametrization action, following the general principle: Symplectic reduction on one leg of a dual pair of moment maps leads to coadjoint orbits of the other group.
The same principle was used in \cite{GBV17}, where symplectic reduction on the right leg of the ideal fluid dual pair due to Marsden and Weinstein \cite{MW} led to coadjoint orbits of the Hamiltonian group consisting of weighted isotropic submanifolds of the symplectic manifold \cite{W90,Lee}.

To make this more precise, consider the non-linear Grassmannian of weighted submanifolds, $\mathcal G=\mathcal E/\Diff(S)$, consisting of pairs $(N,\gamma)$ where $N$ is a submanifold of type $S$ in $P$ and $\gamma\colon|\Lambda|^*_N\to L|_N^*$ is an isomorphism of line bundles which may be regarded as being akin to a trivialization of the contact structure along $N$.
This space $\mathcal G$ is a Fr\'echet manifold in a natural way and the projection $\mathcal E\to\mathcal G$ is a smooth principal bundle with structure group $\Diff(S)$.
The moment map $J_L^{\mathcal E}$ descends to a $\Diff(P,\xi)$-equivariant injective immersion $\mathcal G\to\mathfrak X(P,\xi)^*$, which permits to identify orbits of the contact group in $\mathcal G$ with coadjoint orbits.
Each $1$-form density $\rho\in\Omega^1(S,|\Lambda|_S)$ gives rise to a reduced space $\mathcal G^\rho\subseteq\mathcal G$ given by 
$$
\mathcal G^\rho=(J_R^{\mathcal E})^{-1}(\mathcal O_\rho)/\Diff(S)=(J^{\mathcal E}_R)^{-1}(\rho)/\Diff(S,\rho),
$$ 
where $\mathcal O_\rho$ denotes the $\Diff(S)$-orbit through $\rho$, and $\Diff(S,\rho)$ is the isotropy group of $\rho$.

Reduction works best for the zero level.
The corresponding reduced space $\mathcal G^0$ coincides with the subset of weighted isotropic submanifolds, $\mathcal G^\iso\subseteq\mathcal G$.
We will see that $\mathcal G^\iso$ is a smooth submanifold of $\mathcal G$ and that the action of the contact group on $\mathcal G^\iso$ admits local smooth sections.
In particular, this action is locally transitive.
Hence, the restriction of the moment map, $\mathcal G^\iso\to\mathfrak X(P,\xi)^*$, identifies (unions of) connected components of $\mathcal G^\iso$ with coadjoint orbits of the contact group.
Moreover, this identification intertwines the Kostant--Kirillov--Souriau symplectic form with the reduced symplectic form on $\mathcal G^\iso$.
These facts are summarized in Theorem~\ref{T:Giso}.

The situation is more delicate with regard to reduction at more general levels.
In this case the reduced spaces are more singular subsets of $\mathcal G$ and it is unclear, if the contact group acts locally transitive on them.
If $\rho$ is a contact $1$-form density on $S$, i.e., if $\ker\rho$ is a contact structure on $S$, then the reduced space $\mathcal G^\rho$ consists of certain weighted contact submanifolds of $P$ which are of type $(S,\ker\rho)$.
This is an open condition on the submanifold in view of Gray's stability theorem.
The condition on the weight, however, is rather singular: The space of all admissible (for $\mathcal G^\rho$) weights on a fixed contact submanifold may be identified with the $\Diff(S,\ker\rho)$-orbit of $\rho$.
The situation is tamer if we specialize to $1$-dimensional $S$, see Example~\ref{Ex:S1}.
In particular, (unions of) connected components in the spaces of weighted transverse knots of fixed length in a contact $3$-manifold, may be identified with coadjoint orbits of the contact group.

\subsection{Singular solutions of the Euler--Poincar\'e equation}\label{1.3}

Another motivation for studying the EPContact dual pair is the construction of singular solutions of the geodesic equation on the group of contact diffeomorphisms equipped with a right invariant Riemannian metric. 
This works analogous to the EPDiff equation, where the EPDiff dual pair has been used by Holm and Marsden \cite{HM} to construct singular solutions for the geodesic equation on the full diffeomorphism group.
Similarly, point vortices in two dimensional ideal fluids, a geodesic equation on the group of volume preserving diffeomorphisms, have been described using a dual pair by Marsden--Weinstein \cite{MW}, see the appendix.
The same kind of argument has been applied for the Vlasov equation in kinetic theory by Holm--Tronci \cite{HT} using the ideal fluid dual pair, and for the Euler--Poincar\'e equations on the group of automorphisms of a principal bundle in \cite{GBTV} using the EPAut dual pair \cite{GBV16}.

In all these cases the singular solutions of the system are obtained, via a moment map, from a collective Hamiltonian dynamics on a symplectic manifold, referred to as Clebsch variables.
This moment map turns out to be the left leg of a dual pair associated to commuting actions on the manifold of embeddings, while the right leg moment map gives conserved quantities by Noether's theorem.
We show that for the group of contact diffeomorphisms the situation is similar.

To describe this in more detail, let us start by briefly reviewing the geodesic equation on a Lie group with respect to a right invariant Riemannian metric.
We write the inner product on the Lie algebra $\mathfrak g$ in the form $(u,v)=\langle Qu,v\rangle$, where the inertia operator $Q\colon\mathfrak g\to\mathfrak g^*$ is symmetric and strictly positive.
Formally, the right trivialized geodesic equation on the Lie algebra $\mathfrak g$ is the Euler--Arnold equation,
\begin{equation}\label{E:EA}
\tfrac{d}{dt}u=-\ad_u^\top u,
\end{equation}
where the adjoint of the adjoint action can be characterized by $(\ad_u^\top v,w):=(v,\ad_uw)$ for all $u,v,w\in\mathfrak g$.
In other words, $\ad_u^\top=Q^{-1}\ad_u^*Q$, where $\ad_u^*\colon\mathfrak g^*\to\mathfrak g^*$ denotes the coadjoint action characterized by $\langle\ad_u^*m,v\rangle=\langle m,\ad_uv\rangle$ for $u,v\in\mathfrak g$ and $m\in\mathfrak g^*$.

Via Legendre transformation, using the momentum $m:=Qu$, the Euler--Arnold equation \eqref{E:EA} becomes the Lie--Poisson equation,
\begin{equation}\label{lp}
\tfrac{d}{dt}m=-\ad_u^*m,
\end{equation}
which is the Hamiltonian equation on the Poisson manifold $\mathfrak g^*$ for the Hamiltonian 
$$
h\colon\mathfrak g^*\to\mathbb R, \quad h(m):=\tfrac12\langle m,Q^{-1}m\rangle.
$$
Its solutions are confined to coadjoint orbits, the symplectic leaves of $\mathfrak g^*$.

Let us now turn to the group of contact diffeomorphisms on a contact manifold $(P,\xi)$.
Recall that its Lie algebra can be canonically identified with the space of contact vector fields, $\mathfrak X(P,\xi)=\Gamma^\infty(L)$, where $L=TP/\xi$.
For simplicity, we will assume $P$ to be closed.
We consider $\mathfrak X(P,\xi)^*=\Gamma^{-\infty}(L^*\otimes|\Lambda|_P)$, the space of distributional sections of $L^*\otimes|\Lambda|_P$, where $|\Lambda|_P$ denotes the bundle of densities on $P$.
We assume that the inertia operator, $Q\colon\Gamma^\infty(L)\to\Gamma^\infty(L^*\otimes|\Lambda|_P)$, is a pseudo-differential operator of real order $s$ which is symmetric, strictly positive, invertible, and its inverse, $Q^{-1}\colon\Gamma^\infty(L^*\otimes|\Lambda|_P)\to\Gamma^\infty(L)$, is a pseudo-differential operator of order $-s$.
Hence, the corresponding inner product, $(u,v)=\langle Qu,v\rangle$, generates the Sobolev $H^{s/2}$ topology on $\Gamma(L)$.
Using elliptic theory, such inertia operators can be easily constructed.
For instance, we may use $Q=\phi(1+\Delta)^{s/2}$, where $\Delta$ is a Laplacian acting on $\Gamma(L)$ which is non-negative and formally self-adjoint with respect to a volume density on $P$ and a fiberwise Euclidean metric on $L$, and $\phi\colon L\to L^*\otimes|\Lambda|_P$ denotes the isomorphism of line bundles provided by these geometric choices.

The Hamiltonian function $h(m)=\tfrac12\langle m,Q^{-1}m\rangle$ is well defined on $\Gamma^{-s/2}(L^*\otimes|\Lambda|_P)$, the space of sections which are of Sobolev class $-s/2$.
Note that the Sobolev space $\Gamma^{-s/2}(L^*\otimes|\Lambda|_P)$ is invariant under the coadjoint action of $\Diff(P,\xi)$.
If $k\in\Gamma^{-\infty}(L\boxtimes L)$ denotes the Schwartz kernel of $Q^{-1}$, then 
$$
h(m)=\tfrac12\langle k,m\boxtimes m\rangle=\frac12\int_{(x,y)\in P\times P}m(x)k(x,y)m(y)
$$
extends continuously (regularization) to $m\in\Gamma^{-s/2}(L^*\otimes|\Lambda|_P)$.
Assuming
\begin{equation}\label{E:scodim}
s>\dim P-\dim S,
\end{equation}
the moment map $J^{\mathcal E}_L\colon\mathcal E\to\mathfrak X(P,\xi)^*$ takes values in $\Gamma^{-s/2}(L^*\otimes|\Lambda|_P)=\Gamma^{s/2}(L)^*$.
Indeed, for $\Phi\in\mathcal E$ the distribution $J_L^{\mathcal E}(\Phi)$ is the push forward of a smooth section on $S$ along a smooth embedding $S\to P$, cf.\ Remark~\ref{R:JLRalpha}.
According to a standard property of the trace map on Sobolev spaces, see for instance \cite[Proposition~1.6 in Chapter~4]{TayI}, it thus provides a continuous functional on $\Gamma^{s/2}(L)$.
The map $J^{\mathcal E}_L$ is actually smooth into $\Gamma^{-s/2}(L^*\otimes|\Lambda|_P)$.
Hence, the pull back of the Hamiltonian $h$ to $\mathcal E$,
$$
H\colon\mathcal E\to\mathbb R,\quad H:=h\circ J^{\mathcal E}_L,
$$
is smooth.
Although the symplectic form on $\mathcal E$ is only weakly non-degenerate, the function $H$ gives rise to a Hamiltonian vector field $X_H$ on (and tangential to) $\mathcal E$, cf.\ the discussion in \cite[Section~4.2.2]{EP}.
Indeed, since $J_L^{\mathcal E}$ is a moment map, we formally have $X_H(\Phi)=\zeta^{\mathcal E}_{Q^{-1}J^{\mathcal E}_L(\Phi)}(\Phi)$ and thus
\begin{equation}\label{E:XH}
X_H(\Phi)=\zeta^{L^*}_{Q^{-1}J^{\mathcal E}_L(\Phi)}\circ\Phi,
\end{equation}
where $\zeta^{\mathcal E}$ and $\zeta^{L^*}$ denote the infinitesimal $\Diff(P,\xi)$-actions on $\mathcal E$ and $L^*$, respectively, cf.\ \eqref{E:dhXM} and \eqref{E:zetaXM} below.
By microlocal regularity, $Q^{-1}J^{\mathcal E}_L(\Phi)$ is smooth along the submanifold $N$ in $P$ determined by $\Phi$, see for instance \cite[Corollary~9.4 in Chapter~7]{TayII} or \cite[Proposition~3.11 in Chapter IV\S3]{GS77}.
Furthermore, since $\zeta^{L^*}\colon\Gamma^\infty(L)\to\Gamma^\infty(TL^*)$ is essentially given by a first order differential operator, it extends to distributional sections, and $\zeta^{L^*}_{Q^{-1}J^{\mathcal E}_L(\Phi)}$ is smooth along $L^*|_N$.
In particular, the latter is smooth along $\Phi$ and thus $X_H(\Phi)$ is a tangent vector to $\mathcal E$ at $\Phi$, cf.~\eqref{E:XH}.

Every solution $\Phi_t\in\mathcal E$ of the Hamilton equation
\begin{equation}\label{E:XHE}
\tfrac{d}{dt}\Phi_t=X_H(\Phi_t)
\end{equation}
provides a singular (peakon) solution $u_t:=Q^{-1}J_L^{\mathcal E}(\Phi_t)\in\Gamma^{s/2}(L)$ of the Euler--Arnold equation \eqref{E:EA} with momentum $m_t:=J_L^{\mathcal E}(\Phi_t)\in\Gamma^{-s/2}(L^*\otimes|\Lambda|_P)$.
The support of the distributional momentum $m_t$ coincides with the smooth submanifold determined by $\Phi_t$, and this also coincides with the singular support of $u_t$.
Due to the dual pair property, each solution $\Phi_t$ of \eqref{E:XHE} remains in a level of the other moment map, $J_R^{\mathcal E}\colon\mathcal E\to\mathfrak X(S)^*$, and is thus confined to a $\Diff(P,\xi)$ orbit in $\mathcal E$.
Hence, its momentum $m_t=J_L^{\mathcal E}(\Phi_t)$ is constrained to a coadjoint orbit.

If $S$ is a single point, then the assumption in~\eqref{E:scodim} implies that the distributional kernel $k$ of $Q^{-1}$ is continuous.
In this case we have $\mathcal E=L^*\setminus P$ and $H$ is given by the (smooth) restriction of $k$ to the diagonal.

The initial value problem for the EPContact equation has been studied by Ebin and Preston in \cite{EP}.
They consider inertia operators of the form $Q=1+\Delta$, where the Laplacian is with respect to a Riemannian metric which is associated with the contact structure.

It appears to be interesting \cite{BD} to replace the class of inertia operators considered above with operators in the Heisenberg calculus \cite{BG88,T84,P08}, a calculus of pseudo-differential operators which is closely linked to the contact geometry on $P$.
Using the Rockland theorem, one can construct pseudo-differential operators $Q\colon\Gamma^\infty(L)\to\Gamma^\infty(L^*\otimes|\Lambda|_P)$ of Heisenberg order $s$ which are symmetric, strictly positive, invertible, and such that the inverse, $Q^{-1}\colon\Gamma^\infty(L^*\otimes|\Lambda|_P)\to\Gamma^\infty(L)$, is of Heisenberg order $-s$.
For instance, we may use $Q=\phi(1+\Delta)^{s/2}$, where $\Delta$ is a subLaplacian.
Everything mentioned above remains valid, provided the Sobolev spaces are being replaced with the corresponding spaces in the Heisenberg Sobolev scale and the assumption \eqref{E:scodim} is replaced by the stronger condition $s/2>\dim P-\dim S$.

\subsection{Structure of the paper} 

The remaining part of the paper is organized as follows.
In Section~\ref{S:St} we construct the EPContact dual pair.
In Section~\ref{S:hs} we show that the level sets of the right moment map are submanifolds on which the contact group acts locally transitive.
In Section~\ref{S:Gr} we study the reduced spaces obtained by factoring out the group of reparametrizations.
In Section~\ref{S:EPDiff} we compare the EPContact dual pair for the projectivized cotangent bundle with the EPDiff dual pair of Holm and Marsden.
In the appendix we provide a comparison with a dual pair due to Marsden and Weinstein for the Euler equation of an ideal fluid.

\subsection{Acknowledgments}
The first author would like to thank the West University of Ti\-mi\-\c soa\-ra for the warm hospitality and Shantanu Dave for a helpful reference.
He gratefully acknowledges the support of the Austrian Science Fund (FWF): project numbers P31663-N35 and Y963-N35.
The second author was partially supported by CNCS UEFISCDI, project number PN-III-P4-ID-PCE-2016-0778.

\section{Weighted non-linear Stiefel manifolds}\label{S:St}

The aim of this section is to construct the EPContact dual pair, see Theorem~\ref{T:dp}.

\subsection{Canonical symplectization of contact manifolds}\label{SS:P}

In this section we set up our notation and recall some well known facts about the symplectization of contact manifolds.
We emphasize the structure that will be generalized in the subsequent sections.
For more details we refer to \cite[Appendix~4.E]{A89} and \cite[Section 12.3]{MR}.

Consider a contact manifold $(P,\xi)$ where $\xi\subseteq TP$ denotes the contact subbundle.
We write $L:=TP/\xi$ for the corresponding line bundle.
The vector bundle projection of the dual line bundle will be denoted by $\pi^{L^*}\colon L^*\to P$.
The canonical projection $TP\to L$ permits to regard the dual bundle as a subbundle of the cotangent bundle, $L^*\subseteq T^*P$.
We denote by $\theta^{L^*}\in\Omega^1(L^*)$ the pull back of the canonical $1$-form on $T^*P$.
\footnote{If $\xi=\ker\alpha$ and $L^*\cong P\times\mathbb R$ denotes the trivialization provided by $\alpha$, then $\theta^{L^*}=t(\pi^{L^*})^*\alpha$, where $t$ denotes the projection onto the factor $\mathbb R$.}
Hence, the defining equation for $\theta^{L^*}$ is
\begin{equation}\label{E:thetaLs}
\theta^{L^*}_\beta(V)={\beta(T_\beta\pi^{L^*}\cdot V)},
\end{equation}
where $\beta\in L^*_x$, $x\in P$, and $V\in T_\beta L^*$.
The pairing in \eqref{E:thetaLs} can be viewed either as a pairing between $L^*_x$ and $L_x$ by considering the class of $T_\beta\pi^{L^*}\cdot V$ in $L_x=T_xP/\xi_x$, or as a pairing between $T^*P$ and $TP$ by considering $\beta$ an element of $L^*_x\subseteq T^*_xP$.
It is well known that the closed $2$-form 
$$
\omega^{L^*}:=d\theta^{L^*}\in\Omega^2(L^*)
$$ 
restricts to a symplectic form on $M:=L^*\setminus P$, 
which will be denoted by $\omega^M=d\theta^M$.
The symplectic manifold $(M,\omega^M)$ is called the symplectization of the contact manifold $(P,\xi)$.
Note that both forms are homogeneous of degree one with respect to the fiberwise scalar multiplication $\delta_t\colon L^*\to L^*$, that is $\delta_t^*\theta^{L^*}=t\theta^{L^*}$ and $\delta_t^*\omega^{L^*}=t\omega^{L^*}$ for all $t\in\mathbb R$.

\subsubsection*{The action by the contact group}

Let us write $\Diff(P,\xi)$ for the group of contact diffeomorphisms.
Since contact diffeomorphisms preserve the contact subbundle $\xi$, the $\Diff(P,\xi)$-action on $P$ lifts to an action on the total space of $L^*$.
For $g\in\Diff(P,\xi)$, we let $\Psi_g^{L^*}\in\Diff(L^*)$ denote the corresponding (fiberwise linear) diffeomorphism on $L^*$.
Clearly, $\pi^{L^*}\circ\Psi^{L^*}_g=g\circ\pi^{L^*}$, $\delta_t\circ\Psi^{L^*}_g=\Psi^{L^*}_g\circ\delta_t$, and $\Psi_{g_2g_1}^{L^*}=\Psi_{g_2}^{L^*}\Psi_{g_1}^{L^*}$ for all $g,g_1,g_2\in\Diff(P,\xi)$ and $t\in\mathbb R$.
Moreover, the contact group action preserves $\theta^{L^*}$ and $\omega^{L^*}$, that is $(\Psi_g^{L^*})^*\theta^{L^*}=\theta^{L^*}$ and $(\Psi_g^{L^*})^*\omega^{L^*}=\omega^{L^*}$ for all $g\in\Diff(P,\xi)$.
Noticing that the symplectic piece $M\subseteq L^*$ is invariant under the contact group action, we write $\Psi^M_g$ for the restricted action.

Let $\mathfrak X(P,\xi)$ denote the Lie algebra of contact vector fields.
Via the projection $TP\to L$, every (contact) vector field gives rise to a section of $L$ which may in turn be regarded as a fiberwise linear function on the total space of $L^*$.
This provides canonical identifications,
\begin{equation}\label{E:XPxi}
\mathfrak X(P,\xi)=\Gamma^\infty(L)=C^\infty_\lin(L^*),\qquad X\leftrightarrow\textrm{$X$ mod $\xi$}\leftrightarrow h_X,
\end{equation}
where $h_X\in C^\infty_\lin(L^*)$ is the fiberwise linear function given by $h_X(\beta)=\beta(X_x)$ for $\beta\in L_x^*$ and $x\in P$.
Clearly, this identification is equivariant, i.e.,
\begin{equation}\label{E:hXequi}
(\Psi^{L^*}_g)^*h_X=h_{g^*X}
\end{equation}
for all $g\in\Diff(P,\xi)$ and $X\in\mathfrak X(P,\xi)$.

For $X\in\mathfrak X(P,\xi)$, we denote the corresponding fundamental vector field (infinitesimal action) on the total space of $L^*$ by $\zeta^{L^*}_X\in\mathfrak X(L^*)$.
Clearly, 
\begin{equation}\label{E:zetaL*P}
T\pi^{L^*}\circ\zeta^{L^*}_X=X\circ\pi^{L^*},
\end{equation}
$T\delta_t\circ\zeta^{L^*}_X=\zeta^{L^*}_X\circ\delta_t$, $(\Psi^{L^*}_g)^*\zeta^{L^*}_X=\zeta^{L^*}_{g^*X}$, and $[\zeta^{L^*}_{X_1},\zeta^{L^*}_{X_2}]=\zeta^{L^*}_{[X_1,X_2]}$ for all $X,X_1,X_2\in\mathfrak X(P,\xi)$, $g\in\Diff(P,\xi)$ and $t\in\mathbb R$.
From the definition of $\theta^{L^*}$ in \eqref{E:thetaLs} one immediately gets
\begin{equation}\label{E:hX}
i_{\zeta^{L^*}_X}\theta^{L^*}=h_X
\end{equation}
for $X\in\mathfrak X(P,\xi)$.
Invariance of $\theta^{L^*}$ and $\omega^{L^*}$ yields infinitesimal invariance $L_{\zeta^{L^*}_X}\theta^{L^*}=0$ and $L_{\zeta^{L^*}_X}\omega^{L^*}=0$, respectively, for all $X\in\mathfrak X(P,\xi)$.
Using Cartan's formula and \eqref{E:hX}, we obtain 
\begin{equation}\label{E:dhX}
i_{\zeta^{L^*}_X}\omega^{L^*}=-dh_X
\end{equation}
as well as the following formula for the bracket of contact vector fields,
\begin{equation}\label{E:bracket}
h_{[X,Y]}=\zeta^{L^*}_X\cdot h_Y=-\zeta^{L^*}_Y\cdot h_X=\omega^{L^*}(\zeta^{L^*}_X,\zeta^{L^*}_Y),
\end{equation}
for all $X,Y\in\mathfrak X(P,\xi)$.

Over the symplectic piece $M=L^*\setminus P$ the Hamiltonian vector field corresponding to $h_X^M:=h_X|_M$ coincides with $\zeta^M_X:=\zeta^{L^*}_X|_M$, see \eqref{E:dhX}.
Moreover, \eqref{E:bracket} implies 
\begin{equation}\label{E:Poisson}
h^M_{[X,Y]}=\{h^M_X,h^M_Y\},
\end{equation}
where the right hand side denotes the Poisson bracket on $C^\infty(M)$.
The formulas \eqref{E:dhX} and \eqref{E:hXequi} above imply that the action of $\Diff(P,\xi)$ on $M$ is Hamiltonian with equivariant moment map 
\begin{equation}\label{E:JLs} 
J^M\colon M\to\mathfrak X(P,\xi)^*,\qquad\langle J^M(\beta),X\rangle:=h_X^M(\beta)=\beta(X),
\end{equation}
where $\beta\in M$ and $X\in\mathfrak X(P,\xi)$.

\begin{remark}
A slightly more explicit, yet less natural description is possible if the contact structure is described by a contact form $\alpha\in\Omega^1(P)$, that is, if $\xi=\ker\alpha$.
Such a contact form provides a trivialization $P\times\mathbb R\cong L^*\subseteq T^*P$, $(x,t)\leftrightarrow t\alpha_x$.
Via this identification we have $\theta^{L^*}=t(\pi^{L^*})^*\alpha$, and the fiberwise linear function $h_X$ from \eqref{E:XPxi} becomes $h_X(x,t)=t(i_X\alpha)(x)$ where $x\in P$ and $t\in\mathbb R$.
A diffeomorphism $g$ of $P$ is a contact diffeomorphism iff it preserves the contact form up to a conformal factor, i.e., iff there exists a (nowhere vanishing) function $a_g$ on $P$ such that $g^*\alpha=a_g\alpha$.
Similarly, a vector field $X$ on $P$ is a contact vector field iff it satisfies $L_X\alpha=\lambda_X\alpha$, for a conformal factor $\lambda_X\in C^\infty(P)$.
Both, the group action of $\Diff(P,\xi)$ and the Lie algebra action of $\mathfrak X(P,\xi)$ on $L^*$, written in the trivialization $L^*\cong P\times\mathbb R$, involve the conformal factors.
More explicitly, we have $\Psi^{L^*}_g(x,t)=(g(x),ta_g(x))$ and $\zeta_X^{L^*}(x,t)=(X(x),t\lambda_X(x)\partial_t)$.
\end{remark}

\subsubsection*{Coadjoint orbits}

It is well known that each connected component of a symplectic manifold is equivariantly symplectomorphic to a coadjoint orbit of its Hamiltonian group, see for instance \cite{GBV17}.
We will now formulate a similar statement for the group $\Diff_c(P,\xi)$ of compactly supported contact diffeomorphisms which can be considered as a special case of Theorem~\ref{T:Giso} below.

For $\beta\in M$, the isotropy subgroup $\Diff_c(P,\xi;\beta)$ is a closed Lie subgroup of $\Diff_c(P,\xi)$.
Moreover, the map provided by the action, $\Diff_c(P,\xi)\to M$, $g\mapsto\Psi^M_g(\beta)$, admits a local smooth right inverse defined in a neighborhood of $\beta$.
In particular, the group $\Diff_c(P,\xi)$ acts locally and infinitesimally transitive on $M$, and the $\Diff_c(P,\xi)$-orbit through $\beta$ is open and closed in $M$.
Denoting this orbit by $M_\beta$, the map $\Diff_c(P,\xi)\to M_\beta$ is a smooth principal bundle with structure group $\Diff_c(P,\xi;\beta)$.
Hence, 
$$
M_\beta=\Diff_c(P,\xi)/\Diff_c(P,\xi;\beta)
$$ 
may be regarded as a homogeneous space.
The moment map \eqref{E:JLs} induces an equivariant diffeomorphism between $M_\beta$ and the coadjoint orbit of $\Diff_c(P,\xi)$ through $J^M(\beta)\in\mathfrak X(P,\xi)^*$.
By infinitesimal equivariance of $J^M$ and \eqref{E:bracket}, this diffeomorphism intertwines the Kostant--Kirillov--Souriau symplectic form $\omega^\KKS$ with $\omega^M$. 
Indeed, for $\beta\in M$ and $X,Y\in\mathfrak X(P,\xi)$, we get
\begin{multline*}
((J^M)^*\omega^\KKS)(\zeta_X^M(\beta),\zeta_Y^M(\beta))
\\=\omega^\KKS\left(\zeta_X^{\mathfrak X(P,\xi)^*}(J^M(\beta)),\zeta_Y^{\mathfrak X(P,\xi)^*}(J^M(\beta))\right)
\\=\langle J^M(\beta),[X,Y]\rangle
\stackrel{\eqref{E:JLs}}{=}h^M_{[X,Y]}(\beta)
\stackrel{\eqref{E:bracket}}{=}\omega^M(\zeta_X^M(\beta),\zeta_Y^M(\beta)),
\end{multline*}
whence $(J^M)^*\omega^\KKS=\omega^M$.

In particular, each connected component of $M$ is equivariantly symplectomorphic to a coadjoint orbit of the identity component in $\Diff_c(P,\xi)$.
If $P$ connected and the contact structure is not coorientable, then $M$ is connected, hence a coadjoint orbit of $\Diff_c(P,\xi)$.

\subsection{Moment maps on a manifold of weighted maps}\label{SS:M}

In this section we introduce an infinite dimensional generalization $\mathcal L$ of $L^*$ that also carries a canonical $1$-form $\theta^{\mathcal L}$ which is invariant under a natural $\Diff(P,\xi)$-action.

To this end, we fix a closed manifold $S$.
We let $|\Lambda|_S$ denote the line bundle of densities \cite[Chapter~16]{L} on $S$, and we write $\pi^{|\Lambda|_S}\colon|\Lambda|_S\to S$ for the corresponding vector bundle projection.
Recall that sections of $|\Lambda|_S$ can be integrated over $S$ in a natural way.
Every orientation of $S$ provides an isomorphism of line bundles $|\Lambda|_S\cong\Lambda^{\dim(S)}T^*S$.
A nowhere vanishing density, i.e., a section in $\Gamma^\infty(|\Lambda|_S\setminus S)$, will be referred to as a \emph{volume density.}

We denote the space of line bundle homomorphisms from $|\Lambda|_S^*\to S$ to $L^*\to P$ by
$$
\mathcal L
:=C^\infty_\lin(|\Lambda|_S^*,L^*)
:=\left\{\Phi\in C^\infty(|\Lambda|_S^*,L^*)\middle|\forall t\in\mathbb R:\Phi\circ\delta^{|\Lambda|_S^*}_t=\delta_t^{L^*}\circ\Phi\right\}.
$$
There is a canonical map $\pi^{\mathcal L}\colon\mathcal L\to C^\infty(S,P)$, characterized by 
\begin{equation}\label{E:piLL}
\pi^{L^*}\circ\Phi=\pi^{\mathcal L}(\Phi)\circ\pi^{|\Lambda|_S^*},
\end{equation} 
for all $\Phi\in\mathcal L$.
For the fiber over $\varphi\in C^\infty(S,P)$ we have a canonical identification,
\begin{equation}\label{E:Lphi}
\mathcal L_\varphi:=(\pi^{\mathcal L})^{-1}(\varphi)=\Gamma^\infty(|\Lambda|_S\otimes\varphi^*L^*).
\end{equation}

The contact group $\Diff(P,\xi)$ acts from the left on $\mathcal L$, and the reparametrization group $\Diff(S)$ acts on $\mathcal L$ from the right in an obvious way.
More explicitly, these actions are given by
\begin{equation}\label{boxaction}
\Psi^{\mathcal L}_g(\Phi):=\Psi_g^{L^*}\circ\Phi
\qquad\text{and}\qquad
\psi_f^{\mathcal L}(\Phi):=\Phi\circ\psi^{|\Lambda|_S^*}_f,
\end{equation}
where $\Phi\in\mathcal L$, $g\in\Diff(P,\xi)$, $f\in\Diff(S)$, and $\psi^{|\Lambda|_S^*}_f\in\Diff(|\Lambda|_S^*)$ denotes the induced (fiberwise linear) action 
of $\Diff(S)$ on the total space of $|\Lambda|_S^*$.
The two actions on $\mathcal L$ commute, and the map $\pi^{\mathcal L}$ intertwines them with the corresponding actions on $C^\infty(S,P)$ given by 
$$
\Psi^{C^\infty(S,P)}_g(\varphi)=g\circ\varphi\qquad\text{and}\qquad\psi^{C^\infty(S,P)}_f(\varphi)=\varphi\circ f,
$$ 
where $g\in\Diff(P,\xi)$, $f\in\Diff(S)$, and $\varphi\in C^\infty(S,P)$.
More explicitly, we have
$\Psi^{\mathcal L}_{g_2g_1}=\Psi^{\mathcal L}_{g_2}\circ\Psi^{\mathcal L}_{g_1}$,
$\pi^{\mathcal L}\circ\Psi_g^{\mathcal L}=\Psi_g^{C^\infty(S,P)}\circ\pi^{\mathcal L}$,
$\psi^{\mathcal L}_{f_1f_2}=\psi^{\mathcal L}_{f_2}\circ\psi^{\mathcal L}_{f_1}$,
$\pi^{\mathcal L}\circ\psi_f^{\mathcal L}=\psi_f^{C^\infty(S,P)}\circ\pi^{\mathcal L}$, and
$\Psi^{\mathcal L}_g\circ\psi^{\mathcal L}_f=\psi^{\mathcal L}_f\circ\Psi^{\mathcal L}_g$,
for $g,g_1,g_2\in\Diff(P,\xi)$ and $f,f_1,f_2\in\Diff(S)$.

\begin{remark}\label{R:mutriv}
Let $\mu\in\Gamma^\infty(|\Lambda|_S\setminus S)$ be a volume density on $S$, i.e., a nowhere vanishing smooth section of $|\Lambda|_S$.
Such a volume density provides an identification
\begin{equation*}
\mathcal L\cong C^\infty(S,L^*),\qquad \Phi\leftrightarrow\phi=\Phi\circ\hat\mu,
\end{equation*}
where $\hat\mu\in\Gamma^\infty(|\Lambda|_S^*)$ denotes the section dual to $\mu$, that is $\hat\mu(\mu)=1$.
In this picture the actions on $\mathcal L$ take the form 
$$
\Psi^{\mathcal L}_g(\phi)=\Psi^{L^*}_g\circ\phi
\qquad\text{and}\qquad
\psi^{\mathcal L}_f(\phi)=\frac{f^*\mu}\mu\cdot(\phi\circ f),
$$
where $\phi\in C^\infty(S,L^*)$, $g\in\Diff(P,\xi)$ and $f\in\Diff(S)$.
\end{remark}

The space $\mathcal L$ can be equipped with the structure of a smooth Fr\'echet manifold such that the identification $\mathcal L\cong C^\infty(S,L^*)$ in Remark~\ref{R:mutriv} becomes a diffeomorphism, for each choice of volume density $\mu$.
The map $\pi^{\mathcal L}\colon\mathcal L\to C^\infty(S,P)$ is a smooth vector bundle.
The tangent space at $\Phi\in\mathcal L$ can be canonically identified as
\begin{equation}\label{spiral}
T_\Phi\mathcal L
=\left\{\eta\in C^\infty(|\Lambda|_S^*,TL^*)\left|\begin{array}{l}\pi^{TL^*}\circ\eta=\Phi\quad \textrm{and}\\\forall t\in\mathbb R:\eta\circ\delta^{|\Lambda|_S^*}_t=T\delta_t^{L^*}\circ\eta\end{array}\right\}\right..
\end{equation}
The actions of $\Diff(P,\xi)$ and $\Diff(S)$ on $\mathcal L$ are smooth.
For $X\in\mathfrak X(P,\xi)$ and $Z\in\mathfrak X(S)$, the corresponding fundamental vector fields are
\begin{equation}\label{E:zetaXM}
\zeta_X^{\mathcal L}(\Phi)=\zeta_X^{L^*}\circ\Phi
\qquad\text{and}\qquad
\zeta_Z^{\mathcal L}(\Phi)=T\Phi\circ\zeta^{|\Lambda|_S^*}_Z
\end{equation}
where $\Phi\in\mathcal L$ and $\zeta^{|\Lambda|_S^*}_Z\in\mathfrak X(|\Lambda|_S^*)$ denotes the fundamental vector field of the $\Diff(S)$-action on the total space of $|\Lambda|_S^*$.
Note that
\begin{equation}\label{E:zetaLambda*S}
T\pi^{|\Lambda|_S^*}\circ\zeta_Z^{|\Lambda|_S^*}=Z\circ\pi^{|\Lambda|_S^*}\qquad\text{and}\qquad\bigl(\delta_t^{|\Lambda|_S^*}\bigr)^*\zeta_Z^{|\Lambda|_S^*}=\zeta_Z^{|\Lambda|_S^*}.
\end{equation}
Clearly, $(\Psi^{\mathcal L}_g)^*\zeta^{\mathcal L}_X=\zeta^{\mathcal L}_{g^*X}$,
$\zeta^{\mathcal L}_{[X_1,X_2]}=[\zeta^{\mathcal L}_{X_1},\zeta^{\mathcal L}_{X_2}]$,
$T\pi^{\mathcal L}\circ\zeta^{\mathcal L}_X=\zeta^{C^\infty(S,P)}_X\circ\pi^{\mathcal L}$,
$(\psi^{\mathcal L}_f)^*\zeta^{\mathcal L}_Z=\zeta^{\mathcal L}_{f_*Z}$,
$\zeta^{\mathcal L}_{[Z_1,Z_2]}=-[\zeta^{\mathcal L}_{Z_1},\zeta^{\mathcal L}_{Z_2}]$,
$T\pi^{\mathcal L}\circ\zeta^{\mathcal L}_Z=\zeta^{C^\infty(S,P)}_Z\circ\pi^{\mathcal L}$, and
$[\zeta^{\mathcal L}_X,\zeta^{\mathcal L}_Z]=0$,
where $g\in\Diff(P,\xi)$, $X,X_1,X_2\in\mathfrak X(P,\xi)$, $f\in\Diff(S)$, $Z,Z_1,Z_2\in\mathfrak X(S)$.

\subsubsection*{The canonical $1$-form}

Consider the $1$-form $\theta^{\mathcal L}$ on $\mathcal L$ defined by
\begin{equation}\label{E:thetaL}
\theta^{\mathcal L}(\eta):=\int_S\theta^{L^*}(\eta),
\end{equation}
where $\eta\in T_\Phi\mathcal L$ and $\Phi\in\mathcal L$. 
Note here that, because of \eqref{spiral}, inserting $\eta$ into $\theta^{L^*}$ leads to a fiberwise linear map $\theta^{L^*}(\eta)\colon|\Lambda|_S^*\to\mathbb R$ which, when regarded as a section of $|\Lambda|_S$, may be integrated over $S$.
By invariance of $\theta^{L^*}$, the $1$-form $\theta^{\mathcal L}$ is invariant under both actions, i.e., we have 
$$
(\Psi_g^{\mathcal L})^*\theta^{\mathcal L}=\theta^{\mathcal L}\qquad\textrm{and}\qquad(\psi_f^{\mathcal L})^*\theta^{\mathcal L}=\theta^{\mathcal L}
$$ 
for all $g\in\Diff(P,\xi)$ and $f\in\Diff(S)$.
The corresponding infinitesimal invariance reads 
$$
L_{\zeta^{\mathcal L}_X}\theta^{\mathcal L}=0\qquad\textrm{and}\qquad L_{\zeta^{\mathcal L}_Z}\theta^{\mathcal L}=0,
$$
where $X\in\mathfrak X(P,\xi)$ and $Z\in\mathfrak X(S)$.

Moreover, we introduce the $2$-form $\omega^{\mathcal L}:=d\theta^{\mathcal L}$ on $\mathcal L$.
By invariance of $\theta^{\mathcal L}$, this $2$-form is invariant under both actions too.
More explicitly, we have $(\Psi_g^{\mathcal L})^*\omega^{\mathcal L}=\omega^{\mathcal L}$ and $(\psi_f^{\mathcal L})^*\omega^{\mathcal L}=\omega^{\mathcal L}$ for $g\in\Diff(P,\xi)$ and $f\in\Diff(S)$, as well as infinitesimal invariance $L_{\zeta^{\mathcal L}_X}\omega^{\mathcal L}=0$ and $L_{\zeta^{\mathcal L}_Z}\omega^{\mathcal L}=0$ for $X\in\mathfrak X(P,\xi)$ and $Z\in\mathfrak X(S)$.
Clearly, see \cite{V11,GBV12},
\begin{equation}\label{E:omegaM}
\omega^{\mathcal L}(\eta_1,\eta_2)=\int_S\omega^{L^*}(\eta_1,\eta_2)
\end{equation}
where $\eta_1,\eta_2\in T_\Phi\mathcal L$ and $\Phi\in\mathcal L$.
As before, the fiberwise linear function $\omega^{L^*}(\eta_1,\eta_2)$ on $|\Lambda|_S^*$ may be regarded as a section of $|\Lambda|_S$ which can be integrated over $S$.

The exact $2$-form $\omega^{\mathcal L}=d\theta^{\mathcal L}$ is not (weakly) non-degenerate, because $\omega^{L^*}$ is not symplectic on all of $L^*$.
In the subsequent section, we will restrict to an invariant open subset of $\mathcal L$ on which $\omega^{\mathcal L}$ is (weakly) symplectic.
On this symplectic part, both actions are Hamiltonian with equivariant moment map.
This is a well known formal consequence of the fact that these actions preserve the $1$-form $\theta^{\mathcal L}$, see for instance \cite[Section~12.3]{MR}.
The corresponding Hamiltonian functions and moment maps are given by contraction of the fundamental vector fields with the canonical $1$-form.
However, these geometric objects make sense on all of $\mathcal L$.
Hence, we will now formulate their fundamental relations on $\mathcal L$.

\subsubsection*{The left moment map}

For $X\in\mathfrak X(P,\xi)$, consider the function $h^{\mathcal L}_X\colon\mathcal L\to\mathbb R$ defined by
\begin{equation}\label{E:hXM}
i_{\zeta^{\mathcal L}_X}\theta^{\mathcal L}=:h^{\mathcal L}_X.
\end{equation}
Using the infinitesimal invariance, $L_{\zeta^{\mathcal L}_X}\theta^{\mathcal L}=0$, we obtain
\begin{equation}\label{E:dhXM}
i_{\zeta^{\mathcal L}_X}\omega^{\mathcal L}=-dh^{\mathcal L}_X
\end{equation}
analogous to \eqref{E:dhX}, as well as
\begin{equation}\label{E:bracketM}
h^{\mathcal L}_{[X,Y]}=\zeta^{\mathcal L}_X\cdot h^{\mathcal L}_Y
=-\zeta^{\mathcal L}_Y\cdot h^{\mathcal L}_X
=\omega^{\mathcal L}(\zeta^{\mathcal L}_X,\zeta^{\mathcal L}_Y),
\end{equation}
for all $X,Y\in\mathfrak X(P,\xi)$, cf.~\eqref{E:bracket}.
From the invariance of $\theta^{\mathcal L}$ we obtain, cf.~\eqref{E:hXequi}
\begin{equation}\label{E:PghXM}
(\Psi_g^{\mathcal L})^*h^{\mathcal L}_X=h^{\mathcal L}_{g^*X}
\qquad\text{and}\qquad
(\psi_f^{\mathcal L})^*h^{\mathcal L}_X=h^{\mathcal L}_X
\end{equation}
for all $f\in\Diff(S)$, $g\in\Diff(P,\xi)$, and $X\in\mathfrak X(P,\xi)$.
We introduce a smooth map
\begin{equation}\label{plus}
J^{\mathcal L}_L\colon\mathcal L\to\mathfrak X(P,\xi)^*
\end{equation}
by putting $\langle J_L^{\mathcal L},X\rangle:=h^{\mathcal L}_X$, that is,
\begin{equation}\label{E:JLdef}
\langle J_L^{\mathcal L}(\Phi),X\rangle:=h^{\mathcal L}_X(\Phi)=\theta^{\mathcal L}(\zeta^{\mathcal L}_X(\Phi)),
\end{equation}
where $\Phi\in\mathcal L$ and $X\in\mathfrak X(P,\xi)$.
The equations in \eqref{E:PghXM} may be written in the form
\begin{equation}\label{E:PghXMJ}
\langle J_L^{\mathcal L}\circ\Psi_g^{\mathcal L},X\rangle=\langle\Psi_g^{\mathcal L},g^*X\rangle
\qquad\text{and}\qquad
J_L^{\mathcal L}\circ\psi^{\mathcal L}_f=J_L^{\mathcal L},
\end{equation}
where $g\in\Diff(P,\xi)$, $X\in\mathfrak X(P,\xi)$, and $f\in\Diff(S)$.
Combining \eqref{E:hXM}, \eqref{E:zetaXM}, \eqref{E:thetaL}, \eqref{E:thetaLs}, \eqref{E:zetaL*P}, and \eqref{E:piLL}, we obtain
\begin{equation}\label{E:hXMint}
h_X^{\mathcal L}(\Phi)
=\int_S\Phi(X\circ\varphi),
\end{equation}
where $\varphi\in C^\infty(S,P)$, $\Phi\in\mathcal L_\varphi=\Gamma^\infty(|\Lambda|_S\otimes\varphi^*L^*)$, and $X\in\mathfrak X(P,\xi)=\Gamma^\infty(L)$, cf.~\eqref{E:Lphi} and \eqref{E:XPxi}.
Here we use the canonical contraction between $L^*\subseteq T^*P$ and $TP$ to obtain the density $\Phi(X\circ\varphi)\in\Gamma^\infty(|\Lambda|_S)$. 
More explicitly, the verification of \eqref{E:hXMint} reads:
\begin{multline*}
h_X^{\mathcal L}(\Phi)
\stackrel{\eqref{E:hXM}}{=}\theta^{\mathcal L}(\zeta_X^{\mathcal L}(\Phi))
\stackrel{\eqref{E:zetaXM}}{=}\theta^{\mathcal L}(\zeta_X^{L^*}\circ\Phi)
\stackrel{\eqref{E:thetaL}}{=}\int_S\theta^{L^*}(\zeta_X^{L^*}\circ\Phi)
\\\stackrel{\eqref{E:thetaLs}}{=}\int_S\Phi(T\pi^{L^*}\circ\zeta_X^{L^*}\circ\Phi)
\stackrel{\eqref{E:zetaL*P}}{=}\int_S\Phi(X\circ\pi^{L^*}\circ\Phi)
\\\stackrel{\eqref{E:piLL}}{=}\int_S\Phi(X\circ\varphi\circ\pi^{|\Lambda|_S^*})
=\int_S\Phi(X\circ\varphi).
\end{multline*}

\subsubsection*{The right moment map}

For $Z\in\mathfrak X(S)$, consider the function $k^{\mathcal L}_Z\colon\mathcal L\to\mathbb R$ defined by
\begin{equation}\label{E:kZM}
i_{\zeta^{\mathcal L}_Z}\theta^{\mathcal L}=:k^{\mathcal L}_Z.
\end{equation}
Using the infinitesimal invariance, $L_{\zeta^{\mathcal L}_Z}\theta^{\mathcal L}=0$, we obtain
\begin{equation}\label{E:dkZM}
i_{\zeta^{\mathcal L}_Z}\omega^{\mathcal L}=-dk^{\mathcal L}_Z
\end{equation}
as well as
\begin{equation}\label{E:bracketMM}
k^{\mathcal L}_{[Z_1,Z_2]}=\zeta^{\mathcal L}_{Z_1}\cdot k^{\mathcal L}_{Z_2}
=-\zeta^{\mathcal L}_{Z_2}\cdot k^{\mathcal L}_{Z_1}
=\omega^{\mathcal L}(\zeta^{\mathcal L}_{Z_1},\zeta^{\mathcal L}_{Z_2}),
\end{equation}
for all $Z,Z_1,Z_2\in\mathfrak X(S)$.
From the invariance of $\theta^{\mathcal L}$ we obtain
\begin{equation}\label{E:PgkZM}
(\Psi_g^{\mathcal L})^*k^{\mathcal L}_Z=k^{\mathcal L}_Z
\qquad\text{and}\qquad
(\psi_f^{\mathcal L})^*k^{\mathcal L}_Z=k^{\mathcal L}_{f_*Z}
\end{equation}
for all $g\in\Diff(P,\xi)$, $f\in\Diff(S)$, and $Z\in\mathfrak X(S)$.
We introduce a smooth map
\begin{equation}\label{E:JRLdp}
J_R^{\mathcal L}\colon\mathcal L\to\Omega^1(S,|\Lambda|_S)\subseteq\mathfrak X(S)^*
\end{equation}
by putting $\langle J_R^{\mathcal L},Z\rangle:=k^{\mathcal L}_Z$, that is,
\begin{equation}\label{E:JRdef}
\langle J_R^{\mathcal L}(\Phi),Z\rangle:=k^{\mathcal L}_Z(\Phi)=\theta^{\mathcal L}(\zeta^{\mathcal L}_Z(\Phi)),
\end{equation}
where $\Phi\in\mathcal L$ and $Z\in\mathfrak X(S)$.
The equations in \eqref{E:PgkZM} may be written in the form
\begin{equation}\label{E:PgkZMJ}
\langle J_R^{\mathcal L}\circ\psi_f^{\mathcal L},Z\rangle=\langle\psi_f^{\mathcal L},f_*Z\rangle
\qquad\text{and}\qquad
J_R^{\mathcal L}\circ\Psi^{\mathcal L}_g=J_R^{\mathcal L},
\end{equation}
where $f\in\Diff(S)$, $Z\in\mathfrak X(S)$, and $g\in\Diff(P,\xi)$.
In view of \eqref{E:kZM}, \eqref{E:zetaXM}, \eqref{E:thetaL}, \eqref{E:thetaLs}, \eqref{E:piLL}, and \eqref{E:zetaLambda*S}, we have
\begin{equation}\label{E:kZMint}
k^{\mathcal L}_Z(\Phi)=\int_S\Phi(T\varphi\circ Z),
\end{equation}
where $\varphi\in C^\infty(S,P)$, $\Phi\in\mathcal L_\varphi=\Gamma^\infty(|\Lambda|_S\otimes\varphi^*L^*)$, and $Z\in\mathfrak X(S)$, cf.~\eqref{E:Lphi}.
As before, we use the canonical contraction between $L^*\subseteq T^*P$ and $TP$ to obtain a density $\Phi(T\varphi\circ Z)\in\Gamma^\infty(|\Lambda|_S)$.
More explicitly, the verification of \eqref{E:kZMint} reads:
\begin{multline*}
k^{\mathcal L}_Z(\Phi)
\stackrel{\eqref{E:kZM}}{=}\theta^{\mathcal L}(\zeta_Z^{\mathcal L}(\Phi))
\stackrel{\eqref{E:zetaXM}}{=}\theta^{\mathcal L}(T\Phi\circ\zeta_Z^{|\Lambda|_S^*})
\stackrel{\eqref{E:thetaL}}{=}\int_S\theta^{L^*}(T\Phi\circ\zeta_Z^{|\Lambda|_S^*})
\\\stackrel{\eqref{E:thetaLs}}{=}\int_S\Phi(T\pi^{L^*}\circ T\Phi\circ\zeta_Z^{|\Lambda|_S^*})
\stackrel{\eqref{E:piLL}}{=}\int_S\Phi(T\varphi\circ T\pi^{|\Lambda|_S^*}\circ\zeta_Z^{|\Lambda|_S^*})
\\\stackrel{\eqref{E:zetaLambda*S}}{=}\int_S\Phi(T\varphi\circ Z\circ\pi^{|\Lambda|_S^*})
=\int_S\Phi(T\varphi\circ Z).
\end{multline*}

It follows from \eqref{E:JRdef} and \eqref{E:kZMint} that $J_R^{\mathcal L}(\Phi)$ is indeed a smooth $1$-form density as indicated in \eqref{E:JRLdp}, i.e., $J_R^{\mathcal L}(\Phi)\in\Omega^1(S,|\Lambda|_S)$.
More precisely, we have
\begin{equation}\label{E:JRpw}
\lambda\bigl(J_R^{\mathcal L}(\Phi)(Z)\bigr)=(\Phi\circ\lambda)(T\varphi\circ Z)
\end{equation}
for $Z\in\mathfrak X(S)$ and $\lambda\in\Gamma^\infty(|\Lambda|_S^*)$.
Note that $J_R^{\mathcal L}(\Phi)$ can also be characterized as the smooth $1$-form density on $S$ corresponding to the $1$-homogeneous vertical $1$-form $\Phi^*\theta^{L^*}$ on the total space of $|\Lambda|_S^*$.
More explicitly, we have
\begin{equation}\label{E:JRpwv}
J_R^{\mathcal L}(\Phi)=\Phi^*\theta^{L^*}
\end{equation}
via the canonical identification
\begin{equation}\label{E:Omega1Lambda}
\Omega^1(S,|\Lambda|_S)=\left\{\beta\in\Omega^1(|\Lambda|_S^*)\middle|\textrm{$\beta$ is vertical and $\bigl(\delta_t^{|\Lambda|_S^*}\bigr)^*\beta=t\beta$ for all $t\in\mathbb R$}\right\}.
\end{equation}
Here $\rho\in\Omega^1(S,|\Lambda|_S)$ corresponds to $\beta\in\Omega^1(|\Lambda|_S^*)$ given by $\beta(W)=w(\rho(T_w\pi^{|\Lambda|_S^*}\cdot W))$ where $w\in|\Lambda|_S^*$ and $W\in T_w|\Lambda|_S^*$.

\begin{remark}\label{R:mutriv2}
Using a volume density $\mu$ on $S$ to identify $\mathcal L\cong C^\infty(S,L^*)$ as in Remark~\ref{R:mutriv}, the differential forms $\theta^{\mathcal L}$ and $\omega^{\mathcal L}$ become, see \eqref{E:thetaL} and \eqref{E:omegaM},
\begin{equation}\label{E:omegaCSL}
\theta^{\mathcal L}(\eta)=\int_S\theta^{L^*}(\eta)\mu
\qquad\text{and}\qquad
\omega^{\mathcal L}(\eta_1,\eta_2)=\int_S\omega^{L^*}(\eta_1,\eta_2)\mu,
\end{equation}
where $\phi\in C^\infty(S,L^*)$ and
$\eta,\eta_1,\eta_2\in T_\phi C^\infty(S,L^*)=\{\eta\in C^\infty(S,TL^*):\pi^{TL^*}\circ\eta=\phi\}$.
For $X\in\mathfrak X(P,\xi)$ and $Z\in\mathfrak X(S)$, the fundamental vector fields $\zeta_X^{\mathcal L}$ and $\zeta_Z^{\mathcal L}$ identify to 
\begin{equation}\label{E:zetaCSM}
\zeta_X^{\mathcal L}(\phi)=\zeta_X^{L^*}\circ\phi
\qquad\text{and}\qquad
	\zeta_Z^{\mathcal L}(\phi)=T\phi\circ Z+\ddiv_\mu(Z)\cdot(R\circ\phi),
\end{equation}
where $\ddiv_\mu(Z):=\frac{L_Z\mu}\mu$ denotes the $\mu$-divergence, and $R:=\frac\partial{\partial t}|_{t=1}\delta_t^{L^*}\in\mathfrak X(L^*)$ denotes the Euler vector field of $L^*$.
The functions $h^{\mathcal L}_X$ and $k^{\mathcal L}_Z$ become, see \eqref{E:hXMint} and \eqref{E:kZMint},
\begin{equation}\label{E:hCSL}
h^{\mathcal L}_X(\phi)=\int_S(\phi^*h_X)\mu
\qquad\text{and}\qquad
k^{\mathcal L}_Z(\phi)=\int_S(\phi^*\theta^{L^*})(Z)\mu.
\end{equation}
Hence, the maps $J_L^{\mathcal L}$ and $J_R^{\mathcal L}$ identify to 
\begin{align}\label{E:JLCSL}
J_L^{\mathcal L}\colon C^\infty(S,L^*)&\to C^\infty(L^*)^*\to\mathfrak X(P,\xi)^*,
&J_L^{\mathcal L}(\phi)&=\phi_*\mu
\\\label{E:JRCSL}
J_R^{\mathcal L}\colon C^\infty(S,L^*)&\to\Omega^1(S,|\Lambda|_S) \subseteq\mathfrak X(S)^*,
&J_R^{\mathcal L}(\phi)&=\phi^*\theta^{L^*}\otimes\mu
\end{align}
where $\phi\in C^\infty(S,L^*)$ and we use the inclusion $\mathfrak X(P,\xi)=C^\infty_\lin(L^*)\subseteq C^\infty(L^*)$, see \eqref{E:XPxi}.
\end{remark}

\subsection{The symplectic part}

Let $\mathcal M\subseteq\mathcal L=C^\infty_\lin(|\Lambda|_S^*,L^*)$ denote the open subset of line bundle homomorphisms $|\Lambda|_S^*\to L^*$ which restrict to a linear isomorphism on each fiber,
\begin{equation}\label{eme}
\mathcal M:=C^\infty_\isom(|\Lambda|_S^*,L^*).
\end{equation}
\footnote{Using a volume density on $S$ to identify $\mathcal L\cong C^\infty(S,L^*)$ as in Remark~\ref{R:mutriv}, the space $\mathcal M$ corresponds to $C^\infty(S,L^*\setminus P)$. When $\xi=\ker\alpha$ for a contact form $\alpha$, then the corresponding trivialization $L^*\cong P\times\mathbb R$ yields a further identification $\mathcal M\cong C^\infty(S,P)\times C^\infty(S,\mathbb R^\times)$.}
We will denote the restriction to $\mathcal M$ of any action, function, form, or vector field on $\mathcal L$ considered above, by replacing the superscript $\mathcal L$ with $\mathcal M$.
Because $L^*\setminus P$ is symplectic, the $2$-form $\omega^{\mathcal M}=d\theta^{\mathcal M}$ is (weakly) non-degenerate, whence symplectic, cf.~\eqref{E:omegaM}.

The map $\pi^{\mathcal M}\colon\mathcal M\to C^\infty(S,P)$ is a principal fiber bundle with structure group $C^\infty(S,\mathbb R^\times)$, provided we restrict to the connected components of $C^\infty(S,P)$ in the image of $\pi^{\mathcal M}$.
If $\varphi$ is in one of these components, then the fiber $\mathcal M_\varphi:=(\pi^{\mathcal M})^{-1}(\varphi)$ may be canonically identified with the space of nowhere vanishing sections of
the line bundle $|\Lambda|_S\otimes\varphi^*L^*$, cf.~\eqref{E:Lphi}.
Thus, disregarding the density part, $\mathcal M_\varphi$ may be considered as the space of contact forms for $\xi$ along the map $\varphi\colon S\to P$.

Clearly, $\mathcal M$ is invariant under the action of the groups $\Diff(P,\xi)$ and $\Diff(S)$.
Since both actions preserve the $1$-form $\theta^{\mathcal M}$, they are Hamiltonian with equivariant moment maps obtained by contraction of the $1$-form with the infinitesimal generators, see for instance \cite[Section~12.3]{MR}.
We summarize these facts in the following proposition.

\begin{proposition}\label{P:momentmaps}
(a)
The action of the group $\Diff(P,\xi)$ on $\mathcal M$ is Hamiltonian with an equivariant moment map $J_L^{\mathcal M}\colon\mathcal M\to\mathfrak X(P,\xi)^*$, given by
\begin{equation}\label{E:jlm}
\langle J_L^{\mathcal M}(\Phi),X\rangle
=(i_{\zeta^{\mathcal M}_X}\theta^{\mathcal M})(\Phi)
=h^{\mathcal M}_X(\Phi)
=\int_S\Phi(X\circ\varphi),
\end{equation}
where $\Phi\in\mathcal M_\varphi$ and $X\in\mathfrak X(P,\xi)$.
Moreover, the moment map $J_L^{\mathcal M}$ is $\Diff(S)$-invariant.
More explicitly, we have $(\Psi_g^{\mathcal M})^*\omega^{\mathcal M}=\omega^{\mathcal M}$, $i_{\zeta^{\mathcal M}_X}\omega^{\mathcal M}=-d\langle J^{\mathcal M}_L,X\rangle$, $\langle J_L^{\mathcal M}\circ\Psi_g^{\mathcal M},X\rangle=\langle\Psi_g^{\mathcal M},g^*X\rangle$, and $J_L^{\mathcal M}\circ\psi^{\mathcal M}_f=J_L^{\mathcal M}$ where $g\in\Diff(P,\xi)$, $X\in\mathfrak X(P,\xi)$, and $f\in\Diff(S)$.

(b)
The action of the group $\Diff(S)$ on $\mathcal M$ is Hamiltonian with an equivariant moment map $J_R^{\mathcal M}\colon\mathcal M\to\Omega^1(S,|\Lambda|_S)\subseteq\mathfrak X(S)^*$, given by
\begin{equation}\label{E:jrm}
\langle J_R^{\mathcal M}(\Phi),Z\rangle
=(i_{\zeta^{\mathcal M}_Z}\theta^{\mathcal M})(\Phi)
=k^{\mathcal M}_Z(\Phi)
=\int_S\Phi(T\varphi\circ Z),
\end{equation}
where $\Phi\in\mathcal M_\varphi$ and $Z\in\mathfrak X(S)$.
Moreover, the moment map $J_R^{\mathcal M}$ is $\Diff(P,\xi)$-invariant.
More explicitly, we have $(\psi_f^{\mathcal M})^*\omega^{\mathcal M}=\omega^{\mathcal M}$, $i_{\zeta^{\mathcal M}_Z}\omega^{\mathcal M}=-d\langle J_R^{\mathcal M},Z\rangle$, $\langle J_R^{\mathcal M}\circ\psi_f^{\mathcal M},Z\rangle=\langle\psi_f^{\mathcal M},f_*Z\rangle$, and $J_R^{\mathcal M}\circ\Psi^{\mathcal M}_g=J_R^{\mathcal M}$, 
where $f\in\Diff(S)$, $Z\in\mathfrak X(S)$, and $g\in\Diff(P,\xi)$.
\end{proposition}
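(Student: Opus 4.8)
The plan is to deduce the proposition by restricting to $\mathcal M$ all the structures and identities already established for $\mathcal L$ in Section~\ref{SS:M}. The starting observation is that $\mathcal M$ is an open subset of $\mathcal L$ which is invariant under both group actions: this is immediate from \eqref{boxaction}, since post-composition with the fiberwise linear isomorphism $\Psi^{L^*}_g$ and pre-composition with the fiberwise linear isomorphism $\psi^{|\Lambda|_S^*}_f$ each preserve the property of being a fiberwise linear isomorphism. Hence the $1$-form $\theta^{\mathcal M}=\theta^{\mathcal L}|_{\mathcal M}$, the closed $2$-form $\omega^{\mathcal M}=d\theta^{\mathcal M}$, the functions $h^{\mathcal M}_X$ and $k^{\mathcal M}_Z$, and the maps $J^{\mathcal M}_L$, $J^{\mathcal M}_R$ all carry over, together with the relations \eqref{E:dhXM}, \eqref{E:bracketM}, \eqref{E:PghXMJ}, \eqref{E:dkZM}, \eqref{E:bracketMM}, \eqref{E:PgkZMJ} and the integral formulas \eqref{E:hXMint}, \eqref{E:kZMint}, each read on $\mathcal M$.

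For part~(a) I would first record that $\omega^{\mathcal M}$ is weakly non-degenerate. Using a volume density $\mu$ on $S$ to identify $\mathcal M\cong C^\infty(S,M)$ as in Remark~\ref{R:mutriv2}, one has $\omega^{\mathcal M}(\eta_1,\eta_2)=\int_S\omega^{M}(\eta_1,\eta_2)\mu$ by \eqref{E:omegaCSL}; since $\omega^M$ is pointwise non-degenerate on $M=L^*\setminus P$ and $\mu$ is a volume density, $\omega^{\mathcal M}(\eta_1,\eta_2)=0$ for all $\eta_2\in\Gamma^\infty(\phi^*TM)$ forces $\eta_1=0$ by a standard localization argument. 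Then the restriction of \eqref{E:dhXM} reads $i_{\zeta^{\mathcal M}_X}\omega^{\mathcal M}=-dh^{\mathcal M}_X=-d\langle J^{\mathcal M}_L,X\rangle$, so $\zeta^{\mathcal M}_X$ is the Hamiltonian vector field of $\langle J^{\mathcal M}_L,X\rangle$; this is precisely the statement that the $\Diff(P,\xi)$-action is Hamiltonian with moment map $J^{\mathcal M}_L$. Infinitesimal equivariance is the restriction of \eqref{E:bracketM}, namely $\langle J^{\mathcal M}_L,[X,Y]\rangle=\omega^{\mathcal M}(\zeta^{\mathcal M}_X,\zeta^{\mathcal M}_Y)=\{\langle J^{\mathcal M}_L,X\rangle,\langle J^{\mathcal M}_L,Y\rangle\}$, while the group-level coadjoint equivariance and the $\Diff(S)$-invariance of $J^{\mathcal M}_L$ are the two equations in \eqref{E:PghXMJ}. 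Finally, \eqref{E:hXMint} supplies the closed-form expression $\langle J^{\mathcal M}_L(\Phi),X\rangle=\int_S\Phi(X\circ\varphi)$. Part~(b) is entirely parallel, using \eqref{E:dkZM}, \eqref{E:bracketMM}, \eqref{E:PgkZMJ} and \eqref{E:kZMint} in place of their part~(a) analogues; in particular the ``More explicitly'' clauses of the statement are literally the identities \eqref{E:dhXM}, \eqref{E:PghXMJ}, \eqref{E:dkZM}, \eqref{E:PgkZMJ} restricted to $\mathcal M$.

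Since every substantive computation was already carried out on $\mathcal L$, there is no genuine obstacle here; the proof is essentially bookkeeping. The one point that deserves a word of care is the weak non-degeneracy of $\omega^{\mathcal M}$ --- i.e.\ checking that the pointwise symplectic structure on $M$ integrates to a weak symplectic structure on the mapping-type Fr\'echet manifold $\mathcal M$ --- which is the standard argument of \cite{V11,GBV12}, sketched above. Everything else follows by invoking openness and invariance of $\mathcal M\subseteq\mathcal L$ to transplant the relevant identities, together with the explicit formulas \eqref{E:hXMint} and \eqref{E:kZMint}.
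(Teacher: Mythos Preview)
Your proposal is correct and mirrors the paper's own argument: the paper's proof is a two-line invocation of the identities already established on $\mathcal L$ (specifically \eqref{E:dhXM}, \eqref{E:JLdef}, \eqref{E:PghXMJ}, \eqref{E:hXMint} for (a) and \eqref{E:dkZM}, \eqref{E:JRdef}, \eqref{E:PgkZMJ}, \eqref{E:kZMint} for (b)), restricted to the open invariant subset $\mathcal M$. Your additional remark on weak non-degeneracy of $\omega^{\mathcal M}$ is handled by the paper in the paragraph immediately preceding the proposition rather than inside the proof, but the content is the same.
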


\begin{proof}
The statements in (a) follow immediately from \eqref{E:dhXM}, \eqref{E:JLdef}, \eqref{E:PghXMJ}, and \eqref{E:hXMint}.
The statements in (b) follow immediately from \eqref{E:dkZM}, \eqref{E:JRdef}, \eqref{E:PgkZMJ}, and \eqref{E:kZMint}.
\end{proof}

\begin{remark}
If $S$ is a single point, then we recover the symplectization discussed in Section~\ref{SS:P}.
More precisely, in this case the canonical volume density on $S$ provides a canonical isomorphism between the line bundles $\pi^{\mathcal L}\colon\mathcal L\to C^\infty(S,P)$ and $\pi^{L^*}\colon L^*\to P$.
Up to this identification, we have $\Psi^{\mathcal L}_g=\Psi^{L^*}_g$, for all $g\in\Diff(P,\xi)$, $\theta^{\mathcal L}=\theta^{L^*}$ and $\omega^{\mathcal L}=\omega^{L^*}$.
Moreover, $\mathcal M=M$ and $J_L^{\mathcal M}=J^M$. 
Clearly, the $\Diff(S)$-action is trivial in this case and $J^{\mathcal M}_R=0$.
\end{remark}

\subsection{A dual pair on the non-linear Stiefel manifold of weighted embeddings}\label{SS:E}

We will now restrict to an open subset of $\mathcal M$ on which the $\Diff(S)$-action is free.
Let 
\begin{equation}\label{eee}
\mathcal E:=\Emb_\lin(|\Lambda|_S^*,L^*)
\end{equation}
denote the open subset of all (fiberwise linear) embeddings in $\mathcal L=C^\infty_\lin(|\Lambda|_S^*,L^*)$.
Elements of $\mathcal E$ are automatically isomorphisms on fibers, so $\mathcal E\subseteq\mathcal M$.
We consider $\mathcal E$ as a non-linear Stiefel manifold of weighted embeddings.
\footnote{Using a volume density $\mu$ on $S$ to identify $\mathcal L\cong C^\infty(S,L^*)$ as in Remark~\ref{R:mutriv}, the subset $\mathcal E$ corresponds to $C^\infty(S,L^*\setminus P)\cap(\pi^{\mathcal L})^{-1}(\Emb(S,P))$. If, moreover, $\xi=\ker\alpha$, we get a further identification $\mathcal E\cong\Emb(S,P)\times C^\infty(S,\mathbb R^\times)$.\label{f1}}

We will denote the restriction to $\mathcal E$ of any action, function, form, or vector field on $\mathcal L$ considered above, by replacing the superscript $\mathcal L$ with $\mathcal E$.
The map $\pi^{\mathcal E}\colon\mathcal E\to\Emb(S,P)$ is a principal fiber bundle with structure group $C^\infty(S,\mathbb R^\times)$, provided we restrict to the connected components of $\Emb(S,P)$ in the image of $\pi^{\mathcal E}$.
Since $\mathcal E$ is open in $\mathcal M$, the symplectic form $\omega^{\mathcal M}$ restricts to a symplectic form $\omega^{\mathcal E}$ on $\mathcal E$.
Hence, $(\mathcal E,\omega^{\mathcal E})$ is a (weakly) symplectic Fr\'echet manifold.

Note that $\mathcal E$ is invariant under the actions of $\Diff(P,\xi)$ and $\Diff(S)$.
In view of Proposition~\ref{P:momentmaps}, the restrictions of $J_L^{\mathcal M}$ and $J_R^{\mathcal M}$ to $\mathcal E$ provide equivariant moment maps
\begin{equation}\label{E:JLRE}
\mathfrak X(P,\xi)^*\xleftarrow{\quad J_L^{\mathcal E}\quad}\mathcal E\xrightarrow{\quad J_R^{\mathcal E}\quad}\Omega^1(S,|\Lambda|_S)\subseteq\mathfrak X(S)^*
\end{equation}
for the actions of $\Diff(P,\xi)$ and $\Diff(S)$ on $\mathcal E$, respectively.

A pair of equivariant moment maps for commuting Hamiltonian actions of (infinite dimensional) Lie groups $G$ and $H$ on an (infinite dimensional) symplectic manifold $Q$,
\begin{equation*}
\mathfrak g^*\xleftarrow{\quad J_L\quad}Q\xrightarrow{\quad J_R\quad}\mathfrak h^*,
\end{equation*}
is called a symplectic dual pair \cite{W83} if the distributions $\ker TJ_L$ and $\ker TJ_R$ are symplectic orthogonal complements of one another: $(\ker TJ_L)^\perp=\ker TJ_R$ and $(\ker TJ_R)^\perp=\ker TJ_L$.
Both identities are needed here, due to the weakness of the symplectic form.
Let $\mathfrak g_Q(x):=\{\zeta_X^Q(x)|X\in\mathfrak g\}$ denote the tangent space to the $G$-orbit at $x\in Q$.
When  
\begin{equation}\label{mutcomort}
\mathfrak g_Q=\mathfrak h_Q^\perp
\qquad\text{and}\qquad
\mathfrak h_Q=\mathfrak g_Q^\perp,
\end{equation} 
i.e., if the $G$-orbits and $H$-orbits are symplectic orthogonal complements of one another, then the actions are said to be mutually completely orthogonal \cite{LM}. 
Since $\ker TJ_R=\mathfrak h_Q^\perp$, the first identity in \eqref{mutcomort} can be rephrased as the transitivity of the $\mathfrak g$-action on level sets of the moment map $J_R$, and similarly for the second identity.

Mutually completely orthogonality of the actions implies that $J_L$ and $J_R$ form a dual pair. 
The reverse implication is not always true,
due to the weakness of the symplectic form \cite{GBV15}.

\begin{theorem}\label{T:dp}
The moment mappings $J_L^{\mathcal E}$ and $J_R^{\mathcal E}$ in \eqref{E:JLRE} form a symplectic dual pair, called the EPContact dual pair. Moreover, the commuting actions of $\Diff(P,\xi)$ and $\Diff(S)$ on $\mathcal E$
are mutually completely orthogonal, i.e., for each $\Phi\in\mathcal E$ we have
\begin{multline}\label{E:dp1}
\left\{\zeta^{\mathcal E}_X(\Phi)\middle|X\in\mathfrak X(P,\xi)\right\}
=\left\{\zeta^{\mathcal E}_Z(\Phi)\middle|Z\in\mathfrak X(S)\right\}^\perp
\\:=\left\{A\in T_\Phi\mathcal E\middle|\forall Z\in\mathfrak X(S):\omega^{\mathcal E}_\Phi(A,\zeta^{\mathcal E}_Z(\Phi))=0\right\}
\end{multline}
as well as
\begin{multline}\label{E:dp2}
\left\{\zeta^{\mathcal E}_Z(\Phi)\middle|Z\in\mathfrak X(S)\right\}
=\left\{\zeta^{\mathcal E}_X(\Phi)\middle|X\in\mathfrak X(P,\xi)\right\}^\perp
\\:=\left\{B\in T_\Phi\mathcal E\middle|\forall X\in\mathfrak X(P,\xi):\omega^{\mathcal E}_\Phi(\zeta^{\mathcal E}_X(\Phi),B)=0\right\}.
\end{multline}
\end{theorem}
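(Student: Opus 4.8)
The strategy is to work fiberwise over the canonical $1$-form $\theta^{L^*}$, reducing everything to a pointwise statement on $L^*\setminus P$, and then to integrate. I would first observe that both inclusions ``$\subseteq$'' in \eqref{E:dp1} and \eqref{E:dp2} are automatic: by \eqref{E:bracketM} we have $\omega^{\mathcal E}(\zeta^{\mathcal E}_X,\zeta^{\mathcal E}_Z)=\zeta^{\mathcal E}_X\cdot k^{\mathcal E}_Z$, and since $k^{\mathcal E}_Z\circ\psi^{\mathcal E}_g=k^{\mathcal E}_Z$ (the contact action preserves $\theta^{\mathcal E}$ and commutes with the $\Diff(S)$-action), this vanishes; equivalently, $[\zeta^{\mathcal E}_X,\zeta^{\mathcal E}_Z]=0$ together with invariance of $\omega^{\mathcal E}$ gives the symplectic orthogonality of the two orbit directions. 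So the content is the reverse inclusions, i.e.\ the \emph{transitivity} statements: every $A\in T_\Phi\mathcal E$ with $\omega^{\mathcal E}_\Phi(A,\zeta^{\mathcal E}_Z(\Phi))=0$ for all $Z$ is tangent to the $\Diff(P,\xi)$-orbit, and symmetrically.

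For the first reverse inclusion \eqref{E:dp1}, fix $\Phi\in\mathcal E$ with underlying embedding $\varphi\colon S\to P$ and image submanifold $N=\varphi(S)$. The key identity is that, since $\Phi$ is a fiberwise-linear embedding, the vertical $1$-form $\Phi^*\theta^{L^*}$ on $|\Lambda|_S^*$ equals $J_R^{\mathcal L}(\Phi)$, which is nowhere zero on an open dense set precisely when $\varphi$ is an \emph{isotropic} embedding fails --- more usefully, the pairing $\omega^{\mathcal E}_\Phi(A,\zeta^{\mathcal E}_Z(\Phi))=\int_S(\text{something pointwise in }A,Z)$ can be written, using \eqref{E:omegaM}, \eqref{E:zetaXM} and \eqref{E:thetaLs}, as an integral over $S$ of $\big(A\lrcorner\,d\theta^{L^*}\big)$ contracted against $\zeta^{|\Lambda|_S^*}_Z$; the condition that this vanishes for \emph{all} $Z$ forces, by nondegeneracy of the fiber pairing and the fundamental lemma of the calculus of variations, a pointwise condition saying that $A$, viewed as a section of $\Phi^*TL^*$, is ``vertical with respect to the characteristic foliation'' of $\omega^{L^*}$ transverse to $N$. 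Concretely, I expect to show $A(s)\in T_{\Phi(s)}(\text{$\Diff(P,\xi)$-orbit through }\Phi)$ for each $s$ by noting that the orbit tangent space $\{\zeta^{L^*}_X\circ\Phi\mid X\in\mathfrak X(P,\xi)\}$ is, fiberwise over $N$, the full symplectic orthogonal inside $T L^*|_{L^*|_N}$ of the ``$\varphi$-tangential'' directions $T\Phi\circ\zeta^{|\Lambda|_S^*}_Z$ --- this is where one uses that $\mathfrak X(P,\xi)=C^\infty_\lin(L^*)$ separates points and covectors on $L^*\setminus P$, together with the homogeneity ($1$-homogeneous) of $\omega^{L^*}$. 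Lifting the pointwise statement back to a global vector field $X\in\mathfrak X(P,\xi)$ with $\zeta^{\mathcal E}_X(\Phi)=A$ uses that $N$ is an embedded submanifold, so a smooth section of $L|_N$ (equivalently a contact vector field along $N$) extends to a global contact vector field; the extension exists because $\mathfrak X(P,\xi)=\Gamma^\infty(L)$ and $L|_N$-sections extend.

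For \eqref{E:dp2}, the argument runs along the same lines but is easier: given $B\in T_\Phi\mathcal E$ with $\omega^{\mathcal E}_\Phi(\zeta^{\mathcal E}_X(\Phi),B)=0$ for all $X$, use $\omega^{\mathcal E}(\zeta^{\mathcal E}_X,B)=-dh^{\mathcal E}_X(B)$ from \eqref{E:dhXM} to conclude $dh^{\mathcal E}_X(B)=0$ for all $X$, i.e.\ $B$ is tangent to the common level set of all the functions $h^{\mathcal E}_X=\langle J_L^{\mathcal E}(\cdot),X\rangle$, hence $B\in\ker T_\Phi J_L^{\mathcal E}$. One must then identify $\ker T_\Phi J_L^{\mathcal E}$ with the $\Diff(S)$-orbit tangent space $\{\zeta^{\mathcal E}_Z(\Phi)\mid Z\in\mathfrak X(S)\}$. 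Since $J_L^{\mathcal L}(\Phi)=\varphi_*\mu$ (in the trivialization of Remark~\ref{R:mutriv2}, see \eqref{E:JLCSL}) is the pushforward of a volume density along the embedding $\varphi$, its kernel consists exactly of infinitesimal variations of $\Phi$ that do not move the pair (image submanifold, pushed-forward density) --- equivalently, reparametrizations of $S$ --- and this is precisely the transitivity of $\Diff(S)$ on fibers of $J_L^{\mathcal E}$ asserted in Proposition~\ref{jleg}, which the excerpt permits me to assume; so this direction essentially reduces to quoting that proposition (or its infinitesimal version).

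\textbf{Main obstacle.} The delicate point is the pointwise-to-global step in \eqref{E:dp1}: turning ``$A$ satisfies the orthogonality condition against every $Z\in\mathfrak X(S)$'' into ``$A=\zeta^{\mathcal E}_X(\Phi)$ for a genuine contact vector field $X$ on all of $P$.'' The integral-vanishing condition only constrains $A$ along $N$, and one must (i) check that the pointwise constraint along $N$ is exactly ``$A$ lies in the fiberwise span of $\zeta^{L^*}_X\circ\Phi$,'' which requires the fiberwise surjectivity of $\mathfrak X(P,\xi)\to$ (sections of $\Phi^*TL^*$ modulo $\varphi$-tangential directions) --- this is the infinitesimal transitivity of the contact group, a non-formal input --- and (ii) promote the along-$N$ contact vector field to a global one by an extension argument, harmless since $N$ is embedded and $L$ is a genuine vector bundle on $P$. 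I expect (i) to be the crux and to be handled by the same local computation that underlies the classical fact that $\Diff_c(P,\xi)$ acts locally transitively on $M=L^*\setminus P$, already recorded in Section~\ref{SS:P}, applied now pointwise/in families along $N$.
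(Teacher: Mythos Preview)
Your easy inclusions are fine. The substantive gaps are in both reverse inclusions.

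\textbf{For \eqref{E:dp2}.} You want to quote Proposition~\ref{jleg} to identify $\ker T_\Phi J_L^{\mathcal E}$ with the $\Diff(S)$-orbit tangent space. This is circular: the proof of Proposition~\ref{jleg}(a) explicitly invokes \eqref{E:dp2} to establish that $J_L^{\mathcal G}$ is an immersion, and immersiveness of $J_L^{\mathcal G}$ is exactly the infinitesimal statement you need. Part~(c) of that proposition (group-level transitivity on level sets) is indeed proved without \eqref{E:dp2}, but in the Fr\'echet setting group-level transitivity on a level set does not give you that $\ker T_\Phi J_L^{\mathcal E}$ equals the orbit tangent space. The paper argues directly instead: from $\int_S(dh\circ\Phi)(B)=0$ for all $h\in C^\infty_\lin(L^*)$ one first deduces that $B$ is tangent to the embedded image $\tilde N:=\Phi(|\Lambda|_S^*)$ (by taking $h$ vanishing along $\tilde N$ with prescribed normal derivative), writes $B=T\Phi\circ W$ for a homogeneous vector field $W$ on $|\Lambda|_S^*$, and then an integration-by-parts identity (Lemma~\ref{L:Z}) forces $W=\zeta_Z^{|\Lambda|_S^*}$ for some $Z\in\mathfrak X(S)$.

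\textbf{For \eqref{E:dp1}.} The expectation that the integral orthogonality yields a \emph{pointwise} condition via the fundamental lemma is where the plan goes wrong. The fundamental vector field $\zeta_Z^{|\Lambda|_S^*}$ carries a vertical divergence piece coupled to $Z$, so the vanishing of $\int_S\beta\bigl(\zeta_Z^{|\Lambda|_S^*}\bigr)$ for all $Z$, where $\beta:=\Phi^*i_A\omega^{L^*}$, does not decouple into independent pointwise constraints. After one integration by parts (Lemma~\ref{L:div}) the actual conclusion is that $\beta$ is \emph{exact}: $\beta=du$ for a fiberwise-linear $u$ on $|\Lambda|_S^*$. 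This exactness is precisely what makes the extension step work: one then builds $h\in C^\infty_\lin(L^*)$ with $h\circ\Phi=u$ and $dh\circ\Phi=i_A\omega^{L^*}$ (here the embedding hypothesis on $\Phi$ is used), and the contact vector field $X$ with $h_X=-h$ satisfies $A=\zeta_X^{\mathcal E}(\Phi)$. Your pointwise picture cannot see this, because at each point of $M=L^*\setminus P$ the $\Diff(P,\xi)$-action is already infinitesimally transitive, so ``$A(s)$ lies in the orbit tangent space at $\Phi(s)$'' is vacuously true for \emph{every} $A$; the genuine obstruction to writing $A=\zeta_X^{L^*}\circ\Phi$ with a \emph{single} $X$ is a coherence condition along $N$, and that is exactly the exactness of $\beta$.
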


\begin{proof}
Suppose $\Phi\in\mathcal E$.
The inclusion
$$
\left\{\zeta^{\mathcal E}_X(\Phi)\middle|X\in\mathfrak X(P,\xi)\right\}
\subseteq\left\{\zeta^{\mathcal E}_Z(\Phi)\middle|Z\in\mathfrak X(S)\right\}^\perp
$$
follows immediately from \eqref{E:PghXM} and \eqref{E:dhXM}.
To show the converse inclusion, suppose $A\in\left\{\zeta^{\mathcal E}_Z(\Phi)\middle|Z\in\mathfrak X(S)\right\}^\perp$.
The $1$-form $\beta:=\Phi^*i_A\omega^{L^*}\in\Omega^1(|\Lambda|_S^*)$, given by $\beta(V)=\omega^{L^*}_{\Phi(y)}(A(y),T_y\Phi(V))$ for all $V\in T_y|\Lambda|_S^*$, satisfies $\bigl(\delta_t^{|\Lambda|_S^*}\bigr)^*\beta=t\beta$, by homogeneity of $\Phi$, $A$, and $\omega^{L^*}$.
Thus, for all $Z\in\mathfrak X(S)$,
$$
0=\omega^{\mathcal E}(A,\zeta^{\mathcal E}_Z(\Phi))
\stackrel{\eqref{E:omegaM}}{=}\int_S\omega^{L^*}\bigl(A,T\Phi\circ\zeta_Z^{|\Lambda|_S^*}\bigr){=}\int_S\beta\bigl(\zeta_Z^{|\Lambda|_S^*}\bigr),
$$
where the integrands are fiberwise linear functions on the total space of $|\Lambda|_S^*$, which may be regarded as sections of $|\Lambda|_S$ and integrated over $S$.
By Lemma~\ref{L:div} below, there exists a fiberwise linear function $u\in C^\infty_\lin(|\Lambda|_S^*)$ such that $\beta=du$.

Because $\Phi$ is a fiberwise linear embedding, one can construct  $h\in C^\infty_\lin(L^*)$,
i.e.  $h\circ\delta_t^{L^*}=th$ for all $t\in\mathbb R$, such that $ h\circ\Phi=u$ and $dh\circ\Phi=i_A\omega^{L^*}$.
Indeed, let $\tilde u\in C^\infty_\lin(L^*)$ be any fiberwise linear function with  $\tilde u\circ\Phi=u$ and write $h=\tilde u+h'$.
Hence, it suffices to construct $h'\in C^\infty_\lin(L^*)$ which vanishes along $\Phi$ and has prescribed derivative $dh'\circ\Phi=i_A\omega^{L^*}-(d\tilde u)\circ\Phi$ along $\Phi$.
This is possible, since $\Phi^*(i_A\omega^{L^*})-\Phi^*(d\tilde u)=\beta-du=0$.

According to the identification \eqref{E:XPxi}, there exists a contact vector field $X\in\mathfrak X(P,\xi)$ such that $h=-h_X$, hence
$$i_A\omega^{L^*}=-dh_X\circ\Phi
\stackrel{\eqref{E:dhX}}{=}i_{\zeta_X^{L^*}\circ\Phi}\omega^{L^*}.$$
Since $\omega^{L^*}$ is non-degenerate over $L^*\setminus P$, we conclude $A=\zeta^{L^*}_X\circ\Phi$, and using \eqref{E:zetaXM} we get $A=\zeta^{\mathcal E}_X(\Phi)$, whence \eqref{E:dp1}.

It remains to check the other equality \eqref{E:dp2}.
The inclusion
$$
\left\{\zeta^{\mathcal E}_Z(\Phi)\middle|Z\in\mathfrak X(S)\right\}
\subseteq\left\{\zeta^{\mathcal E}_X(\Phi)\middle|X\in\mathfrak X(P,\xi)\right\}^\perp
$$
follows immediately from \eqref{E:PgkZM} and \eqref{E:dkZM}, or \eqref{E:dp1}.
To show the converse inclusion, suppose that $B\in\left\{\zeta^{\mathcal E}_X(\Phi)\middle|X\in\mathfrak X(P,\xi)\right\}^\perp$.
Hence, for all $X\in\mathfrak X(P,\xi)$,
$$
0=\omega^{\mathcal E}(\zeta^{\mathcal E}_X(\Phi),B)
\stackrel{\eqref{E:zetaXM}}{=}\omega^{\mathcal E}(\zeta^{L^*}_X\circ\Phi,B)
\stackrel{\eqref{E:omegaM}}{=}\int_S\omega^{L^*}(\zeta_X^{L^*}\circ\Phi,B)
\stackrel{\eqref{E:dhX}}{=}-\int_S(dh_X\circ\Phi)(B),
$$
and thus $\int_S(dh\circ\Phi)(B)=0$, for all $h\in C^\infty_\lin(L^*)$, cf.\ \eqref{E:XPxi}.
This implies that $B$ is tangential to $\tilde N:=\Phi(|\Lambda|^*_S)$.
To see this, consider $\gamma\colon|\Lambda|_S^*\to\ann(T\tilde N)\subseteq T^*L^*$ satisfying $\pi^{T^*L^*}\circ\gamma=\Phi$ and $(T\delta_t^{L^*})^*\circ\gamma\circ\delta_t^{|\Lambda|_S^*}=\gamma$ for all $t$.
Since $\Phi$ is a fiberwise linear embedding, there exists $h\in C^\infty_\lin(L^*)$ with $h\circ\Phi=0$ and $\gamma=dh\circ\Phi$, hence $\int_S\gamma(B)=0$ for all such $\gamma$.
We conclude that $B$ is tangential to $\tilde N$.
Consequently, there exists a vector field $W$ on the total space of $|\Lambda|_S^*$ such that $B=T\Phi\circ W$.
Clearly, $\delta_t^*W=W$, for all $t\in\mathbb R$.
Using Lemma~\ref{L:Z} below, we conclude that there exists $Z\in\mathfrak X(S)$ such that $W=\zeta_Z^{|\Lambda|_S^*}$.
In view of \eqref{E:zetaXM}, we obtain $B=\zeta^{\mathcal E}_Z(\Phi)$.
This completes the proof of \eqref{E:dp2}.
\end{proof}

\begin{lemma}\label{L:div}
Suppose $\beta\in\Omega^1(|\Lambda|_S^*)$ is a 1-form on the total space of $|\Lambda|_S^*$, such that $\delta_t^*\beta=t\beta$ for all $t\in\mathbb R$ and 
\begin{equation}\label{E:intSbeta}
\int_S\beta\left(\zeta_Z^{|\Lambda|_S^*}\right)=0
\end{equation}
for all $Z\in\mathfrak X(S)$.
\footnote{Note that the integrand is a fiberwise linear function on the total space of $|\Lambda|_S^*$, which may be regarded as a section of $|\Lambda|_S$ and integrated over $S$.} 
Then $\beta=di_R\beta$ where $R=\frac\partial{\partial t}|_{t=1}\delta_t\in\mathfrak X(|\Lambda|_S^*)$ denotes the radial vector field, i.e., the fundamental vector field of the action $\delta_t$.
\end{lemma}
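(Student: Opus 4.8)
The plan is to write $\beta$ via Cartan's magic formula as $di_R\beta+i_Rd\beta$ and to show that the hypothesis forces the second summand to vanish. From $\delta_{ts}=\delta_t\circ\delta_s$ one sees that $\tau\mapsto\delta_{e^\tau}$ is the flow of $R$, so the homogeneity $\delta_t^*\beta=t\beta$ gives $L_R\beta=\tfrac{d}{d\tau}\big|_{\tau=0}\delta_{e^\tau}^*\beta=\beta$; hence $\beta=L_R\beta=di_R\beta+i_Rd\beta$, and the lemma is equivalent to $i_Rd\beta=0$. Note along the way that $i_R\beta$ is homogeneous of degree one, i.e.\ $i_R\beta\in C^\infty_\lin(|\Lambda|_S^*)=\Gamma^\infty(|\Lambda|_S)$, since $R$ is $\delta_t$-invariant.

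Since $|\Lambda|_S$ admits a global volume density $\mu_0$, I would next trivialise $|\Lambda|_S^*\cong S\times\mathbb R$ with fibre coordinate $w$, so that $\delta_t(x,w)=(x,tw)$, $R=w\,\partial_w$, and $\zeta_Z^{|\Lambda|_S^*}=Z+w\,\ddiv_{\mu_0}(Z)\,\partial_w$ for $Z\in\mathfrak X(S)$ (compare the second formula in \eqref{E:zetaCSM}). The homogeneity $\delta_t^*\beta=t\beta$ forces $\beta=w\,\alpha+b\,dw$ for some $\alpha\in\Omega^1(S)$ and $b\in C^\infty(S)$ (pulled back to the total space). Then $i_R\beta=bw$, $di_R\beta=w\,db+b\,dw$, and $i_Rd\beta=\beta-di_R\beta=w(\alpha-db)$; in particular $i_Rd\beta$ is semibasic, hence corresponds, via the identification \eqref{E:Omega1Lambda}, to the $1$-form density $\alpha-db\in\Omega^1(S,|\Lambda|_S)$.

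It remains to compute the hypothesis. With $\beta=w\,\alpha+b\,dw$ and the formula for $\zeta_Z^{|\Lambda|_S^*}$ above, $\beta(\zeta_Z^{|\Lambda|_S^*})=w\bigl(\alpha(Z)+b\,\ddiv_{\mu_0}(Z)\bigr)$, which as a density on $S$ is $\alpha(Z)\,\mu_0+b\,L_Z\mu_0=(\alpha-db)(Z)\,\mu_0+L_Z(b\,\mu_0)$. Integrating over $S$ and using $\int_SL_Z(b\,\mu_0)=0$ (Stokes on the closed manifold $S$), the hypothesis \eqref{E:intSbeta} becomes $\int_S(\alpha-db)(Z)\,\mu_0=0$ for all $Z\in\mathfrak X(S)$. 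Since the pairing $\Omega^1(S)\times\mathfrak X(S)\to\mathbb R$, $(\sigma,Z)\mapsto\int_S\sigma(Z)\,\mu_0$, is non-degenerate, we get $\alpha=db$, hence $\beta=w\,db+b\,dw=d(bw)=di_R\beta$, as claimed. (Conceptually, the cancellation of the $L_Z(b\,\mu_0)$-term is exactly the identity $\int_S(di_R\beta)(\zeta_Z^{|\Lambda|_S^*})=\int_S\zeta_Z^{|\Lambda|_S^*}\cdot(i_R\beta)=0$, which in turn is the $\Diff(S)$-invariance of integration of densities applied to $i_R\beta\in\Gamma^\infty(|\Lambda|_S)$.)

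The only step carrying real content is this last cancellation: without the precise matching between the vertical component $w\,\ddiv_{\mu_0}(Z)\,\partial_w$ of $\zeta_Z^{|\Lambda|_S^*}$ and the divergence of $Z$ — i.e.\ an integration by parts on the closed manifold $S$, equivalently the invariance of integration — the error term $i_Rd\beta$ in Cartan's formula would survive. Everything else is homogeneity bookkeeping; the behaviour along the zero section of $|\Lambda|_S^*$ needs no separate treatment, since the normal form $\beta=w\,\alpha+b\,dw$ holds there by smoothness.
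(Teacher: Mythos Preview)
Your proof is correct and follows essentially the same approach as the paper: both trivialise $|\Lambda|_S^*\cong S\times\mathbb R$ via a volume density, use homogeneity to write $\beta=w\alpha+b\,dw$, compute $\beta(\zeta_Z^{|\Lambda|_S^*})$ using the explicit form of $\zeta_Z^{|\Lambda|_S^*}$, integrate by parts on the closed manifold $S$, and conclude $\alpha=db$. Your opening via Cartan's formula, reducing the claim to $i_Rd\beta=0$, is a pleasant conceptual addition not present in the paper, but the substantive computation is the same.
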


\begin{proof}
We fix a volume density $\mu$ on $S$ and identify $|\Lambda|_S^*\cong S\times\mathbb R$ correspondingly.
The two canonical projections shall be denoted by $p\colon S\times\mathbb R\to S$ and $t\colon S\times\mathbb R\to\mathbb R$, respectively.
The radial vector field becomes $R=t\partial_t\in\mathfrak X(S\times\mathbb R)$.
By homogeneity, $\beta\in\Omega^1(S\times\mathbb R)$ can be written in the form $\beta=tp^*B+(p^*b)dt$ where $B\in\Omega^1(S)$ and $b\in C^\infty(S,\mathbb R)$.
Moreover, for $Z\in\mathfrak X(S)$, we have 
\begin{equation}\label{E:zetaZL}
\zeta_Z^{|\Lambda|_S^*}=p^*Z+(p^*\ddiv(Z))t\partial_t,
\end{equation} 
where $L_Z\mu=:\ddiv(Z)\mu$ and $p^*Z\in\mathfrak X(S\times\mathbb R)$ denotes the vector field which projects to $Z$ on $S$ and $0$ on $\mathbb R$.
Consequently, 
$$
\beta\left(\zeta_Z^{|\Lambda|_S^*}\right)=tp^*\bigl(i_ZB+b\ddiv(Z)\bigr).
$$
Using Stokes' theorem, we obtain
$$
\int_S\beta\left(\zeta_Z^{|\Lambda|_S^*}\right)
=\int_S\bigl(i_ZB+b\ddiv(Z)\bigr)\mu
=\int_S(B-db)\wedge i_Z\mu.
$$
In view of the assumption \eqref{E:intSbeta}, we conclude that $B=db$, whence $di_R\beta=d(tp^*b)=tp^*db+(p^*b)dt=tp^*B+(p^*b)dt=\beta$, the desired relation.
\end{proof}

\begin{lemma}\label{L:Z}
Suppose $W$ is a vector field on the total space of $|\Lambda|_S^*$, such that $\delta_t^*W=W$ for all $t\in\mathbb R$ and such that 
\begin{equation}\label{E:intSdhZ}
\int_Sdh(W)=0,
\end{equation}
for all smooth, fiberwise linear functions $h$ on the total space of $|\Lambda|_S^*$.
\footnote{Note that the integrand is a fiberwise linear function on the total space of $|\Lambda|_S^*$, which can be regarded as a section of $|\Lambda|_S$ and integrated over $S$.} 
Then $W$ is a fundamental vector field of the natural $\Diff(S)$ action on $|\Lambda|_S^*$, i.e., there exists $Z\in\mathfrak X(S)$ such that $W=\zeta^{|\Lambda|_S^*}_Z$.
\end{lemma}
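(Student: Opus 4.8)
The plan is to work in a local trivialization $|\Lambda|_S^*\cong S\times\mathbb{R}$ provided by a choice of volume density $\mu$ on $S$, exactly as in the proof of Lemma~\ref{L:div}, and decompose the homogeneity-invariant vector field $W$ into its component along the fibers and its component along $S$. Since $\delta_t^*W=W$, writing $R=t\partial_t$ for the radial vector field, the fiber component of $W$ must be of the form $(p^*c)\,t\partial_t$ for some $c\in C^\infty(S,\mathbb{R})$, while the horizontal component is $p^*V$ for some time-independent vector field $V\in\mathfrak{X}(S)$; that is, $W=p^*V+(p^*c)\,t\partial_t$. Comparing with \eqref{E:zetaZL}, the claim $W=\zeta_Z^{|\Lambda|_S^*}$ for some $Z\in\mathfrak{X}(S)$ will hold precisely when $V=Z$ and $c=\ddiv_\mu(Z)$; so the real content is to show that the hypothesis \eqref{E:intSdhZ} forces $c=\ddiv_\mu(V)$.

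To extract this, I would feed suitable fiberwise linear functions $h$ into the hypothesis. A fiberwise linear function on $S\times\mathbb{R}$ is $h=t\,p^*a$ for $a\in C^\infty(S,\mathbb{R})$ (using the trivialization; invariantly, $h\in C^\infty_\lin(|\Lambda|_S^*)$ corresponds to a section of $|\Lambda|_S$, i.e.\ to $a\mu$ with $a\in C^\infty(S)$). Then $dh=t\,p^*(da)+(p^*a)\,dt$, and evaluating on $W=p^*V+(p^*c)\,t\partial_t$ gives $dh(W)=t\,p^*\bigl((i_V\,da)+ac\bigr)$. Regarded as a section of $|\Lambda|_S$ and integrated, this is $\int_S\bigl(i_V\,da+ac\bigr)\mu=\int_S\bigl(da\wedge i_V\mu\bigr)+\int_S ac\,\mu$; applying Stokes' theorem to the first term, $\int_S da\wedge i_V\mu=-\int_S a\,d(i_V\mu)=-\int_S a\,\ddiv_\mu(V)\,\mu$. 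Thus the hypothesis \eqref{E:intSdhZ} reads $\int_S a\bigl(c-\ddiv_\mu(V)\bigr)\mu=0$ for all $a\in C^\infty(S)$, which forces $c=\ddiv_\mu(V)$ since $\mu$ is nowhere vanishing. Setting $Z:=V$ and invoking \eqref{E:zetaZL} then yields $W=\zeta_Z^{|\Lambda|_S^*}$, as desired.

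I do not expect a serious obstacle here; the lemma is the vector-field analogue of Lemma~\ref{L:div} and its proof is essentially the same Stokes-theorem computation run with test functions instead of test vector fields. The one point that deserves a sentence of care is the identification of fiberwise linear functions on $|\Lambda|_S^*$ with sections of $|\Lambda|_S$, and the observation (flagged in the footnote to the statement) that $dh(W)$ is itself fiberwise linear and hence integrable over $S$ — this is exactly what makes the pairing $\int_S dh(W)$ well defined independently of the trivialization, so that the conclusion, which is trivialization-free, indeed follows from the trivialization-dependent computation. Finally I would remark that $Z$ is uniquely determined by $W$ (it is the projection of $W$ under $T\pi^{|\Lambda|_S^*}$), so the statement could equally be phrased as: $W$ is $\delta_t$-invariant and satisfies \eqref{E:intSdhZ} if and only if it is the fundamental vector field of $\pi^{|\Lambda|_S^*}_*W$.
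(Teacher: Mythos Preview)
Your proof is correct and essentially identical to the paper's own proof: both fix a volume density to trivialize $|\Lambda|_S^*\cong S\times\mathbb R$, decompose the $\delta_t$-invariant $W$ as $p^*V+(p^*c)\,t\partial_t$, test against $h=t\,p^*a$, and use Stokes' theorem to conclude $c=\ddiv_\mu(V)$. The only differences are notational (the paper writes $Z,w,\bar h$ where you write $V,c,a$) and in how the integration-by-parts step is phrased.
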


\begin{proof}
As in the proof of the preceding lemma we fix a volume density $\mu$ on $S$, we identify $|\Lambda|_S^*\cong S\times\mathbb R$ correspondingly, and we denote the two canonical projections  by $p\colon S\times\mathbb R\to S$ and $t\colon S\times\mathbb R\to\mathbb R$.
Hence, the vector field $W$ can be written in the form $W=p^*Z+(p^*w)t\partial_t$ where $Z\in\mathfrak X(S)$ and $w\in C^\infty(S)$.
Every function $\bar h\in C^\infty(S)$ gives rise to a fiberwise linear function $h:=tp^*\bar h$ on the total space of $|\Lambda|_S^*$.
Then 
$$
dh(W)=tp^*(\bar hw+d\bar h(Z))
$$ 
and Stokes' theorem yields
$$
\int_Sdh(W)=\int_S\bigl(\bar h w+d\bar h(Z)\bigr)\mu=\int_S\bar h\bigl(w-\ddiv(Z)\bigr)\mu.
$$
Using the assumption \eqref{E:intSdhZ}, we conclude that $w=\ddiv(Z)$.
Consequently, see \eqref{E:zetaZL}, we obtain $W=p^*Z+(p^*w)t\partial_t=p^*Z+(p^*\ddiv(Z))t\partial_t=\zeta^{|\Lambda|_S^*}_Z$.
\end{proof}

\begin{remark}\label{R:JLRalpha}
Let us give a more explicit description of the EPContact dual pair if the contact structure is described by a contact form, $\xi=\ker\alpha$, and a volume density $\mu$ on $S$ has been fixed.
We have already pointed out before, see footnote~\ref{f1}, that these choices provide an identification of the non-linear Stiefel manifold  $\mathcal E$ with $\Emb(S,P)\times C^\infty(S,\mathbb R^\times)$.
Via this identification, the actions of $\Diff(P,\xi)$ from the left and $\Diff(S)$ from the right are
$$
\Psi^{\mathcal E}_g(\varphi,h)=(g\circ\varphi,(\tfrac{g^*\alpha}\alpha\circ\varphi)h)
\qquad\text{and}\qquad 
\psi^{\mathcal E}_f(\varphi,h)=(\varphi\circ f,(h\circ f)\tfrac{f^*\mu}{\mu}),
$$
where $g\in\Diff(P,\xi)$, $f\in\Diff(S)$, and $(\varphi,h)\in\Emb(S,P)\times C^\infty(S,\mathbb R^\times)$.
Using the identification $\mathfrak X(P,\xi)=C^\infty(P)$ provided by the contact form $\alpha$, the EPContact dual pair \eqref{E:JLRE} becomes
\begin{equation}\label{E:JLREchoices}
C^\infty(P)^*\xleftarrow{\quad J_L^{\mathcal E}\quad}\Emb(S,P)\times C^\infty(S,\mathbb R^\times)\xrightarrow{\quad J_R^{\mathcal E}\quad}\Omega^1(S,|\Lambda|_S)\subseteq\mathfrak X(S)^*
\end{equation}
with moment maps
\begin{equation}\label{E:Jalpha}
J_L^{\mathcal E}(\varphi,h)=\varphi_*(h\mu)
\qquad\text{and}\qquad
J_R^{\mathcal E}(\varphi,h)=\varphi^*\alpha\otimes h\mu.
\end{equation}
This follows readily from the formulas provided in Remarks~\ref{R:mutriv} and \ref{R:mutriv2}.
\end{remark}

In view of Theorem~\ref{T:dp} one might expect \cite{BW,GBV17} that the contact group acts locally transitive on the level sets of $J_R^{\mathcal E}$.
This is indeed the case, see Theorem~\ref{T:Erho} in the subsequent section.
Moreover, one might expect that a coadjoint orbit $\mathcal O\subseteq\mathfrak X(S)^*$ gives rise to a reduced symplectic structure on the quotient $(J_R^{\mathcal E})^{-1}(\mathcal O)/\Diff(S)$ which is equivariantly symplectomorphic to a coadjoint orbit of $\Diff_c(P,\xi)$ via the symplectomorphism induced by the moment map $J_L^{\mathcal E}$.
Below we will see that this can be made rigorous for coadjoint orbits corresponding to isotropic embeddings, see Theorem~\ref{T:Giso}.

\section{Level sets of the right moment map}\label{S:hs}

In this section we will show that each level set of the right moment map 
$$
J_R^{\mathcal E}\colon\mathcal E\to\Omega^1(S,|\Lambda|_S)\subseteq\mathfrak X(S)^*
$$ 
is a smooth splitting Fr\'echet submanifold in $\mathcal E$.
Furthermore, we will see that the contact group acts locally transitive on each level set.
More precisely, we will show that this action admits local smooth sections.
Hence, (unions of) connected components of these level sets may be regarded as homogeneous spaces of the contact group.
These results are summarized in Theorem~\ref{T:Erho} below.

A similar transitivity statement has been established in \cite[Proposition~5.5]{GBV12} using methods quite different from the approach presented here.

Let $\pi^{J^1L}\colon J^1L\to P$ denote the 1-jet bundle of sections of $L$.
Recall that each section $h\in\Gamma^\infty(L)$ gives rise to a section $j^1h\in\Gamma^\infty(J^1L)$.
We equip the total space of $J^1L$ with the contact structure uniquely characterized by the following property: 
A section $s\in\Gamma^\infty(J^1L)$ has isotropic image iff there exists $h\in\Gamma^\infty(L)$ such that $s=j^1h$.
\footnote{If $L\cong P\times\mathbb R$ is a trivialization of $L$, then $J^1L\cong T^*P\times\mathbb R$, and the contact structure can be described by the contact form $p^*\theta-dt$, where $\theta$ denotes the canonical 1-form on $T^*P$,
while $p\colon T^*P\times\mathbb R\to T^*P$ and $t\colon T^*P\times\mathbb R\to\mathbb R$ denote the canonical projections.}
In this case $h=\pi^{J^1L}_L\circ s$, where $\pi^{J^1L}_L\colon J^1L\to L$ denotes the natural projection.

Consider the line bundle $p\colon\hom(p_1^*L,p_2^*L)\to P\times P$ where $p_1,p_2\colon P\times P\to P$ denote the two canonical projections.
We let $\mathcal P:=\isomm(p_1^*L,p_2^*L)$ denote the open subset of fiberwise invertible maps.
We equip the total space of $\mathcal P$ with the contact structure
\begin{equation}\label{E:calPxi}
\xi^{\mathcal P}_a:=\left\{A\in T_a\mathcal P\middle|a\left((T_a(p_1\circ p)A)\textrm{ mod }\xi_{(p_1\circ p)(a)}\right)=(T_a(p_2\circ p)A)\textrm{ mod }\xi_{(p_2\circ p)(a)}\right\}
\end{equation}
where $a\in\mathcal P$.
\footnote{If $\xi=\ker\alpha$, and $\mathcal P\cong P\times P\times(\mathbb R^\times)$ denotes the corresponding trivialization, then the contact structure can be described by the contact form $tp_1^*\alpha-p_2^*\alpha$ on $P\times P\times(\mathbb R^\times)$.}
Note that a diffeomorphism $g\in\Diff(P)$ is contact if and only if there exists a smooth map $a\colon P\to\mathcal P$ with isotropic image satisfying $p_1\circ p\circ a=\id$ and $p_2\circ p\circ a=g$.
Moreover, in this case $\Psi^L_{g,x}=a(x)$ in $\hom(L_x,L_{g(x)})$, for all $x\in P$. 
Here $\Psi^L_{g,x}$ denotes the restriction of $\Psi^L_g$ to the fiber $L_x$.

It is well known \cite[Theorem~1]{L98} that there exists a contact diffeomorphism
\begin{equation}\label{E:Xi}
J^1L\supseteq V\xrightarrow{\,\,\Xi\,\,}U\subseteq\mathcal P
\end{equation}
from an open neighborhood $V$ of the zero section $P\subseteq J^1L$ onto an open neighborhood $U$ of the diagonal $P\subseteq\mathcal P$ intertwining the contact structure obtained by restriction from $J^1L$ with the contact structure obtained by restriction from $\mathcal P$.
Moreover, for all $x\in P$, we have
\begin{equation}\label{E:Xizero}
\Xi(0_x)=\id_{L_x}.
\end{equation}

It is also well known, see \cite[Theorem~43.19]{KM97} for the coorientable case, that the map
\begin{equation}\label{E:chart}
\Gamma_c^\infty(L)\supseteq\mathcal W\xrightarrow{F}\Diff_c(P,\xi),
\qquad F(h):=p_2\circ p\circ\Xi\circ j^1h\circ\bigl(p_1\circ p\circ\Xi\circ j^1h\bigr)^{-1},
\end{equation}
provides a chart for the Lie group $\Diff_c(P,\xi)$ at the identity.
Here $\mathcal W$ is a $C^\infty$-open neighborhood of zero such that, for each $h\in\mathcal W$, the image of $j^1h$ is contained in $V$ and $p_1\circ p\circ\Xi\circ j^1h$ as well as $p_2\circ p\circ\Xi\circ j^1h$ are diffeomorphisms of $P$.
Clearly, $F(0)=\id_P$, see \eqref{E:Xizero}.
Moreover, for $h\in\mathcal W$ and $x\in P$, we have
\begin{equation}\label{E:PsiLFh}
\Psi^L_{F(h),x}=\Bigl(\Xi\circ j^1h\circ\bigl(p_1\circ p\circ\Xi\circ j^1h\bigr)^{-1}\Bigr)(x)
\end{equation}
in $\hom(L_x,L_{F(h)(x)})$.
In particular, 
\begin{equation}\label{E:chartfix}
\textrm{$j^1_xh=0\quad\Leftrightarrow\quad F(h)(x)=x$ and $\Psi^L_{F(h),x}=\id_{L_x}$.}
\end{equation}

\begin{lemma}\label{L:isotropy}
For $\Phi\in\mathcal E$, the isotropy subgroup 
$$
\Diff_c(P,\xi;\Phi)=\{g\in\Diff_c(P,\xi):\Psi_g^{\mathcal E}(\Phi)=\Phi\}
$$ 
is a splitting Lie subgroup of $\Diff_c(P,\xi)$.
\end{lemma}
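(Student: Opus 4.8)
The plan is to pin down the isotropy subgroup inside one chart of $\Diff_c(P,\xi)$ at the identity and to recognise it there as $\mathcal W$ intersected with a complemented linear subspace. First I would unwind the equation $\Psi_g^{\mathcal E}(\Phi)=\Psi_g^{L^*}\circ\Phi=\Phi$. Put $\varphi:=\pi^{\mathcal E}(\Phi)\in\Emb(S,P)$ and $N:=\img\varphi\subseteq P$ (a compact submanifold, since $S$ is closed); because each $\Phi_y\colon(|\Lambda|_S^*)_y\to L^*_{\varphi(y)}$ is a linear isomorphism, the image of $\Phi$ is exactly $L^*|_N$, and $\Psi_g^{L^*}\circ\Phi=\Phi$ unwinds to $g\circ\varphi=\varphi$ (hence $g|_N=\id_N$, as $\varphi$ is an embedding) together with $\Psi^{L^*}_{g,x}=\id_{L^*_x}$, equivalently $\Psi^L_{g,x}=\id_{L_x}$, for every $x\in N$. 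Thus
$$
\Diff_c(P,\xi;\Phi)=\left\{g\in\Diff_c(P,\xi)\middle|g|_N=\id_N\ \text{and}\ \Psi^L_{g,x}=\id_{L_x}\ \forall x\in N\right\};
$$
in particular the isotropy group depends only on the submanifold $N$, not on the weight part of $\Phi$.

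Next I would transport this through the chart $F\colon\mathcal W\to\Diff_c(P,\xi)$ of \eqref{E:chart}. By the pointwise fixed-point criterion \eqref{E:chartfix}, for $h\in\mathcal W$ and $x\in P$ one has $j^1_xh=0$ if and only if $F(h)(x)=x$ and $\Psi^L_{F(h),x}=\id_{L_x}$. Intersecting over $x\in N$ and comparing with the description above yields
$$
F^{-1}\bigl(\Diff_c(P,\xi;\Phi)\bigr)=\mathcal W\cap\mathfrak h,\qquad\text{where}\quad\mathfrak h:=\left\{h\in\Gamma_c^\infty(L)\middle|(j^1h)|_N=0\right\}.
$$
Thus everything reduces to showing that $\mathfrak h$ is a closed, complemented linear subspace of $\Gamma_c^\infty(L)$: granting this, $F$ is a submanifold chart for $\Diff_c(P,\xi;\Phi)$ at the identity, left translation by elements of the subgroup produces submanifold charts everywhere, and $\Diff_c(P,\xi;\Phi)$ is a splitting Lie subgroup (it is moreover closed, being the stabiliser of $\Phi$ under a continuous action).

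The genuinely technical step, and the one I expect to be the main obstacle, is the complementedness of $\mathfrak h$. Since $h|_N\equiv0$ already forces the derivative of $h$ along $N$ to kill $TN$, the condition $(j^1h)|_N=0$ is equivalent to the simultaneous vanishing of $h|_N\in\Gamma^\infty(L|_N)$ and of the fibrewise normal derivative of $h$ along $N$, a section of $\nu^*\otimes L|_N$ with $\nu:=(TP|_N)/TN$ the normal bundle. Hence $\mathfrak h=\ker r$ for the continuous linear map $r\colon\Gamma_c^\infty(L)\to\Gamma^\infty(L|_N)\oplus\Gamma^\infty(\nu^*\otimes L|_N)$ given by these two jet data along $N$. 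I would then build a continuous linear right inverse of $r$ in the usual way: fix a tubular neighbourhood $U$ of $N$ with retraction $U\to N$, use a connection on $L$ to identify $L|_U$ with the pullback of $L|_N$, choose a cutoff function supported in $U$ and equal to $1$ near $N$, and send $(\sigma,\tau)$ to the cutoff times the section that is affine along each tubular fibre with value $\sigma$ and normal derivative $\tau$ at $N$. This exhibits $\Gamma_c^\infty(L)$ as the direct sum of $\mathfrak h$ and the image of this right inverse, so $\mathfrak h$ is complemented; the argument is routine but needs the standard care with tubular neighbourhoods and cutoffs, cf.\ \cite{KM97}.
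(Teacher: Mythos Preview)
Your proposal is correct and follows essentially the same approach as the paper: identify the isotropy condition via \eqref{E:chartfix} as the vanishing of the $1$-jet along $N$, then construct a complement to $\mathfrak h$ by extending sections of $L|_N\oplus(L|_N\otimes W^*)$ (with $W$ the normal bundle) to fibrewise-affine sections over a tubular neighbourhood, cut off by a bump function. The only cosmetic difference is that you phrase the complementedness as building a right inverse to the restriction map $r$, whereas the paper directly exhibits the complement as the image of such an extension map $\chi$.
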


\begin{proof}
Put $\varphi=\pi^{\mathcal E}(\Phi)\in\Emb(S,P)$ and $N:=\varphi(S)$.
For the chart $F$ in \eqref{E:chart} we obtain
$$
F^{-1}\bigl(\Diff_c(P,\xi;\Phi)\bigr)=\left\{h\in\Gamma_c^\infty(L)\bigm|\forall x\in N:j^1_xh=0\right\}\cap\mathcal W,
$$
see \eqref{E:chartfix} and \eqref{boxaction}.
Since $N$ is a closed submanifold in $P$, the linear space on the right hand side admits a linear complement in $\Gamma^\infty_c(L)$.
To construct such a complement, let $\pi^W\colon W\to N$ denote the normal bundle of $N$, where $W=TP|_N/TN$;
fix a tubular neighborhood $W\subseteq P$ of $N$ such that $N$ corresponds to the zero section in $W$; and
choose an isomorphism of line bundles $L|_W\cong(\pi^W)^*L|_N$.
This provides a linear map 
\begin{equation}\label{E:j1K}
\Gamma^\infty(L|_N)\oplus\Gamma^\infty(L|_N\otimes W^*)\to\Gamma^\infty(L|_W),
\end{equation} 
by regarding sections of $L|_N$ as $\pi^W$-fiberwise constant sections of $L|_W$, and by regarding sections of $L|_N\otimes W^*$ as $\pi^W$-fiberwise linear sections of $L|_W$.
Let $\chi\in C^\infty_c(W,\mathbb R)$ be a compactly supported bump function such that $\chi\equiv1$ in a neighborhood of the zero section.
Multiplication with $\chi$ and extension by zero provides a linear map $\Gamma^\infty(L|_W)\to\Gamma^\infty_c(L)$.
Composing this with \eqref{E:j1K}, we obtain a linear map we will denoted by
\begin{equation}\label{E:j1Kc}	
\chi\colon\Gamma^\infty(L|_N)\oplus\Gamma^\infty\bigl(L|_N\otimes W^*\bigr)\to\Gamma^\infty_c(L).
\end{equation}	
	The image of $\chi$ provides a linear complement of $\left\{h\in\Gamma_c^\infty(L)\bigm|\forall x\in N:j^1_xh=0\right\}$ in $\Gamma^\infty_c(L)$.
Hence, $\Diff_c(P,\xi;\Phi)$ is a splitting Lie subgroup of $\Diff_c(P,\xi)$.
\end{proof}

Suppose $\Phi_1,\Phi_2\in\mathcal M$, and write $\varphi_i=\pi^{\mathcal M}(\Phi_i)\in C^\infty(S,P)$.
For $x\in S$ consider the restrictions to the fibers, $\Phi_{1,x}\colon|\Lambda|_{S,x}^*\to L_{\varphi_1(x)}^*$ and $\Phi_{2,x}\colon|\Lambda|_{S,x}^*\to L_{\varphi_2(x)}^*$, and define a smooth map $G(\Phi_1,\Phi_2)\colon S\to\mathcal P$ by 
\begin{equation}\label{E:G}
G(\Phi_1,\Phi_2)(x)
=(\Phi_{1,x}\circ\Phi_{2,x}^{-1})^*\in\hom(L_{\varphi_1(x)},L_{\varphi_2(x)})
\end{equation}
for $x\in S$.
Clearly,
\begin{equation}\label{E:piG}
p_1\circ p\circ G(\Phi_1,\Phi_2)=\varphi_1\qquad\text{and}\qquad p_2\circ p\circ G(\Phi_1,\Phi_2)=\varphi_2.
\end{equation}

\begin{lemma}\label{L:levels}
The map $G(\Phi_1,\Phi_2)\colon S\to\mathcal P$ has isotropic image iff $J_R^{\mathcal M}(\Phi_1)=J_R^{\mathcal M}(\Phi_2)$.
\end{lemma}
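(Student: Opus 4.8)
The plan is to express the condition ``$G(\Phi_1,\Phi_2)$ has isotropic image'' pointwise in terms of the contact structure $\xi^{\mathcal P}$ from \eqref{E:calPxi}, and to recognize the resulting identity as exactly the pointwise characterization \eqref{E:JRpw} of $J_R^{\mathcal M}$. First I would fix $x\in S$ and a tangent vector $W\in T_xS$, and compute how the differential of $G(\Phi_1,\Phi_2)$ at $x$ pairs against $\xi^{\mathcal P}$ at the point $a=G(\Phi_1,\Phi_2)(x)$. Writing $a=(\Phi_{1,x}\circ\Phi_{2,x}^{-1})^*\in\hom(L_{\varphi_1(x)},L_{\varphi_2(x)})$ and using \eqref{E:piG}, the quantities $T_a(p_1\circ p)\cdot T_xG(\Phi_1,\Phi_2)\cdot W$ and $T_a(p_2\circ p)\cdot T_xG(\Phi_1,\Phi_2)\cdot W$ reduce, modulo $\xi$, to $T\varphi_1\cdot W$ and $T\varphi_2\cdot W$ respectively, i.e.\ to the elements of $L_{\varphi_1(x)}$ and $L_{\varphi_2(x)}$ represented by the tangent vectors $T\varphi_i(W)$. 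Thus, by the defining equation \eqref{E:calPxi}, the image of $G(\Phi_1,\Phi_2)$ is isotropic iff for every $x\in S$ and every $W\in T_xS$ one has
\[
a\bigl((T\varphi_1(W))\textrm{ mod }\xi\bigr)=(T\varphi_2(W))\textrm{ mod }\xi.
\]

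The key step is then to rewrite this identity by evaluating both sides against a covector. Pick $w\in|\Lambda|_{S,x}^*$ and recall that $a=(\Phi_{1,x}\circ\Phi_{2,x}^{-1})^*$, so that for $\ell\in L_{\varphi_1(x)}$ we have $\bigl(\Phi_{2,x}(w)\bigr)(a\ell)=\bigl(\Phi_{1,x}\circ\Phi_{2,x}^{-1}\bigr)^{**}\!\!\ldots$ — more directly, $a^*\Phi_{2,x}(w)=\Phi_{1,x}(w)$ in $L^*_{\varphi_1(x)}$ by definition of the transpose. Hence the displayed identity, after applying $\Phi_{2,x}(w)\in L^*_{\varphi_2(x)}$ to its right-hand side and $\Phi_{1,x}(w)$ to its left-hand side, becomes
\[
\bigl(\Phi_{1,x}(w)\bigr)\bigl(T\varphi_1(W)\bigr)=\bigl(\Phi_{2,x}(w)\bigr)\bigl(T\varphi_2(W)\bigr)
\]
for all $x\in S$, $W\in T_xS$, and $w\in|\Lambda|_{S,x}^*$. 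Using the canonical contraction between $L^*\subseteq T^*P$ and $TP$, the left-hand side is precisely $\bigl(\Phi_1\circ\lambda\bigr)(T\varphi_1\circ Z)$ and the right-hand side $\bigl(\Phi_2\circ\lambda\bigr)(T\varphi_2\circ Z)$ when we let $Z\in\mathfrak X(S)$ be a vector field with $Z_x=W$ and $\lambda\in\Gamma^\infty(|\Lambda|_S^*)$ a section with $\lambda(x)=w$. By the pointwise formula \eqref{E:JRpw}, these are exactly $\lambda\bigl(J_R^{\mathcal M}(\Phi_1)(Z)\bigr)$ and $\lambda\bigl(J_R^{\mathcal M}(\Phi_2)(Z)\bigr)$. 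Since $Z_x$ and $\lambda(x)$ range over all of $T_xS$ and $|\Lambda|^*_{S,x}$, and $x$ over all of $S$, the isotropy condition is equivalent to $J_R^{\mathcal M}(\Phi_1)=J_R^{\mathcal M}(\Phi_2)$.

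The main obstacle I anticipate is purely bookkeeping: keeping straight the several canonical identifications in play — the identification \eqref{E:thetaLs} of the pairing as living either in $L^*_x\times L_x$ or in $T^*_xP\times T_xP$, the transpose convention used in \eqref{E:G}, and the fact that $\Phi_{i,x}$ maps $|\Lambda|^*_{S,x}$ to $L^*_{\varphi_i(x)}$ (so its transpose $\Phi_{i,x}^*$ maps $L_{\varphi_i(x)}$ to $|\Lambda|_{S,x}$). There is also a minor point worth checking carefully, namely that the differential of $G(\Phi_1,\Phi_2)$ at $x$, when composed with $T(p_j\circ p)$, really does recover $T\varphi_j$ modulo $\xi$ with no extra ``vertical'' correction terms — this is where \eqref{E:piG} is used, and it is the only place the structure of $\xi^{\mathcal P}$ beyond its defining formula enters. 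Once these identifications are fixed once and for all, the argument is a short pointwise computation with no analysis involved.
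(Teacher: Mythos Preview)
Your proposal is correct and follows essentially the same approach as the paper's proof: both unwind the defining equation \eqref{E:calPxi} of $\xi^{\mathcal P}$ at $a=G(\Phi_1,\Phi_2)(x)$, use \eqref{E:piG} to replace $T(p_j\circ p)\circ TG$ by $T\varphi_j$, and then recognize the resulting identity as the pointwise description \eqref{E:JRpw} of $J_R^{\mathcal M}$. The only cosmetic difference is that the paper applies $\Phi_{i,x}^*$ to land in $|\Lambda|_{S,x}$ before pairing with $\lambda_x$, whereas you pair directly with $\Phi_{2,x}(w)\in L^*_{\varphi_2(x)}$ and use $a^*\Phi_{2,x}(w)=\Phi_{1,x}(w)$; these are dual formulations of the same computation.
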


\begin{proof}
Suppose $x\in S$, $Z_x\in T_xS$, $0\neq\lambda_x\in|\Lambda|_{S,x}^*$, and write $a:=G(\Phi_1,\Phi_2)(x)$.
Then:
\begin{align*}
T_xG(&\Phi_1,\Phi_2)\cdot Z_x\in\xi_a^{\mathcal P}
\\&\stackrel{\eqref{E:calPxi}}\Leftrightarrow
a\bigl(T_a(p_1\circ p)T_xG(\Phi_1,\Phi_2)\cdot Z_x\textrm{ mod } \xi_{(p_1\circ p)(a)}\bigr)
\\&\qquad\qquad=T_a(p_2\circ p)T_xG(\Phi_1,\Phi_2)\cdot Z_x\textrm{ mod } \xi_{(p_2\circ p)(a)}
\\&\stackrel{\eqref{E:piG}}\Leftrightarrow
a\bigl(T_x\varphi_1\cdot Z_x\textrm{ mod } \xi_{(p_1\circ p)(a)}\bigr)=T_x\varphi_2\cdot Z_x\textrm{ mod } \xi_{(p_2\circ p)(a)}
\\&\stackrel{\eqref{E:G}}\Leftrightarrow
\Phi_{1,x}^*\bigl(T_x\varphi_1\cdot Z_x\textrm{ mod } \xi_{(p_1\circ p)(a)}\bigr)=\Phi_{2,x}^*\bigl(T_x\varphi_2\cdot Z_x\textrm{ mod } \xi_{(p_2\circ p)(a)}\bigr)
\\&\Leftrightarrow
\lambda_x\bigl(\Phi_{1,x}^*\bigl(T_x\varphi_1\cdot Z_x\textrm{ mod } \xi_{(p_1\circ p)(a)}\bigr)\bigr)=\lambda_x\bigl(\Phi_{2,x}^*\bigl(T_x\varphi_2\cdot Z_x\textrm{ mod } \xi_{(p_2\circ p)(a)}\bigr)\bigr)
\\&\Leftrightarrow
\Phi_{1,x}(\lambda_x)(T_x\varphi_1\cdot Z_x)=\Phi_{2,x}(\lambda_x)(T_x\varphi_2\cdot Z_x)
\\&\stackrel{\eqref{E:JRpw}}\Leftrightarrow
\lambda_x\bigl(J_R^{\mathcal M}(\Phi_1)(Z_x)\bigr)=\lambda_x\bigl(J_R^{\mathcal M}(\Phi_2)(Z_x)\bigr)
\\&\Leftrightarrow
J_R^{\mathcal M}(\Phi_1)(Z_x)=J_R^{\mathcal M}(\Phi_2)(Z_x)
\end{align*}
The lemma follows at once.
\end{proof}

For $\rho\in\Omega^1(S,|\Lambda|_S)$ we let 
$$
\mathcal E^\rho:=(J_R^{\mathcal E})^{-1}(\rho)=\bigl\{\Phi\in\mathcal E:J^{\mathcal E}_R(\Phi)=\rho\bigr\}
$$ 
denote the corresponding level set of the moment map $J_R^{\mathcal E}\colon\mathcal E\to\Omega^1(S,|\Lambda|_S)\subseteq\mathfrak X(S)^*$.

\begin{lemma}\label{L:submf}
The level set $\mathcal E^\rho$ is a smooth splitting Fr\'echet submanifold in $\mathcal E$, for each $\rho\in\Omega^1(S,|\Lambda|_S)$.
\end{lemma}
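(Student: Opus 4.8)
The strategy is to realize $\mathcal E^\rho$ locally as a fiber of a submersion whose fibers split, using the chart for the contact group from \eqref{E:chart} together with the local transitivity of the $\Diff_c(P,\xi)$-action on level sets. Concretely, fix $\Phi_0\in\mathcal E^\rho$ and put $\varphi_0=\pi^{\mathcal E}(\Phi_0)$, $N=\varphi_0(S)$. Since $J_R^{\mathcal E}$ is $\Diff(P,\xi)$-invariant (Proposition~\ref{P:momentmaps}(b)) and the reparametrization action is free on $\mathcal E$, I would first observe that $\mathcal E^\rho$ is invariant under $\Diff_c(P,\xi)$ and under the isotropy group $\Diff(S,\rho)$. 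The key structural fact is Lemma~\ref{L:levels}: $\Phi\in\mathcal E^\rho$ with $J_R^{\mathcal M}(\Phi)=J_R^{\mathcal M}(\Phi_0)=\rho$ is equivalent to the map $G(\Phi,\Phi_0)\colon S\to\mathcal P$ having isotropic image, which by the discussion surrounding \eqref{E:calPxi}--\eqref{E:PsiLFh} means precisely that $G(\Phi,\Phi_0)$ arises as $a$ for some (germ of a) contact diffeomorphism along $\varphi_0$.

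The main step is to build a local chart for $\mathcal E$ near $\Phi_0$ adapted to this splitting. Using the tubular-neighborhood construction from the proof of Lemma~\ref{L:isotropy}, together with the Weinstein-type normal form $\Xi$ in \eqref{E:Xi} and the chart $F$ in \eqref{E:chart}, I would produce a local diffeomorphism from a neighborhood of $\Phi_0$ in $\mathcal E$ onto a neighborhood of $0$ in a product of Fréchet spaces $\mathcal A\times\mathcal B$, where $\mathcal A$ parametrizes the ``contact directions'' (sections of $J^1L$ along $N$, or equivalently germs of the map $G(\cdot,\Phi_0)$ landing in the isotropic locus), and $\mathcal B$ parametrizes a transverse complement. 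By Lemma~\ref{L:levels}, under this identification $\mathcal E^\rho$ corresponds exactly to $\mathcal A\times\{0\}$: membership in the level set is governed only by the requirement that the jet data along $N$ be of the form $j^1h$ (equivalently, that $G(\Phi,\Phi_0)$ be isotropic), which is a \emph{linear} condition on the chart variables cutting out $\mathcal A$, while the complementary coordinates $\mathcal B$ are unconstrained and their vanishing is the defining equation of $\mathcal E^\rho$. Since $\mathcal A$ and $\mathcal B$ are themselves split Fréchet spaces (the splitting being inherited from the closed-submanifold splitting of $\Gamma^\infty_c(L)$ constructed via \eqref{E:j1Kc} and from the standard fact that restriction-to-$N$ and jet-evaluation maps split), this exhibits $\mathcal E^\rho$ as a splitting submanifold.

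The principal obstacle is the bookkeeping needed to show that the level-set condition really does become the vanishing of a set of \emph{complementary} chart coordinates, i.e.\ that the tangent directions along $\mathcal E^\rho$ together with the ``$\mathcal B$'' directions span $T_{\Phi_0}\mathcal E$ and intersect trivially. Here I would invoke the dual pair property of Theorem~\ref{T:dp}: by \eqref{E:dp2} the tangent space to the $\Diff(S)$-orbit is the symplectic orthogonal of the $\Diff(P,\xi)$-orbit, and $\ker T_{\Phi_0}J_R^{\mathcal E}=\{\zeta^{\mathcal E}_X(\Phi_0):X\in\mathfrak X(P,\xi)\}$ is the tangent space of $\mathcal E^\rho$ at $\Phi_0$; combined with surjectivity of $T_{\Phi_0}J_R^{\mathcal E}$ onto the relevant model space (which follows from \eqref{E:dp1}, or directly from the explicit formula \eqref{E:Jalpha} in the trivialized picture of Remark~\ref{R:JLRalpha}, where $J_R^{\mathcal E}(\varphi,h)=\varphi^*\alpha\otimes h\mu$ has obviously surjective differential), this identifies the model space and the complement. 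A clean way to organize this is to note that $J_R^{\mathcal E}$ factors through the first jet of $G(\cdot,\Phi_0)$ along $N$, reduce to showing this jet map is a split submersion onto its image, and then transport the splitting of $\Gamma^\infty_c(L)$ from Lemma~\ref{L:isotropy} through the chart.
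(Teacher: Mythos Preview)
Your first two paragraphs outline exactly the paper's approach: parametrize a neighborhood of $\Phi_0$ in $\mathcal E$ by sections of $(J^1L)|_N$ via the map $\Phi\mapsto G(\Phi_0,\Phi)$ composed with the contact normal form $\Xi$, and then use Lemma~\ref{L:levels} to identify $\mathcal E^\rho$ with the split linear subspace of holonomic sections (the image of $\gamma$ in the paper's notation, coming from the map \eqref{E:j1Kc}).

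Your third paragraph, however, misidentifies the remaining work and introduces an unnecessary detour. The paper does \emph{not} invoke Theorem~\ref{T:dp}, surjectivity of $TJ_R^{\mathcal E}$, or any symplectic-orthogonality argument here. The equivalence ``$\Phi_s\in\mathcal E^\rho\Leftrightarrow s$ holonomic'' is checked directly and pointwise: by Lemma~\ref{L:levels} the condition is that $G(\Phi_0,\Phi_s)$ have isotropic image in $\mathcal P$; since $\Xi$ is a contact diffeomorphism this says the corresponding section of $J^1L$ has isotropic image, which by the defining property of the contact structure on $J^1L$ is exactly the linear holonomy condition. This is the content of \eqref{E:tsiso}--\eqref{E:Phisiso}. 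Appealing to the dual pair would be both superfluous and insufficient: tangent-level splitting alone does not yield the submanifold property in the Fr\'echet category (there is no inverse function theorem), so the explicit linearization in the chart is essential, not mere bookkeeping. The genuinely delicate step you underemphasize is rather proving that the chart $s\mapsto\Phi_s$ is a \emph{diffeomorphism} onto an open subset of $\mathcal E$; the paper handles this by exhibiting an explicit smooth inverse, cf.~\eqref{E:srec}.
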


\begin{proof}
Fix $\Phi_1\in\mathcal E^\rho$, put $\varphi_1:=\pi^{\mathcal E}(\Phi_1)\in\Emb(S,P)$, and consider the submanifold $N:=\varphi_1(S)$ of $P$.
Let $\pi^W\colon W\to N$ denote its normal bundle, $W:=TP|_N/TN$.
Choose a tubular neighborhood $W\subseteq P$ of $N$ in $P$ such that the zero section in $W$ corresponds to $N$.
As in the proof of Lemma~\ref{L:isotropy}, we fix an isomorphism of line bundles, 
\begin{equation}\label{E:Ltriv}
L|_W\cong(\pi^W)^*L|_N
\end{equation}
and a compactly supported bump function $\chi\in C^\infty_c(W,\mathbb R)$ such that $\chi\equiv1$ on an open neighborhood $\mathcal X$ of the zero section in $W$.
The corresponding map \eqref{E:j1Kc} extends uniquely to a linear map $\tilde\chi$ such that the following diagram commutes:
\begin{equation}\label{D:pGs}
\vcenter{\xymatrix{
\Gamma^\infty_c(L)\ar[rr]^-{j^1}&&\Gamma^\infty_c(J^1L)\\
\Gamma^\infty(L|_N)\oplus\Gamma^\infty\bigl(L|_N\otimes W^*\bigr)\ar[rr]^-{j^1\oplus\id}\ar[u]^-\chi&&\Gamma^\infty\bigl(J^1(L|_N)\bigr)\oplus\Gamma^\infty\bigl(L|_N\otimes W^*\bigr)\ar[u]_-{\tilde\chi}
}}
\end{equation}
The line bundle isomorphism in \eqref{E:Ltriv} also provides an isomorphism
\begin{equation}\label{E:ts}
\Gamma^\infty\bigl((J^1L)|_N\bigr)\cong\Gamma^\infty\bigl(J^1(L|_N)\bigr)\oplus\Gamma^\infty\bigl(L|_N\otimes W^*\bigr).
\end{equation}
Using this isomorphism to replace the lower right corner in the diagram~\eqref{D:pGs}, we obtain linear maps $\gamma$ and $\Gamma^\infty\bigl((J^1L)|_N\bigr)\to\Gamma^\infty_c(J^1L)$, $s\mapsto\tilde s$, such that the following diagram commutes:
\begin{equation}\label{D:Gs}
\vcenter{\xymatrix{
\Gamma^\infty_c(L)\ar[rr]^-{j^1}&&\Gamma^\infty_c(J^1L)&\tilde s\\
\Gamma^\infty(L|_N)\oplus\Gamma^\infty\bigl(L|_N\otimes W^*\bigr)\ar[rr]^-\gamma\ar[u]^-\chi&&\Gamma^\infty\bigl((J^1L)|_N\bigr)\ar[u]&s\ar@{|->}[u]
}}
\end{equation}

For every $\nu\in\Gamma^\infty(W)$ with $\nu(N)\subseteq\mathcal X$ we obtain a linear isomorphism
\begin{equation}\label{E:tnts}
\tilde\nu\colon\Gamma^\infty\bigl((J^1L)|_N\bigr)\to\Gamma^\infty\bigl(\nu^*(J^1L)\bigr),\qquad\tilde\nu(s):=\tilde s\circ\nu.
\end{equation}
Moreover, $\tilde\nu$ and its inverse $\tilde\nu^{-1}$ are given by first order differential operators depending smoothly on $\nu$.
Furthermore, if $\nu(N)\subseteq\mathcal X$ and $s\in\Gamma^\infty((J^1L)|_N)$, then
\begin{equation}\label{E:tsiso}
\textrm{$\tilde s\circ\nu$ has isotropic image in $J^1L$}\quad\Leftrightarrow\quad s\in\img(\gamma).
\end{equation}
Also note that $\img(\gamma)$ admits a closed complementary subspace in $\Gamma^\infty((J^1L)|_N)$.
Indeed, the space of smooth sections in the kernel of the canonical projection $J^1(L|_N)\to L|_N$ provides a closed complement for the image of $j^1\colon\Gamma^\infty(L|_N)\to\Gamma^\infty(J^1(L|_N))$.
Taking the sum with $\Gamma^\infty(L|_N\otimes W^*)$ and using \eqref{E:ts}, we obtain a complementary subspace of $\img(\gamma)$ in $\Gamma^\infty((J^1L)|_N)$.

Let $\mathcal V$ denote the $C^\infty$-open neighborhood of zero in $\Gamma^\infty((J^1L)|_N)$ consisting of all $s\in\Gamma^\infty((J^1L)|_N)$ with the following five properties:	
\begin{enumerate}[(a)]
\item\label{I:V1} the image of $\tilde s$ is contained in $V$, cf.\ \eqref{E:Xi},
\item\label{I:V2} $p_1\circ p\circ\Xi\circ\tilde s\colon P\to P$ is a diffeomorphism,
\item\label{I:V3} $p_2\circ p\circ\Xi\circ\tilde s\colon P\to P$ is a diffeomorphism,
\item\label{I:V4} the image of $(p_1\circ p\circ\Xi\circ\tilde s)^{-1}\circ\varphi_1\colon S\to P$ is contained in $\mathcal X\subseteq W$, and 
\item\label{I:V5} $\psi_s:=\pi^W\circ(p_1\circ p\circ\Xi\circ\tilde s)^{-1}\circ\varphi_1\colon S\to N$ is a diffeomorphism.
\end{enumerate}
For $s\in\mathcal V$ we define $\nu_s:=(p_1\circ p\circ\Xi\circ\tilde s)^{-1}\circ\varphi_1\circ\psi_s^{-1}\in\Gamma^\infty(W)$. Hence,
\begin{equation}\label{E:nus}
\nu_s\circ\psi_s=(p_1\circ p\circ\Xi\circ\tilde s)^{-1}\circ\varphi_1.
\end{equation}

We will next show that the following map is a diffeomorphism
\begin{align}\notag
\Gamma^\infty\bigl((J^1L)|_N\bigr)\supseteq\mathcal V&\to\mathcal U\subseteq\bigl\{G\in C^\infty(S,\mathcal P):p_1\circ p\circ G=\varphi_1\bigr\}
\\\label{E:Gs}
s&\mapsto G_s:=\Xi\circ\tilde s\circ\bigl(p_1\circ p\circ\Xi\circ\tilde s)^{-1}\circ\varphi_1
\end{align}
from $\mathcal V$ onto the $C^\infty$-open subset $\mathcal U$ in $\bigl\{G\in C^\infty(S,\mathcal P):p_1\circ p\circ G=\varphi_1\bigr\}$ consisting of all $G\in C^\infty(S,\mathcal P)$ with the following five properties:
\begin{enumerate}[(a)]
\setcounter{enumi}{5}
\item\label{I:U1} $p_1\circ p\circ G=\varphi_1$,
\item\label{I:U2} the image of $G$ is contained in $U$, cf.\ \eqref{E:Xi},
\item\label{I:U3} the image of $\pi^{J^1L}\circ\Xi^{-1}\circ G\colon S\to P$ is contained in $\mathcal X\subseteq W$,
\item\label{I:U4} $\psi_G:=\pi^W\circ\pi^{J^1L}\circ\Xi^{-1}\circ G\colon S\to N$ is a diffeomorphism, and 
\item\label{I:U5} $s_G:=\tilde\nu_G^{-1}\bigl(\Xi^{-1}\circ G\circ\psi^{-1}_G\bigr)\in\mathcal V$, where $\nu_G:=\pi^{J^1L}\circ\Xi^{-1}\circ G\circ\psi_G^{-1}\in\Gamma^\infty(W)$.
\end{enumerate}

To see that \eqref{E:Gs} is a diffeomorphism, let $s\in\mathcal V$ and observe that \eqref{E:nus} and \eqref{E:Gs} yield 
\begin{equation}\label{E:Gss}
G_s=\Xi\circ\tilde s\circ\nu_s\circ\psi_s
\end{equation}
as well as $\psi_{G_s}=\psi_s$ and $\nu_{G_s}=\nu_s$.
Hence, $\Xi^{-1}\circ G_s\circ\psi_{G_s}^{-1}=\tilde s\circ\nu_{G_s}$ and \eqref{E:tnts} gives
\begin{equation}\label{E:srec}
s=\tilde\nu_{G_s}^{-1}\bigl(\Xi^{-1}\circ G_s\circ\psi_{G_s}^{-1}\bigr).
\end{equation}
We conclude that $G_s\in\mathcal U$ and $s_{G_s}=s$, for all $s\in\mathcal V$, see \eqref{I:U5}.
This shows that the map $\mathcal U\to\mathcal V$, $G\mapsto s_G$, is left inverse to the map \eqref{E:Gs}.
To show that it is right inverse too, consider $G\in\mathcal U$ and note that \eqref{E:tnts} and \eqref{I:U5} yield $\tilde s_G\circ\nu_G=\Xi^{-1}\circ G\circ\psi_G^{-1}$.
Hence,
$$
\Xi\circ\tilde s_G\circ\nu_G\circ\psi_G=G.
$$
Composing with $p_1\circ p$ and using \eqref{I:V2}, \eqref{I:U1} we obtain,
$$
\nu_G\circ\psi_G=\bigl(p_1\circ p\circ\Xi\circ\tilde s_G\bigr)^{-1}\circ\varphi_1.
$$
Combining the latter two equations, we get
$$
\Xi\circ\tilde s_G\circ\bigl(p_1\circ p\circ\Xi\circ\tilde s_G\bigr)^{-1}\circ\varphi_1=G.
$$
In other words, $G_{s_G}=G$, for all $G\in\mathcal U$, cf.~\eqref{E:Gs}.
This shows that \eqref{E:Gs} is indeed a diffeomorphism.
Using \eqref{E:tsiso}, \eqref{E:Gss}, and the fact that $\Xi$ is a contact diffeomorphism we find
\begin{equation}\label{E:Gsiso}
\textrm{$G_s$ has isotropic image in $\mathcal P$}\quad\Leftrightarrow\quad s\in\img(\gamma).
\end{equation}

The construction in \eqref{E:G}, cf.\ also \eqref{E:piG}, provides a diffeomorphism
$$
\mathcal M\cong\bigl\{G\in C^\infty(S,\mathcal P):p_1\circ p\circ G=\varphi_1\bigr\},\qquad\Phi_2\mapsto G(\Phi_1,\Phi_2).
$$
Combining this with the diffeomorphism in \eqref{E:Gs}, we see that the map
\begin{equation}\label{E:chartErho}
\Gamma^\infty\bigl((J^1L)|_N\bigr)\supseteq\mathcal V\to\mathcal E,\qquad s\mapsto\Phi_s,
\end{equation}
characterized by $G(\Phi_1,\Phi_s)=G_s$, is a diffeomorphism from $\mathcal V$ onto a $C^\infty$-open neighborhood of $\Phi_1$ in $\mathcal E$.
Combining Lemma~\ref{L:levels} with \eqref{E:Gsiso} and $J_R^{\mathcal E}(\Phi_1)=\rho$, we obtain
\begin{equation}\label{E:Phisiso}
J_R^{\mathcal E}(\Phi_s)=\rho\quad\Leftrightarrow\quad s\in\img(\gamma).
\end{equation}
This shows that \eqref{E:chartErho} is a submanifold chart for $\mathcal E^\rho$ in $\mathcal E$, centered a $\Phi_1$.
\end{proof}

\begin{lemma}\label{L:section}
The action of $\Diff_c(P,\xi)$ on the level set $\mathcal E^\rho$ admits local smooth sections, for each $\rho\in\Omega^1(S,|\Lambda|_S)$.
\end{lemma}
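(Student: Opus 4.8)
The plan is to produce an explicit local section by unwinding the submanifold chart for $\mathcal E^\rho$ built in the proof of Lemma~\ref{L:submf}, exploiting that this chart was manufactured out of the chart $F$ of $\Diff_c(P,\xi)$ from \eqref{E:chart}. Fix $\Phi_1\in\mathcal E^\rho$, put $\varphi_1:=\pi^{\mathcal E}(\Phi_1)$ and $N:=\varphi_1(S)$, and retain the extension operators $\chi$ and $s\mapsto\tilde s$, the linear map $\gamma$, the contact diffeomorphism $\Xi$, and the diffeomorphism $\mathcal V\ni s\mapsto\Phi_s$ of \eqref{E:chartErho}, under which a neighborhood of $\Phi_1$ in $\mathcal E^\rho$ corresponds to $\mathcal V\cap\img(\gamma)$, see \eqref{E:Phisiso}.

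First I would record an algebraic reformulation of the orbit relation: for a contact diffeomorphism $g$ and $\Phi\in\mathcal E$ one has $\Psi^{\mathcal E}_g(\Phi_1)=\Phi$ if and only if $\Psi^L_{g,\varphi_1(x)}=G(\Phi_1,\Phi)(x)$ for every $x\in S$. This is immediate from $\Psi^{\mathcal E}_g(\Phi_1)=\Psi^{L^*}_g\circ\Phi_1$, see \eqref{boxaction}: since $\Psi^{L^*}_g$ is the action on $L^*$ dual to $\Psi^L_g$ on $L$ (fiberwise inverse transpose), chasing the definition \eqref{E:G} of $G$ rewrites the equation $\Psi^{L^*}_g\circ\Phi_1=\Phi$ on each fiber as $\Psi^L_{g,\varphi_1(x)}=G(\Phi_1,\Phi)(x)$; equality of these line bundle homomorphisms in particular forces the base relation $g\circ\varphi_1=\pi^{\mathcal E}(\Phi)$.

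Next, given $\Phi$ in the neighborhood of $\Phi_1$ in $\mathcal E^\rho$ described above, write $\Phi=\Phi_s$ with $s\in\mathcal V\cap\img(\gamma)$, where $s$ depends diffeomorphically on $\Phi$. Since $\gamma$ is injective and admits a continuous linear left inverse (induced by the $1$-jet projection $J^1(L|_N)\to L|_N$), we can write $s=\gamma(\bar h,w)$ with $(\bar h,w)$ depending continuously on $s$, and set $h:=\chi(\bar h,w)\in\Gamma^\infty_c(L)$; then $h$ depends smoothly on $\Phi$ and vanishes for $\Phi=\Phi_1$, so after shrinking the neighborhood we may assume $h$ lies in the domain $\mathcal W$ of $F$. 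Commutativity of the diagram \eqref{D:Gs} then gives $\tilde s=j^1h$, so that
\[
G(\Phi_1,\Phi_s)=G_s\stackrel{\eqref{E:Gs}}{=}\Xi\circ j^1h\circ\bigl(p_1\circ p\circ\Xi\circ j^1h\bigr)^{-1}\circ\varphi_1 .
\]
Comparing with \eqref{E:PsiLFh}, the right-hand side evaluated at $x$ is exactly $\Psi^L_{F(h),\varphi_1(x)}$, so the reformulation above yields $\Psi^{\mathcal E}_{F(h)}(\Phi_1)=\Phi_s=\Phi$. Hence $\sigma(\Phi):=F(h)$ is a smooth map from a neighborhood of $\Phi_1$ in $\mathcal E^\rho$ into $\Diff_c(P,\xi)$ with $\sigma(\Phi_1)=\id_P$ and $\Psi^{\mathcal E}_{\sigma(\Phi)}(\Phi_1)=\Phi$; composing with left translations in $\Diff_c(P,\xi)$ gives local smooth sections near any point of the orbit through $\Phi_1$. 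In particular the $\Diff_c(P,\xi)$-orbit is open in $\mathcal E^\rho$, which together with Lemma~\ref{L:isotropy} realizes (unions of) connected components of $\mathcal E^\rho$ as homogeneous spaces of the contact group.

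The main obstacle is really just the compatibility of the two charts in the previous step, namely the identity $\tilde s=j^1h$: one has to see that the isotropic jet section occurring in the submanifold chart of $\mathcal E^\rho$ is literally the $1$-jet prolongation of the section $h=\chi(\bar h,w)$ of $L$, rather than merely an abstract isotropic section. This is exactly what the commuting diagram \eqref{D:Gs} from the proof of Lemma~\ref{L:submf} records, so no new analytic input is needed; everything else is the bookkeeping of the reformulation step plus the continuity and smoothness assertions already established there.
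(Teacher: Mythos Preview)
Your proof is correct and follows essentially the same route as the paper: both use the commutativity of diagram~\eqref{D:Gs} to produce, for $s\in\img(\gamma)\cap\mathcal V$, a section $h_s\in\Gamma^\infty_c(L)$ with $j^1h_s=\tilde s$, and then compare \eqref{E:PsiLFh} with \eqref{E:Gs} to conclude $\Psi^{\mathcal E}_{F(h_s)}(\Phi_1)=\Phi_s$. The only organizational difference is that you isolate the equivalence $\Psi^{\mathcal E}_g(\Phi_1)=\Phi\Leftrightarrow\Psi^L_{g,\varphi_1(x)}=G(\Phi_1,\Phi)(x)$ at the outset, whereas the paper derives this at the end via the explicit dualization $\Phi_{s,x}^*\circ\Psi^L_{F(h_s),\varphi_1(x)}=\Phi_{1,x}^*$; and you spell out the left inverse of $\gamma$ where the paper simply asserts the existence of the linear map $s\mapsto h_s$.
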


\begin{proof}
We continue to use the notation set up in the proof of Lemma~\ref{L:submf}.
Using the commutativity of the diagram~\eqref{D:Gs} we obtain a linear map $\img(\gamma)\to\Gamma^\infty_c(L)$, $s\mapsto h_s$, such that $j^1h_s=\tilde s$, for all $s\in\img(\gamma)$.
Using \eqref{E:PsiLFh}, \eqref{I:V1}, \eqref{I:V2}, and \eqref{I:V3}, see also \eqref{E:chart}, we find $h_s\in\mathcal W$ and
$$
\Psi^L_{F(h_s),\varphi_1(x)}=\Bigl(\Xi\circ\tilde s\circ\bigl(p_1\circ p\circ\Xi\circ\tilde s\bigr)^{-1}\Bigr)(\varphi_1(x))
$$
in $\hom(L_{\varphi_1(x)},L_{F(h_s)(\varphi_1(x))})$, for all $x\in S$ and $s\in\img(\gamma)\cap\mathcal V$.
Hence, see \eqref{E:Gs} and \eqref{E:chartErho},
$$
\Psi^L_{F(h_s),\varphi_1(x)}=G(\Phi_1,\Phi_s)(x).
$$
Using \eqref{E:G} we obtain
$$
\Phi_{s,x}^*\circ\Psi^L_{F(h_s),\varphi_1(x)}=\Phi_{1,x}^*,
$$
and dualizing yields
$$
\Psi^{L^*}_{F(h_s),\varphi_1(x)}\circ\Phi_{1,x}=\Phi_{s,x}
$$
for all $x\in S$ and $s\in\img(\gamma)\cap\mathcal V$.
Hence, in view of \eqref{boxaction}, we get
$$
\Psi^{\mathcal E}_{F(h_s)}(\Phi_1)=\Phi_s,
$$
for all $s\in\img(\gamma)\cap\mathcal V$.
As \eqref{E:chartErho} restricts to a chart, $\img(\gamma)\cap\mathcal V\to\mathcal E^\rho$, for the manifold $\mathcal E^\rho$, the lemma follows.
\end{proof}

Combining Lemmas~\ref{L:isotropy}, \ref{L:submf}, and \ref{L:section} we obtain the following result:

\begin{theorem}\label{T:Erho}
Suppose $\rho\in\Omega^1(S,|\Lambda|_S)$.
Then the level set $\mathcal E^\rho$ is a smooth splitting Fr\'echet submanifold of $\mathcal E$.
For $\Phi\in\mathcal E^\rho$, the isotropy subgroup $\Diff_c(P,\xi;\Phi)$ is a closed Lie subgroup of $\Diff_c(P,\xi)$.
Moreover, the map provided by the action, $\Diff_c(P,\xi)\to\mathcal E^\rho$, $g\mapsto\Psi^{\mathcal E}_g(\Phi)$, admits a local smooth right inverse defined in a neighborhood of $\Phi$ in $\mathcal E^\rho$.
In particular, the group $\Diff_c(P,\xi)$ acts locally and infinitesimally transitive on $\mathcal E^\rho$, and the $\Diff_c(P,\xi)$-orbit of $\Phi$ is open and closed in $\mathcal E^\rho$.
Denoting this orbit by $\mathcal E^\rho_\Phi$, the map $\Diff_c(P,\xi)\to\mathcal E^\rho_\Phi$ is a smooth principal bundle with structure group $\Diff_c(P,\xi;\Phi)$.
Hence, 
$$
\mathcal E^\rho_\Phi=\Diff_c(P,\xi)/\Diff_c(P,\xi;\Phi)
$$ 
may be regarded as a homogeneous space.
\end{theorem}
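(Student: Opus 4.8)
The plan is to deduce the theorem by assembling Lemmas~\ref{L:isotropy}, \ref{L:submf}, and~\ref{L:section}, together with standard facts about smooth actions of Fr\'echet Lie groups. That $\mathcal E^\rho$ is a smooth splitting Fr\'echet submanifold of $\mathcal E$ is precisely Lemma~\ref{L:submf}. Since $J_R^{\mathcal E}$ is $\Diff_c(P,\xi)$-invariant, see Proposition~\ref{P:momentmaps}(b), the contact group preserves the level set $\mathcal E^\rho$, and the restricted action is smooth because it is smooth on the ambient manifold $\mathcal E$ and $\mathcal E^\rho$ is a splitting submanifold. For $\Phi\in\mathcal E^\rho$, the isotropy subgroup $\Diff_c(P,\xi;\Phi)$ is a splitting Lie subgroup of $\Diff_c(P,\xi)$ by Lemma~\ref{L:isotropy}, and it is closed because it is the preimage of the single point $\Phi$ under the continuous orbit map $g\mapsto\Psi^{\mathcal E}_g(\Phi)$.

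Next I would extract the local right inverse from Lemma~\ref{L:section}: in the submanifold chart \eqref{E:chartErho} for $\mathcal E^\rho$ constructed in the proof of Lemma~\ref{L:submf}, the assignment $s\mapsto F(h_s)$, defined for $s$ in the $C^\infty$-open set $\img(\gamma)\cap\mathcal V$, is smooth into $\Diff_c(P,\xi)$, satisfies $\Psi^{\mathcal E}_{F(h_s)}(\Phi)=\Phi_s$, and maps $0$ to $\id_P$; composed with the chart it is a smooth local right inverse of $g\mapsto\Psi^{\mathcal E}_g(\Phi)$ on a neighbourhood of $\Phi$ in $\mathcal E^\rho$. Differentiating this section at $\Phi$ and using \eqref{E:zetaXM} shows that every tangent vector to $\mathcal E^\rho$ at $\Phi$ has the form $\zeta^{\mathcal E}_X(\Phi)$ for a compactly supported contact vector field $X$, so the action is infinitesimally transitive; and the existence of the right inverse gives local transitivity. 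Translating this section by elements of $\Diff_c(P,\xi)$ produces local smooth sections near every point of the orbit $\mathcal E^\rho_\Phi$, so the orbit map maps onto an open neighbourhood of each point of $\mathcal E^\rho_\Phi$; hence $\mathcal E^\rho_\Phi$ is open in $\mathcal E^\rho$. As the orbits partition $\mathcal E^\rho$ and each is open, each is also closed.

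Finally, with $\Diff_c(P,\xi;\Phi)$ a closed splitting Lie subgroup and the orbit map admitting local smooth sections everywhere on $\mathcal E^\rho_\Phi$, the standard reconstruction theorem for principal bundles in the Fr\'echet setting (cf.\ \cite{KM97}) shows that $\Diff_c(P,\xi)\to\mathcal E^\rho_\Phi$ is a smooth principal bundle with structure group $\Diff_c(P,\xi;\Phi)$, which then exhibits $\mathcal E^\rho_\Phi=\Diff_c(P,\xi)/\Diff_c(P,\xi;\Phi)$ as a homogeneous space. I expect no real obstacle at the level of the theorem itself: the substantive content lies entirely in the three lemmas --- in particular in the explicit chart of Lemma~\ref{L:submf}, built from the contact normal-form diffeomorphism \eqref{E:Xi} and the chart \eqref{E:chart} of $\Diff_c(P,\xi)$ at the identity, and in the compatible family $s\mapsto h_s$ of Lemma~\ref{L:section}. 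Once these are in place, the only point requiring a moment of care is checking that translating the single local section at $\Phi$ by group elements indeed covers a full neighbourhood of the orbit, which is routine.
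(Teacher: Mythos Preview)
Your proposal is correct and follows exactly the paper's approach: the paper's proof consists of the single sentence ``Combining Lemmas~\ref{L:isotropy}, \ref{L:submf}, and \ref{L:section} we obtain the following result,'' and you have simply spelled out the routine deductions (invariance of the level set, openness/closedness of orbits, principal bundle structure) that the paper leaves implicit.
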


\section{Weighted non-linear Grassmannians}\label{S:Gr}

We continue to consider a manifold $P$ endowed with a contact structure $\xi$, and a closed manifold $S$.
Recall that the $\Diff(S)$ action is free on the non-linear Stiefel manifold $\mathcal E$ of weighted embeddings.
We will now factor out this action and consider the corresponding space $\mathcal G=\mathcal E/\Diff(S)$ of unparametrized weighted submanifolds of $P$.

\subsection{Principal bundles over non-linear Grassmannians}

Let $\Gr_S(P)$ denote the non-linear Grassmannian of all smooth submanifolds of $P$ which are diffeomorphic to $S$.
It is well know that $\Gr_S(P)$ can be equipped with the structure of a Fr\'echet manifold such that the canonical map $\Emb(S,P)\to\Gr_S(P)$ becomes a principal bundle with structure group $\Diff(S)$.

Consider the space of weighted submanifolds
\begin{equation}\label{gi}
\mathcal G:=\left\{(N,\gamma)\left|\begin{array}{l}\text{$N\in\Gr_S(P)$ and}\\\text{$\gamma\in\Gamma^\infty(|\Lambda|_N\otimes L|_N^*)$ a nowhere vanishing section}\end{array}\right.\right\}.
\end{equation}
The $\Diff(P,\xi)$-actions on $P$ and on $L^*$ induce a left action on $\mathcal G$.
For $g\in\Diff(P,\xi)$ we let $\Psi^{\mathcal G}_g$ denote the corresponding action on $\mathcal G$, that is, $\Psi_g^{\mathcal G}(N,\gamma)=(g(N),g_*\gamma)$.

\begin{remark}\label{R:Galpha}
If $\xi=\ker\alpha$, then the contact form $\alpha$ provides a trivialization $L^*\cong P\times\mathbb R$ which permits to identify $\mathcal G$ with a weighted non-linear Grassmannian,
\begin{equation}\label{E:Galpha}
\mathcal G\cong\Gr^\wt_S(P):=\left\{(N,\nu)\middle| N\in\Gr_S(P)\text{ and }\nu\in\Gamma^\infty(|\Lambda|_N\setminus N)\right\},
\end{equation}
by identifying $(N,\nu)$ with $(N,\nu\otimes\alpha|_N)\in\mathcal G$.
The weighted Grassmannian can be equipped with a smooth structure such that the canonical forgetful map $\Gr_S^\wt(P)\to\Gr_S(P)$ is a smooth fiber bundle.
Indeed, it can be canonically identified with the bundle associated to the principal fiber bundle $\Emb(S,P)\to\Gr_S(P)$ via the $\Diff(S)$-action on the space $\Gamma^\infty(|\Lambda|_S\setminus S)$ of volume densities on $S$.
Note that the induced smooth structure on $\mathcal G$ does not depend on the contact form $\alpha$ for $\xi$.
Via the identification \eqref{E:Galpha}, the $\Diff(P,\xi)$-action becomes
\begin{equation}\label{E:PsiGalpha} 
\Psi^{\mathcal G}_g(N,\nu)=\left(g(N),\frac{g_*\alpha}\alpha\Big|_{g(N)}g_*\nu\right),
\end{equation}
where $g\in\Diff(P,\xi)$ and $(N,\nu)\in\Gr^\wt_S(P)$.
Indeed, $g_*(\nu\otimes\alpha|_N)=\frac{g_*\alpha}\alpha\big|_{g(N)}g_*\nu\otimes\alpha|_{g(N)}$.
\end{remark}

The space $\mathcal G$ in \eqref{gi} can be equipped with the structure of a smooth manifold such that the canonical forgetful map 
$$
\pi^{\mathcal G}\colon\mathcal G\to\Gr_S(P)
$$ 
becomes a smooth fiber bundle with typical fiber $\Gamma^\infty(|\Lambda|_S\setminus S)$.
Indeed, if $(N,\gamma)\in\mathcal G$, then locally around $N$, the contact structure on $P$ is coorientable and can be described by a contact form.
We can therefore use Remark~\ref{R:Galpha} to equip $\mathcal G$ with a smooth structure.
In view of \eqref{E:PsiGalpha} the $\Diff_c(P,\xi)$-action on $\mathcal G$ is smooth.

To an element $\Phi\in\mathcal E=\Emb_\lin(|\Lambda|_S^*,L^*)$ over the embedding $\varphi=\pi^{\mathcal E}(\Phi)\in\Emb(S,P)$ we associate a pair $(N,\gamma)\in\mathcal G$ in the following way: $N=\varphi(S)$ and $\gamma$ is the composition of $\Phi$ (corestricted to $L^*|_N$) with the isomorphism ${|\Lambda|_\varphi^*}\colon|\Lambda|_N^*\to|\Lambda|_S^*$ induced by the diffeomorphism $\varphi\colon S\to N$.
It is easy to see that the map $q\colon\mathcal E\to\mathcal G$, 
given by $q(\Phi)=(N,\gamma)$, is a smooth principal bundle with structure group $\Diff(S)$.
We summarize this in the following $\Diff(P,\xi)$-equivariant commutative diagram:
\begin{equation}\label{E:EGEmbGr}
\vcenter{\xymatrix{
\mathcal E\ar[d]_-q\ar[rr]^-{\pi^{\mathcal E}}&&\Emb(S,P)\ar[d]\\
\mathcal G\ar[rr]^-{\pi^{\mathcal G}}&&\Gr_S(P)
}}
\end{equation}

By $\Diff(S)$ invariance, see Proposition~\ref{P:momentmaps}(a), the moment map $J^{\mathcal E}_L$ descends to a smooth map 
\begin{equation}\label{E:JLG}
J_L^{\mathcal G}\colon\mathcal G\to\mathfrak X(P,\xi)^*,\qquad J_L^{\mathcal G}\circ q=J_L^{\mathcal E}.
\end{equation}
In view of \eqref{E:jlm} we have the explicit formula
\begin{equation}\label{jlg}
\langle J_L^{\mathcal G}(N,\gamma),X\rangle=\int_N\gamma(X|_N),
\end{equation}
where $(N,\gamma)\in\mathcal G$ and $X\in\mathfrak X(P,\xi)$.
On the right hand side $X$ is regarded as a section of $L$, see \eqref{E:XPxi}, 
restricted to $N$ and contracted with $\gamma$ to produce a density on $N$ which can be integrated.
\footnote{Using a contact form $\alpha$ to identify $\mathcal G\cong\Gr^\wt_S(P)$ as in Remark \ref{R:Galpha}, the map \eqref{jlg} is simply
$$
\langle J_L^{\mathcal G}(N,\nu),X\rangle=\int_N\alpha(X)|_N\nu.
$$
}

\begin{proposition}\label{jleg}
The following assertions hold true:
\begin{enumerate}[(a)]
\item The map $J_L^{\mathcal G}\colon\mathcal G\to\mathfrak X(P,\xi)^*$ is a $\Diff(P,\xi)$-equivariant injective immersion.
\item We have $\Diff(P,\xi;(N,\gamma))=\Diff(P,\xi;J_L^{\mathcal G}(N,\gamma))$, where the left hand side denotes the isotropy group of $(N,\gamma)\in\mathcal G$ and the right hand side denotes the isotropy group of $J_L^{\mathcal G}(N,\gamma)\in\mathfrak X(P,\xi)^*$ for the coadjoint action.
\item The group $\Diff(S)$ acts freely and transitively on level sets of $J_L^{\mathcal E}:\mathcal E\to\mathfrak X(P,\xi)^*$.
\end{enumerate} 
\end{proposition}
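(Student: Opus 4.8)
The plan is to derive all three assertions from part~(a), whose substance is the injectivity of $J_L^{\mathcal G}$ combined with the mutual orthogonality established in Theorem~\ref{T:dp}. Equivariance of $J_L^{\mathcal G}$ is immediate: since $q\colon\mathcal E\to\mathcal G$ is $\Diff(P,\xi)$-equivariant and $J_L^{\mathcal G}\circ q=J_L^{\mathcal E}$, the equivariance of $J_L^{\mathcal E}$ recorded in Proposition~\ref{P:momentmaps}(a) descends along the surjection $q$.

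For injectivity I would argue as follows. By \eqref{jlg} the functional $J_L^{\mathcal G}(N,\gamma)$ is the push forward of $\gamma$ along the inclusion $N\hookrightarrow P$; since $\gamma$ is nowhere vanishing and $N$ is compact, its support is precisely $N$. Hence $J_L^{\mathcal G}(N_1,\gamma_1)=J_L^{\mathcal G}(N_2,\gamma_2)$ forces $N_1=N_2=:N$. To recover the weight, use that the restriction map $\mathfrak X(P,\xi)=\Gamma^\infty(L)\to\Gamma^\infty(L|_N)$ is onto — extend a section of $L|_N$ over a tubular neighborhood and cut off, exactly as in the proof of Lemma~\ref{L:isotropy} — so $\int_N\gamma_1(X|_N)=\int_N\gamma_2(X|_N)$ for all $X\in\mathfrak X(P,\xi)$ forces $\gamma_1=\gamma_2$ pointwise, whence $(N_1,\gamma_1)=(N_2,\gamma_2)$.

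The immersion claim I would check upstairs on $\mathcal E$. Since $q$ is a smooth principal $\Diff(S)$-bundle, $T_\Phi q$ is surjective with kernel the orbit directions $\{\zeta^{\mathcal E}_Z(\Phi)\mid Z\in\mathfrak X(S)\}$; because $J_L^{\mathcal G}\circ q=J_L^{\mathcal E}$, injectivity of $T_{q(\Phi)}J_L^{\mathcal G}$ is equivalent to $\ker T_\Phi J_L^{\mathcal E}\subseteq\ker T_\Phi q$. From $d\langle J_L^{\mathcal E},X\rangle=dh^{\mathcal E}_X=-i_{\zeta^{\mathcal E}_X}\omega^{\mathcal E}$, cf.\ \eqref{E:dhXM} and Proposition~\ref{P:momentmaps}(a), one reads off $\ker T_\Phi J_L^{\mathcal E}=\{\zeta^{\mathcal E}_X(\Phi)\mid X\in\mathfrak X(P,\xi)\}^\perp$, and by \eqref{E:dp2} of Theorem~\ref{T:dp} this equals $\{\zeta^{\mathcal E}_Z(\Phi)\mid Z\in\mathfrak X(S)\}=\ker T_\Phi q$. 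In fact the two kernels coincide, which is also what powers part~(c).

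Parts (b) and (c) then follow formally. For (b): the inclusion ``$\subseteq$'' is equivariance of $J_L^{\mathcal G}$, while ``$\supseteq$'' holds because a $g$ fixing $J_L^{\mathcal G}(N,\gamma)$ satisfies $J_L^{\mathcal G}(\Psi^{\mathcal G}_g(N,\gamma))=J_L^{\mathcal G}(N,\gamma)$ by equivariance, hence $\Psi^{\mathcal G}_g(N,\gamma)=(N,\gamma)$ by the injectivity from (a). For (c): freeness of the $\Diff(S)$-action on $\mathcal E$ is already known, so it restricts to a free action on each level set; and since $J_L^{\mathcal E}=J_L^{\mathcal G}\circ q$ with $J_L^{\mathcal G}$ injective, the level sets of $J_L^{\mathcal E}$ are precisely the fibers of the principal bundle $q$, i.e.\ individual $\Diff(S)$-orbits, which gives transitivity. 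The step demanding the most care is the injectivity of $J_L^{\mathcal G}$ — the support argument to recover $N$, followed by surjectivity of restriction to recover $\gamma$ — together with the appeal to \eqref{E:dp2} for the tangent map; beyond importing the dual pair property I do not anticipate a genuine obstacle.
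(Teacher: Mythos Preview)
Your proposal is correct and follows essentially the same route as the paper: equivariance from Proposition~\ref{P:momentmaps}(a), injectivity via the support argument followed by the pairing against extended sections of $L|_N$, and the immersion property from the dual pair orthogonality~\eqref{E:dp2}, with (b) and (c) deduced formally from injectivity and equivariance. The paper states the immersion step more tersely, but your unpacking through $\ker T_\Phi J_L^{\mathcal E}=\{\zeta^{\mathcal E}_X(\Phi)\}^\perp=\{\zeta^{\mathcal E}_Z(\Phi)\}=\ker T_\Phi q$ is exactly the content behind its one-line appeal to~\eqref{E:dp2}.
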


\begin{proof}
In view of Proposition~\ref{P:momentmaps}(a), the smooth map $J_L^{\mathcal G}$ is $\Diff(P,\xi)$-equivariant.
It follows from the dual pair symplectic orthogonality condition \eqref{E:dp2} that $J_L^{\mathcal G}$ is immersive.
To check injectivity, suppose $(N_1,\gamma_1)$ and $(N_2,\gamma_2)$ are two elements in $\mathcal G$ such that $J_L^{\mathcal G}(N_1,\gamma_1)=J_L^{\mathcal G}(N_2,\gamma_2)$.
Since $\gamma_i$ is nowhere vanishing, we have $\supp(J_L^{\mathcal G}(N_i,\gamma_i))=N_i$, see \eqref{jlg}, whence $N_1=N_2$.
Assume, for the sake of contradiction, $\gamma_1\neq\gamma_2$.
Then there exists $\bar X\in\Gamma^\infty(L|_N)$ such that $\langle\gamma_1,\bar X\rangle\neq\langle\gamma_2,\bar X\rangle$ with respect to the canonical pairing between $\Gamma^\infty(|\Lambda|_N\otimes L|_N^*)$ and $\Gamma^\infty(L|_N)$.
Extending $\bar X$ to a global section $X\in\Gamma^\infty(L)$, we obtain $\langle J_L^{\mathcal G}(N_1,\gamma_1),X\rangle\neq\langle J_L^{\mathcal G}(N_2,\gamma_2),X\rangle$ using \eqref{jlg}.
Since this contradicts our assumption $J_L^{\mathcal G}(N_1,\gamma_1)=J_L^{\mathcal G}(N_2,\gamma_2)$, we must have $\gamma_1=\gamma_2$.
This shows that $J_L^{\mathcal G}$ is injective.

The assertion about the isotropy groups in (b) follows readily from the injectivity and equivariance of $J_L^{\mathcal G}$. 
The assertion in (c) also follows from the injectivity statement in (a), since the $\Diff(S)$-action on the fibers of $q\colon\mathcal E\to\mathcal G$ is free and transitive.
\end{proof}

\subsection{Right leg symplectic reduction}\label{SS:red}

In this section we study the spaces obtained by symplectic reduction for the right moment map $J_R^{\mathcal E}\colon\mathcal E\to\Omega^1(S,|\Lambda|_S)\subseteq\mathfrak X(S)^*$.
For a $1$-form density $\rho\in\Omega^1(S,|\Lambda|_S)$ we put
$$
\mathcal G^\rho:=q(\mathcal E^\rho),
$$
where $\mathcal E^\rho=(J_R^{\mathcal E})^{-1}(\rho)$.
By $\Diff(S)$-equivariance of $J_R^{\mathcal E}$, and since $\Diff(S)$ acts transitively on the fibers of $q\colon\mathcal E\to\mathcal G$, the definition of $\mathcal G^\rho$ may be rephrased equivalently as
\begin{equation}\label{E:qGrho}
q^{-1}(\mathcal G^\rho)=\mathcal E^\rho\cdot\Diff(S)=(J_R^{\mathcal E})^{-1}(\rho\cdot\Diff(S)).
\end{equation}
Here $\rho\cdot\Diff(S)\subseteq\Omega^1(S,|\Lambda|_S)\subseteq\mathfrak X(S)^*$ denotes the coadjoint orbit through $\rho$.
Note that $q$ induces a bijection
\begin{equation}\label{E:Grho}
\mathcal G^\rho=(J_R^{\mathcal E})^{-1}\bigl(\rho\cdot\Diff(S)\bigr)/\Diff(S)=\mathcal E^\rho/\Diff(S,\rho),
\end{equation}
where $\Diff(S,\rho)=\{f\in\Diff(S):f^*\rho=\rho\}$ denotes the isotropy group of $\rho$.
Thus, $\mathcal G^\rho$ is the underlying set of the symplectically reduced space at $\rho$.

We have the following more explicit description of $\mathcal G^\rho$:

\begin{lemma}\label{L:Grho}
For each $\rho\in\Omega^1(S,|\Lambda|_S)$ we have
$$
\mathcal G^\rho=\left\{(N,\gamma)\in\mathcal G\middle|(N,\iota_N^*\gamma)\cong(S,\rho)\right\}.
$$
Here $\iota_N\colon N\to P$ denotes the inclusion and the pull back $\iota_N^*\gamma\in\Omega^1(N,|\Lambda|_N)=\Gamma^\infty(|\Lambda|_N\otimes T^*N)$ is defined as the composition $|\Lambda|_N^*\xrightarrow\gamma L|_N^*\subseteq T^*P|_N\xrightarrow{T^*\iota_N}T^*N$.
\footnote{Because $\gamma$ is nowhere vanishing, the kernel of $\iota_N^*\gamma:TN\to|\Lambda|_N$ coincides with $\xi|_N\cap TN$.}
\end{lemma}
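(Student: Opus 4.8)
The statement to prove is a pointwise/fiberwise reformulation of the condition $J_R^{\mathcal E}(\Phi) = \rho$ in terms of the geometric data $(N,\gamma) = q(\Phi)$. Since $q\colon\mathcal E\to\mathcal G$ is the quotient by $\Diff(S)$, and $J_R^{\mathcal E}$ is $\Diff(S)$-equivariant with respect to the coadjoint action (i.e.\ pull-back of $1$-form densities), the identity \eqref{E:Grho} already tells us that $(N,\gamma)\in\mathcal G^\rho$ iff for some (equivalently, any) $\Phi\in q^{-1}(N,\gamma)$ we have $J_R^{\mathcal E}(\Phi)\in\rho\cdot\Diff(S)$. So the real content is to identify $J_R^{\mathcal E}(\Phi)$, transported to $N$ via the diffeomorphism $\varphi\colon S\to N$, with the pull-back $\iota_N^*\gamma$. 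Concretely, I would first unwind the definition of $\gamma$: by construction $\gamma = \Phi\circ|\Lambda|_\varphi^*$ where $\Phi$ is corestricted to $L^*|_N$ and $|\Lambda|_\varphi^*\colon|\Lambda|_N^*\to|\Lambda|_S^*$ is induced by $\varphi$. Then the claim reduces to the identity $\varphi_*\bigl(J_R^{\mathcal E}(\Phi)\bigr) = \iota_N^*\gamma$ as elements of $\Omega^1(N,|\Lambda|_N)$.

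\textbf{Key steps.} First I would fix $\Phi\in\mathcal E$ over $\varphi\in\Emb(S,P)$, set $N=\varphi(S)$, and use the pointwise formula \eqref{E:JRpw}: for $Z\in\mathfrak X(S)$ and $\lambda\in\Gamma^\infty(|\Lambda|_S^*)$, $\lambda\bigl(J_R^{\mathcal E}(\Phi)(Z)\bigr) = (\Phi\circ\lambda)(T\varphi\circ Z)$, where the right-hand pairing is the canonical contraction of $L^*\subseteq T^*P$ with $TP$. Next I would transport this along $\varphi$: for a point $y=\varphi(x)\in N$, a tangent vector $v\in T_yN$ is of the form $T_x\varphi\cdot Z_x$, and $|\Lambda|_\varphi^*$ identifies $|\Lambda|_{N,y}^*$ with $|\Lambda|_{S,x}^*$. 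Unwinding the definition of $\iota_N^*\gamma$ as the composition $|\Lambda|_N^*\xrightarrow{\gamma}L|_N^*\subseteq T^*P|_N\xrightarrow{T^*\iota_N}T^*N$, one checks that $\bigl(\iota_N^*\gamma\bigr)_y(v)$, paired against the dual density $\lambda_x$ transported to $y$, equals exactly $(\Phi_x\circ\lambda_x)(T_x\varphi\cdot Z_x)$, i.e.\ matches \eqref{E:JRpw}. This gives $\varphi_*\bigl(J_R^{\mathcal E}(\Phi)\bigr)=\iota_N^*\gamma$, hence $(N,\iota_N^*\gamma)\cong(S,J_R^{\mathcal E}(\Phi))$ via $\varphi$. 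Finally, combining this with \eqref{E:Grho}: $(N,\gamma)\in\mathcal G^\rho$ iff $J_R^{\mathcal E}(\Phi)\in\rho\cdot\Diff(S)$ for $\Phi\in q^{-1}(N,\gamma)$, iff $(S,J_R^{\mathcal E}(\Phi))\cong(S,\rho)$, iff $(N,\iota_N^*\gamma)\cong(S,\rho)$. The fact that the isomorphism $\varphi$ can be chosen to carry $\gamma$ to (the unparametrized datum underlying) $J_R^{\mathcal E}(\Phi)$ and hence the whole weighted submanifold structure is automatic since $q(\Phi)=(N,\gamma)$ by definition.

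\textbf{Main obstacle.} The only genuinely delicate point is bookkeeping the three identifications correctly and in a compatible way: (i) the isomorphism $|\Lambda|_\varphi^*\colon|\Lambda|_N^*\to|\Lambda|_S^*$ used to define $\gamma$ from $\Phi$; (ii) the inclusion $L|_N^*\subseteq T^*P|_N$ together with the restriction $T^*\iota_N\colon T^*P|_N\to T^*N$ appearing in the definition of $\iota_N^*\gamma$; and (iii) the canonical contraction $L^*\subseteq T^*P$ with $TP$ used in \eqref{E:JRpw} and \eqref{E:kZMint}. One has to verify that the diagram relating $\Phi$, $\gamma$, and the evaluation against $T\varphi\circ Z$ commutes, which amounts to checking that $T^*\iota_N$ restricted to $L|_N^*$ is compatible with "restriction of a covector along $TN$'' — essentially the observation in the footnote that $\ker(\iota_N^*\gamma) = \xi|_N\cap TN$. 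Once this compatibility is pinned down, the proof is a one-line comparison of \eqref{E:JRpw} with the unwound definition of $\iota_N^*\gamma$ plus the bijection \eqref{E:Grho}. I expect no analytic or infinite-dimensional subtleties here, since everything is fiberwise and the smooth-manifold structure of $\mathcal G$ and $\mathcal G^\rho$ has already been established.
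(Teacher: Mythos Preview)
Your proposal is correct and follows essentially the same route as the paper's proof: both reduce the claim to the identity $(N,\iota_N^*\gamma)\cong(S,J_R^{\mathcal E}(\Phi))$ via $\varphi$, verify this by unwinding the definitions of $q$, $J_R^{\mathcal E}$, and $\iota_N^*\gamma$, and then conclude via \eqref{E:qGrho}. The paper packages the compatibility check you describe in your ``main obstacle'' paragraph into a single commutative diagram rather than working pointwise with \eqref{E:JRpw}, but the underlying verification is identical.
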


\begin{proof}
Consider $\Phi\in\mathcal E$ over $\varphi:=\pi^{\mathcal E}(\Phi)\in\Emb(S,P)$ and put $(N,\gamma):=q(\Phi)$.
By definition of $q$, we have $\varphi(S)=N$ and the ``triangle'' on the top of the following diagram commutes:
$$
\xymatrix{
|\Lambda|_S^*\ar[d]_-{J_R^{\mathcal E}(\Phi)}\ar[rr]_-\Phi&&L|^*_N\ar[d]&&|\Lambda|_N^*\ar[d]^-{\iota_N^*\gamma}\ar[ll]^-\gamma\ar@/_3ex/[llll]_-{|\Lambda|_\varphi^*}\\
T^*S&&T^*P|_N\ar[ll]_-{T^*\varphi}\ar[rr]^-{T^*\iota_N}&&T^*N\ar@/^3ex/[llll]^-{T^*\varphi}
}
$$
The left rectangle in this diagram commutes in view of the formula for $J_R^{\mathcal E}$ in \eqref{E:jrm};
the right rectangle commutes in view of the definition of $\iota_N^*\gamma$;
and the ``triangle'' at the bottom commutes trivially.
We conclude that $(N,\iota_N^*\gamma)\cong(S,J_R^{\mathcal E}(\Phi))$ via $\varphi$.
Hence, $q(\Phi)\cong(S,\rho)$ iff $(S,J_R^{\mathcal E}(\Phi))\cong(S,\rho)$.
The latter, in turn, holds iff there exists $f\in\Diff(S)$ with $J_R^{\mathcal E}(\Phi)=f^*\rho$, i.e., iff $\Phi\in(J_R^{\mathcal E})^{-1}(\rho\cdot\Diff(S))$.
Using the description \eqref{E:qGrho} of $\mathcal G^\rho$ we obtain the lemma.
\end{proof}

\begin{remark}\label{R:Grho}
We have seen in Remark~\ref{R:Galpha} that the choice of a contact form $\alpha$ on $P$ permits to identify $\mathcal G$ with a weighted Grassmannian.
Under this identification, the reduced space becomes
\begin{equation}
\mathcal G^\rho
\cong\left\{(N,\nu)\in\Gr^\wt_S(P):(N,\iota_N^*\alpha\otimes\nu)\cong(S,\rho)\right\}.
\end{equation}
\end{remark}

\begin{remark}
A general fiber of the forgetful map $\pi^{\mathcal G}\colon\mathcal G\to\Gr_S(P)$ will intersect several of the spaces $\mathcal G^\rho$, for many different $\rho$.
A notable exception are fibers over isotropic submanifolds, cf.\ \eqref{E:Giso} in Section~\ref{SS:Giso} below.
\end{remark}

Since we do not expect $\mathcal G^\rho$ to be a submanifold in $\mathcal G$ for general $\rho$, we will consider $\mathcal G^\rho$ as a Fr\"olicher space with the smooth structure induced from the ambient Fr\'echet manifold $\mathcal G$.

Recall that a \emph{Fr\"olicher space,} see \cite[Section~23]{KM97} and \cite{F80,F81,FK88}, is a set $X$ together with a set $\mathcal C_X$ of curves into $X$ and a set of functions $\mathcal F_X$ on $X$ with the following two properties:
\begin{enumerate}[(a)]
\item A function $f\colon X\to\mathbb R$ is in $\mathcal F_X$ if and only if $f\circ c\in C^\infty(\mathbb R,\mathbb R)$ for all $c\in\mathcal C_X$.
\item A curve $c\colon\mathbb R\to X$ is in $\mathcal C_X$ if and only if $f\circ c\in C^\infty(\mathbb R,\mathbb R)$ for all $f\in\mathcal F_X$.
\end{enumerate}
A map $g\colon X\to Y$ between Fr\"olicher spaces is called \emph{smooth} if $g\circ c\in\mathcal C_Y$ for all $c\in\mathcal C_X$.
Equivalently, smoothness of $g$ can be characterized by $f\circ g\in\mathcal F_X$ for all $f\in\mathcal F_Y$.
Note that $\mathcal C_X$ coincides with the set of smooth curves into $X$, and $\mathcal F_X$ coincides with the set of smooth functions on $X$, provided $\mathbb R$ is equipped with the standard Fr\"olicher structure $\mathcal C_{\mathbb R}=C^\infty(\mathbb R,\mathbb R)=\mathcal F_{\mathbb R}$.
Fr\"olicher spaces and smooth maps between them form a category which is complete, cocomplete and Cartesian closed, see \cite[Theorem~23.2]{KM97}.

Any subset $A$ of a Fr\"olicher space $X$ admits a unique Fr\"olicher structure such that the inclusion $A\subseteq X$ is initial, i.e., a curve into $A$ is smooth iff it is smooth into $X$.
The \emph{$c^\infty$-topology} on a Fr\"olicher space $X$ is the strongest topology such that all smooth curves into $X$ are continuous.
If $\mathcal U$ is a cover of $X$ by $c^\infty$-open subsets, then a function $f$ on $X$ is smooth iff the restriction $f|_U$ is smooth (with respect to the induced Fr\"olicher structure) for all $U\in\mathcal U$.

Any Fr\'echet manifold, together with the usual smooth curves into and smooth functions on it, constitutes a Fr\"olicher space.
More generally, manifolds modeled on $c^\infty$-open subsets of convenient vector spaces \cite[Section~27]{KM97} are Fr\"olicher spaces.
For Fr\'echet manifolds the $c^\infty$-topology coincides with the Fr\'echet topology, see \cite[Theorem~4.11(1)]{KM97}.

We consider $\mathcal G^\rho$ as a Fr\"olicher space with the smooth structure induced from $\mathcal G$.
Hence, a curve in $\mathcal G^\rho$ is smooth iff it is smooth into $\mathcal G$; and a function on $\mathcal G^\rho$ is smooth iff it is smooth along smooth curves.
Moreover, we equip $\mathcal E^\rho/\Diff(S,\rho)$ with the induced Fr\"olicher structure.
Hence, a function on $\mathcal E^\rho/\Diff(S,\rho)$ is smooth iff the corresponding (fiberwise constant) function on $\mathcal E^\rho$ is smooth, with respect to the Fr\"olicher structure on $\mathcal E^\rho$ considered before; and a curve in $\mathcal E^\rho/\Diff(S,\rho)$ is smooth iff it is smooth along smooth functions.
One readily checks that the maps in the commutative diagram
$$
\xymatrix{
\mathcal E^\rho\ar[d]\ar@/^/[dr]^-{q^\rho}\\
\mathcal E^\rho/\Diff(S,\rho)\ar[r]&\mathcal G^\rho
}
$$
are all smooth in the sense of Fr\"olicher spaces, where $q^\rho$ denotes the restriction of $q$.
If smooth curves in $\mathcal G^\rho$ can be lifted to smooth curves in $\mathcal E^\rho$, then the horizontal map provides a diffeomorphism of Fr\"olicher spaces,
$\mathcal E^\rho/\Diff(S,\rho)=\mathcal G^\rho$.

Any subgroup of $\Diff_c(P)$ or $\Diff(S)$ inherits a Fr\"olicher structure from the ambient Lie group, and the group operations are smooth with respect to this Fr\"olicher structure.

\begin{proposition}\label{P:Grho}
If $q^\rho\colon\mathcal E^\rho\to\mathcal G^\rho$ admits local (with respect to the $c^\infty$-topology) smooth sections in the sense of Fr\"olicher spaces, then the following hold true:
\begin{enumerate}[(a)]
\item The map $q^\rho\colon\mathcal E^\rho\to\mathcal G^\rho$ is a locally trivial smooth principal bundle with structure group $\Diff(S,\rho)$ in the sense of Fr\"olicher spaces.
Moreover, the canonical identification $\mathcal E^\rho/\Diff(S,\rho)=\mathcal G^\rho$ is a diffeomorphism of Fr\"olicher spaces.
\item The $\Diff_c(P,\xi)$-action on $\mathcal G^\rho$ admits local (with respect to the $c^\infty$-topology) smooth sections in the sense of Fr\"olicher spaces.
The $\Diff_c(P,\xi)$-orbit through $(N,\gamma)\in\mathcal G^\rho$ is open and closed (with respect to the $c^\infty$-topology) in $\mathcal G^\rho$. 
Denoting this orbit by $\mathcal G^\rho_{(N,\gamma)}$, the map $\Diff_c(P,\xi)\to\mathcal G^\rho_{(N,\gamma)}$ provided by the action is a locally trivial smooth principal bundle with structure group $\Diff_c(P,\xi;(N,\gamma))$ in the sense of Fr\"olicher spaces.
Hence, 
$$
\mathcal G^\rho_{(N,\gamma)}=\Diff_c(P,\xi)/\Diff_c(P,\xi;(N,\gamma))
$$
may be regarded as a homogeneous Fr\"olicher space.
\item The map $J_L^{\mathcal G}$ restricts to a $\Diff_c(P,\xi)$-equivariant smooth bijection 
\begin{equation}\label{E:JLGrho}
J_L^{\mathcal G^\rho}\colon\mathcal G^\rho_{(N,\gamma)}\to\mathfrak X(P,\xi)^*
\end{equation} 
onto the coadjoint orbit of $\Diff_c(P,\xi)$ through $J_L^{\mathcal G}(N,\gamma)$.
\end{enumerate}
\end{proposition}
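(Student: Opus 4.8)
The plan is to deduce all three assertions formally from Theorem~\ref{T:Erho}, Proposition~\ref{jleg}, and the functoriality of the Fr\"olicher category; the standing hypothesis that $q^\rho\colon\mathcal E^\rho\to\mathcal G^\rho$ admits local smooth sections is precisely what compensates for the absence of an inverse function theorem in this setting. I will use throughout that $q\colon\mathcal E\to\mathcal G$ is a smooth principal $\Diff(S)$-bundle, so it carries a smooth division map $\mathcal E\times_{\mathcal G}\mathcal E\to\Diff(S)$, $(\Phi_1,\Phi_2)\mapsto f$ with $\psi^{\mathcal E}_f(\Phi_1)=\Phi_2$, and that, by \eqref{E:Grho}, the subgroup $\Diff(S,\rho)$ leaves $\mathcal E^\rho$ invariant, acts freely on it, and has the fibres of $q^\rho$ as its orbits.

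For (a), given a local smooth section $\sigma\colon U\to\mathcal E^\rho$ of $q^\rho$ over a $c^\infty$-open $U\subseteq\mathcal G^\rho$, I would check that $(x,f)\mapsto\psi^{\mathcal E}_f(\sigma(x))$ is a bijection $U\times\Diff(S,\rho)\to(q^\rho)^{-1}(U)$, smooth because the action is, with inverse $\Phi\mapsto(q^\rho(\Phi),\text{division of }\sigma(q^\rho(\Phi))\text{ and }\Phi)$ smooth because the division map of $q$ is and because $\Diff(S,\rho)\subseteq\Diff(S)$ carries the initial Fr\"olicher structure. This makes $q^\rho$ a locally trivial smooth principal $\Diff(S,\rho)$-bundle. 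Composing a function on $\mathcal G^\rho$ with these sections shows it is smooth as soon as its pull-back along $q^\rho$ is; hence the canonical smooth bijection $\mathcal E^\rho/\Diff(S,\rho)\to\mathcal G^\rho$ identifies the two Fr\"olicher structures and so is a diffeomorphism.

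For (b), I would fix $\Phi\in\mathcal E^\rho$ with $q^\rho(\Phi)=(N,\gamma)$ and, using (a), a local smooth section $\sigma$ of $q^\rho$ near $(N,\gamma)$ with $\sigma(N,\gamma)=\Phi$ (adjusting by an element of $\Diff(S,\rho)$). Theorem~\ref{T:Erho} supplies a local smooth right inverse $\tau$ of $g\mapsto\Psi^{\mathcal E}_g(\Phi)$ near $\Phi$ in $\mathcal E^\rho$; then on a suitable $c^\infty$-open set $\tau\circ\sigma$ is a local smooth section of the $\Diff_c(P,\xi)$-action on $\mathcal G^\rho$, since $\Psi^{\mathcal G}_{\tau(\sigma(x))}(N,\gamma)=q^\rho(\Psi^{\mathcal E}_{\tau(\sigma(x))}(\Phi))=q^\rho(\sigma(x))=x$. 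Existence of such sections forces every $\Diff_c(P,\xi)$-orbit in $\mathcal G^\rho$ to be $c^\infty$-open, hence also $c^\infty$-closed (its complement being a union of orbits); and a local section $s$ of the action, with $\Psi^{\mathcal G}_{s(x)}(N,\gamma)=x$, yields the trivialisation $(x,h)\mapsto s(x)h$ of $\Diff_c(P,\xi)\to\mathcal G^\rho_{(N,\gamma)}$ with smooth inverse $g\mapsto(\Psi^{\mathcal G}_g(N,\gamma),\,s(\Psi^{\mathcal G}_g(N,\gamma))^{-1}g)$, exhibiting it as a locally trivial smooth principal bundle with structure group $\Diff_c(P,\xi;(N,\gamma))$ (with its initial Fr\"olicher structure); the homogeneous-space description follows.

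For (c), I would simply restrict the $\Diff(P,\xi)$-equivariant smooth injection $J_L^{\mathcal G}$ of Proposition~\ref{jleg}(a) to the single orbit $\mathcal G^\rho_{(N,\gamma)}$: equivariance together with transitivity makes its image the coadjoint orbit of $\Diff_c(P,\xi)$ through $J_L^{\mathcal G}(N,\gamma)$, while injectivity and the identification of isotropy groups are Proposition~\ref{jleg}(a)--(b). I expect the only genuinely delicate point to be the bookkeeping in the Fr\"olicher category --- verifying that the quotient $\mathcal E^\rho/\Diff(S,\rho)$, the induced structures on $\mathcal E^\rho$, $\mathcal G^\rho$, $\mathcal G^\rho_{(N,\gamma)}$, and the two principal bundles really behave as in the finite-dimensional case, there being no implicit function theorem available --- which the argument above reduces to the given local sections together with smoothness of the division map of $q$ and of $J_R^{\mathcal E}$ and $J_L^{\mathcal G}$, all of which are in hand.
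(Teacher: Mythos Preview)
Your proposal is correct and follows essentially the same route as the paper's own proof: for (a) you build the local trivialisation from the given section and invert it via the smooth division map of the ambient bundle $q\colon\mathcal E\to\mathcal G$ restricted to $\Diff(S,\rho)$; for (b) you compose a local section of $q^\rho$ with the local right inverse supplied by Theorem~\ref{T:Erho}; and for (c) you invoke Proposition~\ref{jleg}(a). The paper argues identically, though it is terser in (b), simply declaring the principal-bundle structure and the homogeneous-space description ``obvious'' once the local sections for the $\Diff_c(P,\xi)$-action are in hand, whereas you write out the trivialisation and its inverse explicitly.
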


\begin{proof}
Let $\sigma\colon U\to\mathcal E^\rho$ be a local smooth section of $q^\rho\colon\mathcal E^\rho\to\mathcal G^\rho$, defined on a $c^\infty$-open subset $U$ in $\mathcal G^\rho$.
Putting $\mathcal E^\rho_U:=(q^\rho)^{-1}(U)$, we obtain a local trivialization, 
\begin{equation}\label{E:trivialization}
U\times\Diff(S,\rho)\to\mathcal E^\rho_U,\qquad (z,f)\mapsto\psi^{\mathcal E}_f(\sigma(z)).
\end{equation}
Clearly, this is a $\Diff(S,\rho)$-equivariant smooth bijection.
To see that these are actually diffeomorphisms of Fr\"olicher spaces, we use the fact that $\mathcal E\to\mathcal G$ is a smooth principal bundle.
This implies that the map $\tilde\delta\colon\mathcal E\times_{\mathcal G}\mathcal E\to\Diff(S)$, implicitly characterized by $\psi^{\mathcal E}_{\tilde\delta(\Phi_1,\Phi_2)}(\Phi_1)=\Phi_2$ for all $\Phi_1,\Phi_2\in\mathcal E$ with $q(\Phi_1)=q(\Phi_2)\in\mathcal G$, is smooth.
Restricting $\tilde\delta$, we obtain a smooth map $\delta\colon\mathcal E^\rho\times_{\mathcal G^\rho}\mathcal E^\rho\to\Diff(S,\rho)$, which can be used to express the inverse of \eqref{E:trivialization}: $\mathcal E^\rho_U\to U\times\Diff(S,\rho)$, $\Phi\mapsto(q(\Phi),\delta(\sigma(q(\Phi)),\Phi))$.
This shows that the trivialization \eqref{E:trivialization} is a diffeomorphism, whence $\mathcal E^\rho\to\mathcal G^\rho$ is a locally trivial smooth principal fiber bundle.
The remaining assertions in (a) are now obvious.

Using local sections of $\mathcal E^\rho\to\mathcal G^\rho$ and the fact that the $\Diff_c(P,\xi)$-action on $\mathcal E^\rho$ admits local smooth sections, see Theorem~\ref{T:Erho}, we readily see that the $\Diff_c(P,\xi)$-action on $\mathcal G^\rho$ admits local smooth sections.
The remaining assertions in (b) are then obvious.

Part (c) follows at once, see Proposition~\ref{jleg}(a).
\end{proof}

Note that the assumption in Proposition~\ref{P:Grho} is trivially satisfied for $\rho=0$.
This isotropic case will be discussed in Section~\ref{SS:Giso}; and we will obtain a more precise conclusion than formulated in Proposition~\ref{P:Grho} above, see Theorem~\ref{T:Giso}.
In particular, we will show that in this case $\mathcal G^0$ is a smooth submanifold of $\mathcal G$ which inherits a reduced symplectic form from $\mathcal E^0$. 
Moreover, the map in \eqref{E:JLGrho} is a symplectomorphism onto the coadjoint orbit equipped with the Kostant--Kirillov--Souriau form.

For more general $\rho$ (e.g.\ $\rho$ of contact type) the situation is more delicate.
If we equip $\mathcal G^\rho$ with the trace topology induced from $\mathcal G$, then $q$ restricts to principal fiber bundle $(J_R^{\mathcal E})^{-1}(\rho\cdot\Diff(S))/\Diff(S)\to\mathcal G^\rho$ with structure group $\Diff(S)$, see \eqref{E:Grho}.
However, with respect to this topology, the action of $\Diff(P,\xi)$ on $\mathcal G^\rho$ will in general not admit local sections, see Proposition~\ref{P:contactGrho} below.

The next lemma provides a criterion for the premise in Proposition~\ref{P:Grho} above.

\begin{lemma}\label{L:Grhosec}
Let $\rho\in\Omega^1(S,|\Lambda|_S)$ be a $1$-form density and assume that the $\Diff(S)$-action on the orbit through $\rho$ in $\Omega^1(S,|\Lambda|_S)$ admits local (with respect to the $c^\infty$-topology) smooth sections in the sense of Fr\"olicher spaces.
Then the map $\mathcal E^\rho\to\mathcal G^\rho$ admits local (with respect to the $c^\infty$-topology) smooth sections in the sense of Fr\"olicher spaces.
\end{lemma}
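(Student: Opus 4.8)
The plan is to obtain a local section of $q^\rho\colon\mathcal E^\rho\to\mathcal G^\rho$ by first taking a local section of the ambient principal bundle $q\colon\mathcal E\to\mathcal G$ and then normalizing its values by a suitable reparametrization; the hypothesis is precisely what allows this reparametrization to be chosen smoothly. First I would fix $(N_0,\gamma_0)\in\mathcal G^\rho$ and pick $\Phi_0\in\mathcal E^\rho$ with $q(\Phi_0)=(N_0,\gamma_0)$, so that $J_R^{\mathcal E}(\Phi_0)=\rho$. Since $q\colon\mathcal E\to\mathcal G$ is a smooth principal $\Diff(S)$-bundle, there is a $c^\infty$-open neighborhood $V$ of $(N_0,\gamma_0)$ in $\mathcal G$ and a smooth section $\tau\colon V\to\mathcal E$ with $\tau(N_0,\gamma_0)=\Phi_0$. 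For $z\in V\cap\mathcal G^\rho$ we have $q(\tau(z))=z\in\mathcal G^\rho$, hence $\tau(z)\in q^{-1}(\mathcal G^\rho)$, and by \eqref{E:qGrho} this forces $J_R^{\mathcal E}(\tau(z))$ to lie on the coadjoint orbit $\mathcal O_\rho:=\rho\cdot\Diff(S)$; in general, though, $J_R^{\mathcal E}(\tau(z))\neq\rho$, so $\tau(z)$ need not lie in $\mathcal E^\rho$.

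Next I would consider the map $\Theta:=J_R^{\mathcal E}\circ\tau\colon V\cap\mathcal G^\rho\to\mathcal O_\rho$, which is smooth into $\mathcal O_\rho$ because the inclusion $\mathcal O_\rho\hookrightarrow\Omega^1(S,|\Lambda|_S)$ is initial and $J_R^{\mathcal E}$ is smooth into $\Omega^1(S,|\Lambda|_S)$ by Proposition~\ref{P:momentmaps}(b), and which satisfies $\Theta(N_0,\gamma_0)=\rho$. By hypothesis the $\Diff(S)$-action on $\mathcal O_\rho$ admits a local smooth section through $\rho$, so there are a $c^\infty$-open neighborhood $W$ of $\rho$ in $\mathcal O_\rho$ and a smooth map $s\colon W\to\Diff(S)$ with $s(\sigma)^*\rho=\sigma$ for all $\sigma\in W$, normalized so that $s(\rho)=\id_S$. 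I would then set $V':=\Theta^{-1}(W)$, a $c^\infty$-open neighborhood of $(N_0,\gamma_0)$ in $\mathcal G^\rho$ (as $\Theta$ is $c^\infty$-continuous), and define
\[
\tau^\rho\colon V'\to\mathcal E,\qquad \tau^\rho(z):=\psi^{\mathcal E}_{s(\Theta(z))^{-1}}\bigl(\tau(z)\bigr).
\]

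Finally I would verify that $\tau^\rho$ is a local smooth section of $q^\rho$. Smoothness is immediate, since $\tau$, $\Theta$, $s$, inversion in $\Diff(S)$, and the $\Diff(S)$-action on $\mathcal E$ are all smooth. As $q$ is constant along $\Diff(S)$-orbits, $q(\tau^\rho(z))=q(\tau(z))=z$ for $z\in V'$. Using the equivariance $J_R^{\mathcal E}\circ\psi^{\mathcal E}_f=f^*\circ J_R^{\mathcal E}$ recorded in \eqref{E:PgkZMJ}, together with $g^*\circ f^*=(fg)^*$ and $\Theta(z)=s(\Theta(z))^*\rho$, I would compute $J_R^{\mathcal E}(\tau^\rho(z))=\bigl(s(\Theta(z))\,s(\Theta(z))^{-1}\bigr)^*\rho=\rho$, so that $\tau^\rho$ takes values in $\mathcal E^\rho$; since $\mathcal E^\rho$ carries the subspace Fr\"olicher structure, $\tau^\rho$ is then a smooth section of $q^\rho$ over the $c^\infty$-open set $V'$, with $\tau^\rho(N_0,\gamma_0)=\psi^{\mathcal E}_{\id_S}(\Phi_0)=\Phi_0$. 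As $(N_0,\gamma_0)\in\mathcal G^\rho$ was arbitrary, this proves the lemma. I expect the only delicate points to be the passage from smoothness of $J_R^{\mathcal E}$ into the Fr\'echet space $\Omega^1(S,|\Lambda|_S)$ to smoothness into the Fr\"olicher subspace $\mathcal O_\rho$, and the bookkeeping of left/right conventions ensuring that multiplying $\tau(z)$ by $s(\Theta(z))^{-1}$ simultaneously normalizes $J_R^{\mathcal E}$ to $\rho$ and leaves $q$ unchanged; beyond this there is no genuine analytic obstacle, all the substantive transitivity work having been carried out in Theorem~\ref{T:Erho} and in the principal bundle structure of $q\colon\mathcal E\to\mathcal G$.
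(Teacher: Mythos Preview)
Your proposal is correct and follows essentially the same approach as the paper: take a local section of the ambient principal bundle $q\colon\mathcal E\to\mathcal G$, restrict it to $\mathcal G^\rho$ so that its values have $J_R^{\mathcal E}$ lying on the orbit $\mathcal O_\rho$, and then post-compose with the $\Diff(S)$-action by the inverse of the diffeomorphism provided by the hypothesized local section of the orbit map. Your version is in fact slightly more explicit than the paper's (you pin down $\Phi_0$, normalize $s(\rho)=\id_S$, and flag the initiality of $\mathcal O_\rho\hookrightarrow\Omega^1(S,|\Lambda|_S)$), but the argument is the same.
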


\begin{proof}
Since $\mathcal E\to\mathcal G$ admits local sections, each point in $\mathcal G^\rho$ admits an open neighborhood $\tilde U$ in $\mathcal G$ and a smooth section $\tilde\sigma\colon\tilde U\to\mathcal E$ such that $q\circ\tilde\sigma=\id_{\tilde U}$.
Then $U:=\tilde U\cap\mathcal G^\rho$ is a $c^\infty$-open neighborhood, and the restriction $\bar\sigma:=\tilde\sigma|_U$ is a smooth section mapping $\bar\sigma\colon U\to(J^{\mathcal E}_R)^{-1}(\rho\cdot\Diff(S))$, see \eqref{E:qGrho}.
By assumption, after possibly shrinking $U$, there exists a smooth map $f\colon U\to\Diff(S)$ such that $J_R^{\mathcal E}(\bar\sigma(z))=f(z)^*\rho$ for all $z\in U$.
Hence, $\sigma\colon U\to\mathcal E^\rho$, $\sigma(z):=\psi^{\mathcal E}_{f(z)^{-1}}(\bar\sigma(z))$, is the desired local smooth section of $\mathcal E^\rho\to\mathcal G^\rho$.
\end{proof}

\begin{remark}
For a contact $1$-form density, the Gray stability theorem \cite[Theorem~2.2.2]{G08} permits to reformulate the assumption in Lemma~\ref{L:Grhosec}.
More precisely, if $\rho\in\Omega^1(S,|\Lambda|_S)$ is a $1$-form density such that $\ker\rho$ is a contact distribution on $S$, then the following two statements are equivalent:
\begin{enumerate}[(a)]
\item The $\Diff(S)$-action on the $\Diff(S)$-orbit through $\rho$ in $\Omega^1(S,|\Lambda|_S)$ admits local (with respect to the $c^\infty$-topology) smooth sections in the sense of Fr\"olicher spaces.
\item The $\Diff(S,\ker\rho)$-action on the $\Diff(S,\ker\rho)$-orbit through $\rho$ in $\Omega^1(S,|\Lambda|_S)$ admits local (with respect to the $c^\infty$-topology) smooth sections in the sense of Fr\"olicher spaces. 
\end{enumerate}
We do not know if these (equivalent) properties hold true for all contact $1$-form densities.
\end{remark}

\subsection{Weighted isotropic non-linear Grassmannians}\label{SS:Giso}

We will now specialize to the isotropic case, $\rho=0$.
Let us introduce the notation 
\begin{equation}\label{E:Fiso}
\mathcal E^\iso:=
(\pi^{\mathcal E})^{-1}(\Emb^\iso(S,P))=(J_R^{\mathcal E})^{-1}(0)=\mathcal E^0,
\end{equation}
where $\Emb^\iso(S,P)$ denotes the space of isotropic embeddings, cf.~\eqref{E:JRpwv}, \eqref{E:JRCSL}, or \eqref{E:Jalpha}.
This can equivalently be characterized as the elements in $\mathcal E=\Emb_\lin(|\Lambda|_S^*,L^*)$ which restrict to isotropic embeddings $|\Lambda|_S^*\setminus S\to L^*\setminus P=M$.
\footnote{Using a volume density $\mu$ on $S$ to identify $\mathcal L\cong C^\infty(S,L^*)$ as in Remark~\ref{R:mutriv}, the subset $\mathcal E^\iso$ corresponds to $C^\infty(S,M)\cap(\pi^{\mathcal L})^{-1}(\Emb^\iso(S,P))$.
If moreover $\xi=\ker\alpha$, then the corresponding diffeomorphism $\mathcal L\cong C^\infty(S,P)\times C^\infty(S)$ provides an identification $\mathcal E^\iso\cong\Emb^\iso(S,P)\times C^\infty(S,\mathbb R^\times)$.}

Let $\Gr_S^\iso(P)$ denote the space of isotropic submanifolds of type $S$ and consider the space of all weighted isotropic submanifolds of type $S$,
\begin{align}\label{E:Gisodef}
\mathcal G^\iso&:=(\pi^{\mathcal G})^{-1}(\Gr^\iso_S(P))\\
&=\{(N,\gamma)|N\in\Gr_S^\iso(P),\gamma\in\Gamma^\infty(|\Lambda|_N\otimes L^*|_N)\text{ nowhere vanishing}\}.\nonumber
\end{align}
In view of \eqref{E:EGEmbGr} and \eqref{E:Fiso} we have $q^{-1}(\mathcal G^\iso)=\mathcal E^\iso=(J^{\mathcal E}_R)^{-1}(0)$.
Hence, $\mathcal G^\iso$ coincides with the reduced space $\mathcal G^\rho$ for $\rho=0$, i.e.,
\begin{equation}\label{E:Giso}
\mathcal G^0=(J_R^{\mathcal E})^{-1}(0)/\Diff(S)=\mathcal G^\iso=(\pi^{\mathcal G})^{-1}(\Gr^\iso_S(P)).
\end{equation}

\begin{remark}
If $\alpha$ is a contact form for $\xi$, then isotropic submanifolds $N$ are characterized by $\iota_N^*\alpha=0$ and the identification in Remark~\ref{R:Grho} becomes
\begin{equation}\label{gis}
\mathcal G^0=\mathcal G^\iso\cong\bigl\{(N,\nu):\textrm{$N\in\Gr_S^\iso(P)$ and $\nu\in\Gamma^\infty(|\Lambda|_N\setminus N)$}\bigr\}.
\end{equation}
\end{remark}

\begin{lemma}\label{L:GrSisosubmf}
The subset $\Gr_S^\iso(P)$ is a smooth splitting submanifold of $\Gr_S(P)$.
\end{lemma}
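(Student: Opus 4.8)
The plan is to exhibit submanifold charts for $\Gr_S^\iso(P)$ inside $\Gr_S(P)$ that are adapted to the isotropic condition, using the local normal form for isotropic submanifolds of a contact manifold together with the jet‑bundle model already introduced in Section~\ref{S:hs}. Concretely, fix an isotropic submanifold $N_0\in\Gr_S^\iso(P)$. Recall from the standard theory of charts on the non‑linear Grassmannian that a $C^\infty$‑open neighborhood of $N_0$ in $\Gr_S(P)$ is diffeomorphic to a $C^\infty$‑open neighborhood of zero in $\Gamma^\infty(W)$, where $W=TP|_{N_0}/TN_0$ is the normal bundle, a submanifold $N$ near $N_0$ being identified with the section whose graph (in a fixed tubular neighborhood) is $N$. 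The task is to identify, inside $\Gamma^\infty(W)$, the subset corresponding to isotropic deformations, and to show it is a closed linear (or, after a chart change, closed affine/linear) subspace admitting a complement.

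First I would invoke the contact neighborhood theorem for isotropic submanifolds: an isotropic submanifold $N_0$ of $(P,\xi)$ has a tubular neighborhood contactomorphic to a neighborhood of the zero section in a model built from $N_0$, its conformal symplectic normal bundle, and $J^1$‑type data — equivalently, one can realize a neighborhood of $N_0$ in $P$ inside the $1$‑jet prolongation space $J^1(L|_{N_0})$ (the space $J^1 L$ of Section~\ref{S:hs}, restricted over $N_0$), where isotropic sections are precisely $1$‑jets $j^1 h$ of sections $h\in\Gamma^\infty(L|_{N_0})$; this is exactly the defining property of the contact structure on $J^1L$ recalled in the footnote after \eqref{E:Xi}, combined with the Lychagin/L.\ normal form \eqref{E:Xi}. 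Under such an identification, a nearby submanifold $N$ corresponds to a section $s\in\Gamma^\infty((J^1L)|_{N_0})$ (up to reparametrization, i.e.\ composing with a diffeomorphism $S\to N_0$, which does not affect isotropy), and $N$ is isotropic if and only if $s$ has isotropic image, which by \eqref{E:tsiso}/\eqref{E:ts} holds iff $s$ lies in the image of the map $\gamma$ (equivalently $j^1\oplus\id$) from $\Gamma^\infty(L|_{N_0})\oplus\Gamma^\infty(L|_{N_0}\otimes W^*)$. Thus in the chart the isotropic locus corresponds to $\img(\gamma)$.

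Then I would quote directly the fact, already established in the proof of Lemma~\ref{L:submf}, that $\img(\gamma)$ admits a closed complementary subspace in $\Gamma^\infty((J^1L)|_{N_0})$ — there the complement was built from sections of the kernel of $J^1(L|_N)\to L|_N$ together with $\Gamma^\infty(L|_N\otimes W^*)$ via \eqref{E:ts}. Since the chart $\mathcal V\to\mathcal E$ of \eqref{E:chartErho} covers (by composing with $q\colon\mathcal E\to\mathcal G$ and projecting to $\Gr_S(P)$, or more directly by the analogous construction without the density factor) a chart for $\Gr_S(P)$ near $N_0$ in which $\Gr_S^\iso(P)$ corresponds to $\img(\gamma)\cap\mathcal V$, this exhibits $\Gr_S^\iso(P)$ as a splitting submanifold near $N_0$. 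Running over all $N_0\in\Gr_S^\iso(P)$ gives the global statement.

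The main obstacle, and the step requiring the most care, is the passage from the parametrized picture on $\mathcal E$ (where Section~\ref{S:hs} does everything, but in the presence of an extra density/weight coordinate and a reparametrization group) to the unparametrized picture on $\Gr_S(P)$: one must check that the $\Diff(S)$‑reduction is compatible with the isotropic stratum — i.e.\ that $\mathcal E^\iso=\mathcal E^0$ is $\Diff(S)$‑invariant (clear, since $J_R^{\mathcal E}$ is $\Diff(S)$‑equivariant and $0$ is a fixed point) and that the submanifold chart \eqref{E:chartErho} for $\mathcal E^0$ descends, so that $\Gr_S^\iso(P)=q(\mathcal E^\iso)$ inside $\Gr_S(P)$ inherits the splitting‑submanifold structure. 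Equivalently, one may bypass $\mathcal E$ entirely and redo the chart construction of Lemma~\ref{L:submf} directly on $\Gr_S(P)$, dropping the line‑bundle factor $|\Lambda|_S$ throughout; this is routine given the machinery already in place, so I would simply indicate that the argument of Lemma~\ref{L:submf} applies \emph{mutatis mutandis}, with $\rho=0$, after forgetting the weight.
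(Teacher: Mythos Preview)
Your proposal conflates the submanifold chart for $\mathcal E$ built in Lemma~\ref{L:submf} with a chart for $\Gr_S(P)$, and this is where the argument breaks down. The map \eqref{E:chartErho}, $s\mapsto\Phi_s$, parametrizes an open set in $\mathcal E$, not in $\Gr_S(P)$: the model space $\Gamma^\infty((J^1L)|_N)$ has fiber rank $1+\dim P$ over $N$, matching the tangent directions of $\mathcal E$ (locally $C^\infty(S,L^*\setminus P)$, target dimension $\dim P+1$), whereas a chart for $\Gr_S(P)$ is modeled on sections of the normal bundle $W$, of rank $\dim P-\dim S$. Thus many sections $s$ yield the same underlying submanifold, differing only in weight and reparametrization; your assertion that ``a nearby submanifold $N$ corresponds to a section $s\in\Gamma^\infty((J^1L)|_{N_0})$'' is therefore incorrect as stated. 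There is also a conflation of $J^1(L|_{N_0})$ (jets over $N_0$, total dimension $2\dim N_0+1$) with $(J^1L)|_{N_0}$ (restriction to $N_0$ of jets over $P$, total dimension $\dim N_0+\dim P+1$); neither of these models a tubular neighborhood of $N_0$ in $P$ unless $N_0$ happens to be Legendrian.

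You correctly flag the obstacle---one must descend through $\mathcal E\to\Emb(S,P)\to\Gr_S(P)$---but the resolution is not routine in the Fr\'echet setting: knowing that a saturated subset of the total space of a fiber bundle is a splitting submanifold does not automatically yield a splitting submanifold downstairs, and the chart \eqref{E:chartErho} was not built to be compatible with these projections. The paper in fact proceeds in the opposite logical direction: it proves Lemma~\ref{L:GrSisosubmf} directly and then deduces (in the remark following it) that $\Emb^\iso(S,P)$ and $\mathcal E^\iso$ are splitting submanifolds upstairs. The direct argument invokes the contact tubular neighborhood theorem to identify a neighborhood of $N$ in $P$ with a neighborhood of the zero section in $E\oplus T^*N\times\mathbb R$, where $E=TN^\perp/TN$ is the conformal symplectic normal bundle, equipped with the contact form $\alpha=p_1^*\varepsilon+p_2^*\theta+dt$. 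Sections $\sigma=(s,\beta,f)$ of this bundle then give the standard Grassmannian chart at $N$, and the isotropy condition $\sigma^*\alpha=0$ reads $\beta=-s^*\varepsilon-df$, so that $\Gr_S^\iso(P)$ is the graph of a smooth map $\Gamma^\infty(E)\times C^\infty(N)\to\Omega^1(N)$, with evident complement. This is precisely the ``conformal symplectic normal bundle plus $J^1$-type data'' you allude to, but it must be implemented in the tubular model for $P$ itself, not via the machinery of Section~\ref{S:hs}.
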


\begin{proof}
This follows from the tubular neighborhood theorem for contact structures near isotropic submanifolds, see \cite[Theorem~2.5.8]{G08} or \cite[Theorem~1]{L98}.
Since we were not able to locate this statement in the literature, we will sketch a proof in the subsequent paragraph.

Suppose $S\cong N\subseteq P$ is an isotropic submanifold, and let $E:=TN^\perp/TN$ denote its conformal symplectic normal bundle, see \cite[Definition~2.5.3]{G08}.
Using the relative Poincar\'e lemma, one easily constructs a $1$-form $\varepsilon$ on the total space of $E$ such that 
(1) $\varepsilon$ vanishes along the zero section; 
(2) $i_Xd\varepsilon=0$ for every vector $X$ tangent to the zero section; and 
(3) such that $(d\varepsilon)|_N$ represents the conformal symplectic structure on each fiber of $E$, cf.\ the proof of \cite[Proposition in Section~4]{L98}.
Hence $\alpha:=p_1^*\varepsilon+p_2^*\theta+dt$ is a contact form in a neighborhood of the zero section of $E\oplus T^*N\times\mathbb R$, where $p_1$, $p_2$, $t$ denote the canonical projections onto the three summands, and $\theta$ denotes the canonical $1$-form on $T^*N$.
Assuming, for simplicity, that the contact structure on $P$ is coorientable near $N$, the tubular neighborhood theorem for isotropic submanifolds asserts that there exists a contact diffeomorphism $\psi$ between an open neighborhood of the zero section in $E\oplus T^*N\times\mathbb R$ and an open neighborhood of $N$ in $P$ which restricts to the identity along $N$.
Using this diffeomorphism, we obtain a manifold chart for $\Gr_S(P)$ centered at $N$ by assigning to a smooth section $\sigma$ of $E\oplus T^*N\times\mathbb R$, which is sufficiently $C^1$-close to the zero section, the submanifold $\psi(\sigma(N))$ in $P$.
As $\psi$ is contact, the part of $\Gr^\iso_S(P)$ covered by this chart corresponds to sections $\sigma\in\Gamma^\infty(E\oplus T^*N\times\mathbb R)$ such that $\sigma^*\alpha=0$.
Identifying $\Gamma^\infty(E\oplus T^*N\times\mathbb R)=\Gamma^\infty(E)\times\Omega^1(N)\times C^\infty(N)$ and writing $\sigma=(s,\beta,f)$ accordingly, the latter condition is equivalent to $s^*\epsilon+\beta+df=0$.
Hence, $\Gr^\iso_S(P)$ corresponds to the part of the chart domain contained in the splitting linear subspace 
\begin{align*}
\Gamma^\infty(E)\times C^\infty(N)&\subseteq\Gamma^\infty(E)\times\Omega^1(N)\times C^\infty(N)=\Gamma^\infty(E\oplus T^*N\times\mathbb R),
\\
(s,f)&\mapsto(s,-s^*\varepsilon-df,f).
\end{align*}
This shows that $\Gr^\iso_S(P)$ is a splitting smooth submanifold of $\Gr_S(P)$.
\end{proof}

\begin{remark}
Lemma~\ref{L:GrSisosubmf} implies that $\Emb^\iso(S,P)$ is a smooth splitting submanifold of $\Emb(S,P)$, because the natural map $\Emb(S,P)\to\Gr_S(P)$ is a (locally trivial) smooth principal bundle with typical fiber $\Diff(S)$.
Since $\pi^{\mathcal E}\colon\mathcal E\to\Emb(S,P)$ is a (locally trivial) smooth fiber bundle, this also implies that $\mathcal E^\iso$ is a smooth submanifold of $\mathcal E$, see \eqref{E:Fiso}.
Using the isotropic isotopy extension theorem for contact manifolds, see \cite[Theorem~2.6.2]{G08} for instance, one can show that the group $\Diff_c(P,\xi)$ acts locally and infinitesimally transitive on $\mathcal E^\iso$.
Hence, for $\rho=0$, Theorem~\ref{T:Erho} is essentially known.
\end{remark}

As mentioned before, one expects that connected components of $\mathcal G^\iso$, endowed with a reduced symplectic form, are symplectomorphic to coadjoint orbits of $\Diff_c(P,\xi)$ via the restriction of $J_L^{\mathcal G}\colon\mathcal G\to\mathfrak X(P,\xi)^*$.
The following theorem makes this precise.

\begin{theorem}\label{T:Giso}
(a) The subset $\mathcal G^\iso$ is a smooth splitting submanifold of $\mathcal G$.
Moreover, the map provided by the action, $\Diff_c(P,\xi)\to\mathcal G^\iso$, $g\mapsto\Psi^{\mathcal G}_g(N,\gamma)$, admits a local smooth right inverse defined in a neighborhood of $(N,\gamma)$ in $\mathcal G^\iso$.
In particular, the group $\Diff_c(P,\xi)$ acts locally and infinitesimally transitive on $\mathcal G^\iso$, and the $\Diff_c(P,\xi)$-orbit of $(N,\gamma)$ is open and closed in $\mathcal G^\iso$.
Denoting this orbit by $\mathcal G^\iso_{(N,\gamma)}$, the map $\Diff_c(P,\xi)\to\mathcal G^\iso_{(N,\gamma)}$ is a smooth principal bundle with structure group $\Diff_c(P,\xi; (N,\gamma))$ in the sense of Fr\"olicher spaces.
Hence, 
$$
\mathcal G^\iso_{(N,\gamma)}=\Diff_c(P,\xi)/\Diff_c(P,\xi;(N,\gamma))
$$ 
may be regarded as a homogeneous space in the sense of Fr\"olicher spaces.

(b) The projection $q$ restricts to a smooth principal bundle $q^\iso\colon\mathcal E^\iso\to\mathcal G^\iso$ with structure group $\Diff(S)$.
The restriction of the symplectic form $\omega^{\mathcal E}$ to $\mathcal E^\iso$ descends to a (reduced) symplectic form $\omega^{\mathcal G^\iso}$ on $\mathcal G^\iso$.
The $\Diff(P,\xi)$-equivariant injective immersion 
$$
J_L^{\mathcal G^\iso}\colon\mathcal G^\iso\to\mathfrak X(P,\xi)^*,\quad
\langle J_L^{\mathcal G^\iso}(N,\gamma),X\rangle=\int_N\gamma(X|_N),
$$ 
provided by restriction of $J_L^{\mathcal G}$ from \eqref{jlg}, identifies $\mathcal G_{(N,\gamma)}^\iso$ with the coadjoint orbit through $J_L^{\mathcal G}(N,\gamma)$ of the contact group $\Diff_c(P,\xi)$, such that  
\begin{equation}\label{E:KKSiso}
(J_L^{\mathcal G^\iso})^*\omega^\KKS=\omega^{\mathcal G^\iso},
\end{equation}
where $\omega^\KKS$ denotes the Kostant--Kirillov--Souriau symplectic form on the coadjoint orbit through $J_L^{\mathcal G}(N,\gamma)$, cf.\ Remark~\ref{R:KKS} below.
\end{theorem}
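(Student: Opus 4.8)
The plan is to derive part~(a) by combining the structural results of Sections~\ref{S:hs} and~\ref{S:Gr} with Lemma~\ref{L:GrSisosubmf}, and to prove part~(b) by the standard symplectic reduction argument, the one genuinely delicate point being the weak non-degeneracy of the reduced form. For~(a): since $\Gr^\iso_S(P)$ is a splitting submanifold of $\Gr_S(P)$ by Lemma~\ref{L:GrSisosubmf} and $\pi^{\mathcal G}\colon\mathcal G\to\Gr_S(P)$ is a smooth fiber bundle, its preimage $\mathcal G^\iso=(\pi^{\mathcal G})^{-1}(\Gr^\iso_S(P))$ is a splitting submanifold of $\mathcal G$; likewise $\mathcal E^\iso$ is a smooth submanifold of $\mathcal E$, and since $q^{-1}(\mathcal G^\iso)=\mathcal E^\iso$, see \eqref{E:EGEmbGr} and \eqref{E:Fiso}, the restriction $q^\iso\colon\mathcal E^\iso\to\mathcal G^\iso$ is a locally trivial smooth principal $\Diff(S)$-bundle. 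I would then compose a local smooth section of the action map $\Diff_c(P,\xi)\to\mathcal E^\iso$ through $\Phi$, provided by Theorem~\ref{T:Erho} for $\rho=0$, with a local smooth section of $q^\iso$ near $(N,\gamma)=q(\Phi)$, to obtain a local smooth right inverse of $\Diff_c(P,\xi)\to\mathcal G^\iso$, $g\mapsto\Psi^{\mathcal G}_g(N,\gamma)$; the assertions about the orbit being open and closed, the principal bundle structure, and the homogeneous space description then follow formally as in Theorem~\ref{T:Erho}. (Equivalently, (a) is Proposition~\ref{P:Grho} for $\rho=0$, whose hypothesis is then vacuous, together with Lemma~\ref{L:GrSisosubmf}.)

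For the reduced symplectic form in~(b), I would first check that $\omega^{\mathcal E}|_{\mathcal E^\iso}$ is basic with respect to $q^\iso$: it is $\Diff(S)$-invariant since $\omega^{\mathcal E}$ is, and it is horizontal because $i_{\zeta^{\mathcal E}_Z}\omega^{\mathcal E}=-d\langle J_R^{\mathcal E},Z\rangle$ by \eqref{E:dkZM}, which vanishes on $T\mathcal E^\iso=\ker TJ_R^{\mathcal E}$ as $J_R^{\mathcal E}$ is constantly $0$ on $\mathcal E^\iso$. Hence there is a unique closed $2$-form $\omega^{\mathcal G^\iso}$ on $\mathcal G^\iso$ with $(q^\iso)^*\omega^{\mathcal G^\iso}=\omega^{\mathcal E}|_{\mathcal E^\iso}$. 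To show that it is weakly non-degenerate I would identify, for $\Phi\in\mathcal E^\iso$, the kernel of $\omega^{\mathcal E}$ restricted to $T_\Phi\mathcal E^\iso$: this equals $T_\Phi\mathcal E^\iso\cap(T_\Phi\mathcal E^\iso)^\perp=\ker T_\Phi J_R^{\mathcal E}\cap\ker T_\Phi J_L^{\mathcal E}$ by the dual pair identity $(\ker TJ_R^{\mathcal E})^\perp=\ker TJ_L^{\mathcal E}$ of Theorem~\ref{T:dp}; and since $\ker T_\Phi J_L^{\mathcal E}=\{\zeta^{\mathcal E}_Z(\Phi)\mid Z\in\mathfrak X(S)\}$ by \eqref{E:dp2}, with all these vectors tangent to $\mathcal E^\iso$ (the $\Diff(S)$-action preserves the zero level), the kernel is precisely the vertical subspace $\ker T_\Phi q^\iso$. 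Thus $\omega^{\mathcal G^\iso}$ is weakly non-degenerate, hence a (weak) symplectic form.

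It remains to handle the moment map. By Proposition~\ref{jleg}(a) the map $J_L^{\mathcal G}$ is a $\Diff(P,\xi)$-equivariant injective immersion, hence so is its restriction $J_L^{\mathcal G^\iso}$ to the submanifold $\mathcal G^\iso$; by equivariance and part~(a) it carries the orbit $\mathcal G^\iso_{(N,\gamma)}$ bijectively onto the coadjoint orbit of $\Diff_c(P,\xi)$ through $J_L^{\mathcal G}(N,\gamma)$. For \eqref{E:KKSiso} I would pull back along the surjective submersion $q^\iso$ and compute, for $X,Y\in\mathfrak X(P,\xi)$, using infinitesimal equivariance of $J_L^{\mathcal E}$ and the definition of $\omega^\KKS$,
\begin{multline*}
\bigl((J_L^{\mathcal E})^*\omega^\KKS\bigr)\bigl(\zeta^{\mathcal E}_X(\Phi),\zeta^{\mathcal E}_Y(\Phi)\bigr)
=\bigl\langle J_L^{\mathcal E}(\Phi),[X,Y]\bigr\rangle\\
=h^{\mathcal E}_{[X,Y]}(\Phi)
\stackrel{\eqref{E:bracketM}}{=}\omega^{\mathcal E}\bigl(\zeta^{\mathcal E}_X(\Phi),\zeta^{\mathcal E}_Y(\Phi)\bigr),
\end{multline*}
exactly the computation already carried out for $J^M$ in Section~\ref{SS:P}. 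Since the fundamental vectors $\zeta^{\mathcal E}_X(\Phi)$ span $T_\Phi\mathcal E^\iso$ by the infinitesimal transitivity in Theorem~\ref{T:Erho}, this gives $(q^\iso)^*(J_L^{\mathcal G^\iso})^*\omega^\KKS=\omega^{\mathcal E}|_{\mathcal E^\iso}=(q^\iso)^*\omega^{\mathcal G^\iso}$, and \eqref{E:KKSiso} follows because $(q^\iso)^*$ is injective.

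The main obstacle, as is typical in the weakly symplectic category, is the non-degeneracy of the reduced form: no dimension count is available, so one must use \emph{both} symplectic orthogonality identities of the dual pair (Theorem~\ref{T:dp}) to pin down the kernel of $\omega^{\mathcal E}|_{\mathcal E^\iso}$ as exactly the vertical bundle of $q^\iso$. Everything else reduces either to the structural results of Sections~\ref{S:hs}--\ref{S:Gr} or to the same Kostant--Kirillov--Souriau computation already performed for the symplectization.
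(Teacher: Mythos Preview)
Your proposal is correct and follows essentially the same approach as the paper's proof: part~(a) via Lemma~\ref{L:GrSisosubmf}, the fiber bundle $\pi^{\mathcal G}$, and Proposition~\ref{P:Grho} (or equivalently Theorem~\ref{T:Erho}) specialized to $\rho=0$; part~(b) via the dual pair orthogonalities \eqref{E:dp1}--\eqref{E:dp2} for horizontality and non-degeneracy of the reduced form, and \eqref{E:bracketM} for the KKS identity. The only cosmetic difference is that for horizontality the paper argues via $\omega^{\mathcal E}(\zeta^{\mathcal E}_X,\zeta^{\mathcal E}_Z)=0$ together with infinitesimal transitivity of $\Diff_c(P,\xi)$ on $\mathcal E^\iso$, whereas you use the equivalent fact that $i_{\zeta^{\mathcal E}_Z}\omega^{\mathcal E}=-dk^{\mathcal E}_Z$ vanishes on $T\mathcal E^\iso$; both are immediate consequences of the moment map relation.
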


\begin{remark}\label{R:KKS}
To avoid discussing differential forms on coadjoint orbits, we consider the Kostant--Kirillov--Souriau form on the coadjoint orbit through $J_L^{\mathcal G}(N,\gamma)$ as a formal object only.
We actually work with its pull back along $J^{\mathcal G^\iso}_L$, that is, the well defined smooth $2$-form on $\mathcal G^\iso$ characterized by
\begin{equation}\label{E:JKKS}
((J_L^{\mathcal G^\iso})^*\omega^\KKS)(\zeta^{\mathcal G^\iso}_X(N,\gamma),\zeta^{\mathcal G^\iso}_Y(N,\gamma)):=\langle J^{\mathcal G^\iso}_L(N,\gamma),[X,Y]\rangle,
\end{equation}
where $X,Y\in\mathfrak X(P,\xi)$ and $(N,\gamma)\in\mathcal G^\iso$.
To motivate this definition, recall that for a Lie algebra $\mathfrak g$ the Kostant--Kirillov--Souriau symplectic form on the coadjoint orbit through $\lambda\in\mathfrak g^*$ is (formally) given by
\begin{equation*}\label{kks}
\omega^\KKS(\zeta_X^{\mathfrak g^*}(\lambda),\zeta_Y^{\mathfrak g^*}(\lambda))=\langle\lambda,[X,Y]\rangle,
\end{equation*}
where $X,Y\in\mathfrak g$ and $\zeta_X^{\mathfrak g^*}$ denotes the infinitesimal coadjoint action.
Since $J_L^{\mathcal G^\iso}$ is equivariant, we are being lead to \eqref{E:JKKS}.
\end{remark}

\begin{proof}[Proof of Theorem~\ref{T:Giso}]
We have already observed that $\Gr_S^\iso(P)$ is a smooth submanifold of $\Gr_S(P)$, see Lemma~\ref{L:GrSisosubmf}.
Since the forgetful map $\pi^{\mathcal G}\colon\mathcal G\to\Gr_S(P)$ is a smooth fiber bundle, we conclude that $\mathcal G^\iso$ is a smooth submanifold of $\mathcal G$, see \eqref{E:Gisodef}.
In particular, the map provided by the action $\Diff_c(P,\xi)\to\mathcal G^\iso$, $g\mapsto\Psi^{\mathcal G}_g(N,\gamma)$, is smooth.
The remaining assertions in (a) thus follow from Proposition~\ref{P:Grho}(b).
Note that in the isotropic case the assumption in the latter proposition is trivially satisfied.

In view of $\mathcal E^\iso=q^{-1}(\mathcal G^\iso)$, the smooth principal bundle $q\colon\mathcal E\to\mathcal G$ restricts to a smooth principal bundle $q^\iso\colon\mathcal E^\iso\to\mathcal G^\iso$ with structure group $\Diff(S)$.
By Proposition \ref{jleg} the map $J_L^{\mathcal G^\iso}$ is a $\Diff(P,\xi)$-equivariant injective immersion.
In view of (the trivial inclusion in) Equation~\eqref{E:dp2}, we have $\omega^{\mathcal E}(\zeta^{\mathcal E}_X,\zeta^{\mathcal E}_Z)=0$ for all $X\in\mathfrak X(P,\xi)$ and $Z\in\mathfrak X(S)$.
Since $\Diff_c(P,\xi)$ acts infinitesimally transitive on $\mathcal E^\iso$, the $1$-form $\omega^{\mathcal E}(-,\zeta^{\mathcal E}_Z)$, thus, vanishes when pulled back to $\mathcal E^\iso$.
Hence, the restriction of $\omega^{\mathcal E}$ to $\mathcal E^\iso$ is vertical.
We conclude that there exists a unique $2$-form $\omega^{\mathcal G^\iso}$ on $\mathcal G^\iso$ such that $(q^\iso)^*\omega^{\mathcal G^\iso}$ coincides with the pull back of $\omega^{\mathcal E}$ to $\mathcal E^\iso$.
Clearly, $\omega^{\mathcal G^\iso}$ is closed.
The $2$-form $\omega^{\mathcal G^\iso}$ is (weakly) non-degenerate in view of (the non-trivial inclusion in) Equation~\eqref{E:dp2}.
From \eqref{E:JKKS}, \eqref{E:JLdef}, \eqref{E:bracketM} and the equivariance of $q$ we immediately obtain $(q^\iso)^*(J_L^{\mathcal G^\iso})^*\omega^\KKS=(q^\iso)^*\omega^{\mathcal G^\iso}$, whence \eqref{E:KKSiso}.
The remaining assertions are now obvious.
\end{proof}

\begin{remark}
We expect that the isotropy group $\Diff_c(P,\xi;(N,\gamma))$ in Theorem~\ref{T:Giso}(a) is a closed Lie subgroup in $\Diff_c(P,\xi)$.
If this is the case then $\mathcal G^\iso_{(N,\gamma)}$ may be regarded as a homogeneous space in the category of smooth manifolds.
\end{remark}

\begin{example}
If $S$ is the circle $S^1$ and $P$ is a $3$-dimensional contact manifold, then the weighted non-linear Grassmannian $\mathcal G$ becomes the manifold of weighted (unparametrized) knots in $P$, and $\mathcal G^\iso$ is the (symplectic) manifold of weighted Legendrian knots in $P$.
By Theorem~\ref{T:Giso}, its connected components can be identified with coadjoint orbits of the identity component of the contact group.
\end{example}

\subsection{Weighted contact non-linear Grassmannians}\label{SS:Gcontact}

Let us now consider a $1$-form density $\rho\in\Omega^1(S,|\Lambda|_S)$ of contact type, i.e., $\ker\rho\subseteq TS$ is assumed to be a contact hyperplane distribution.
Then the reduced space $\mathcal G^\rho$, see \eqref{E:Grho}, consists of weighted contact submanifolds. 
More precisely, according to Lemma~\ref{L:Grho} we have
\begin{equation}\label{E:Gcontact}
\mathcal G^\rho\subseteq(\pi^{\mathcal G})^{-1}(\Gr^\contact_{(S,\ker\rho)}(P,\xi)),
\end{equation}
where $\Gr_{(S,\ker\rho)}^\contact(P,\xi)\subseteq\Gr_S(P)$ denotes the subset of contact submanifolds which are of type $(S,\ker\rho)$.
In contrast to the isotropic case, see \eqref{E:Giso}, the inclusion \eqref{E:Gcontact} is strict.

The maps in \eqref{E:EGEmbGr} restrict to a $\Diff(P,\xi)$-equivariant commutative diagram
\begin{equation}\label{E:EGEmbGrrho}
\vcenter{\xymatrix{
\mathcal E^\rho\ar[d]_-{q^\rho}\ar[rr]_-\cong^-{\pi^{\mathcal E^\rho}}&&\Emb^\contact_{(S,\ker\rho)}(P,\xi)\ar[d]\\
\mathcal G^\rho\ar[rr]^-{\pi^{\mathcal G^\rho}}&&\Gr^\contact_{(S,\ker\rho)}(P,\xi)
}}
\end{equation}
where $\Emb^\contact_{(S,\ker\rho)}(P,\xi)\subseteq\Emb(S,P)$ denotes the subset of contact embeddings inducing the contact structure $\ker\rho$ on $S$.

\begin{lemma}\label{L:Econtactsubmf}
If $\rho\in\Omega^1(S,|\Lambda|_S)$ is a contact $1$-form density, then the following hold true:
\begin{enumerate}[(a)]
\item $\Gr^\contact_{(S,\ker\rho)}(P,\xi)$ is an open subset of $\Gr_S(P)$.
\item $\Emb^\contact_{(S,\ker\rho)}(P,\xi)$ is an initial Fr\'echet submanifold of $\Emb(S,P)$.
\item The natural map 
$$
\Emb^\contact_{(S,\ker\rho)}(P,\xi)\to\Gr^\contact_{(S,\ker\rho)}(P,\xi)
$$
is a smooth principal bundle with structure group $\Diff(S,\ker\rho)$. 
\item
The natural map
$$
\mathcal L|_{\Emb^\contact_{(S,\ker\rho)}(P,\xi)}\xrightarrow{(\pi^{\mathcal L},J_R^{\mathcal L})}\Emb^\contact_{(S,\ker\rho)}(P,\xi)\times\Gamma^\infty\bigl((TS/\ker\rho)^*\otimes|\Lambda|_S\bigr)
$$
		is a diffeomorphism of Fr\'echet manifolds, providing a $\Diff(S,\ker\rho)$-equivariant trivialization of the bundle $\pi^{\mathcal L}\colon\mathcal L\to C^\infty(S,P)$ over $\Emb^\contact_{(S,\ker\rho)}(P,\xi)$.
\item The map $\pi^{\mathcal E}\colon\mathcal E\to\Emb(S,P)$ restricts to a diffeomorphism of Fr\'echet manifolds, 
$$
\mathcal E^\rho\cong\Emb^\contact_{(S,\ker\rho)}(P,\xi).
$$
\end{enumerate}
\end{lemma}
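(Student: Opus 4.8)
The plan is to establish the five assertions in the order listed; the geometric substance lies in (d) and (e), the soft and analytic work in (a)--(c). For (a), being a \emph{contact} submanifold of $P$ --- i.e.\ having $TN$ transverse to $\xi$, so that $\xi|_N\cap TN$ is a hyperplane field --- is plainly an open condition on $N\in\Gr_S(P)$. Working in a chart of $\Gr_S(P)$ around a contact submanifold $N_0$ of type $(S,\ker\rho)$ and transporting the induced contact structures of the nearby $N$ to $S$ via reference parametrizations, one obtains a smooth family $\eta_N$ of contact structures on $S$ with $\eta_{N_0}\cong\ker\rho$; for $N$ close to $N_0$ the structure $\eta_N$ can be joined to $\eta_{N_0}$ by a path of contact structures, hence is isomorphic to it by the Gray stability theorem \cite[Theorem~2.2.2]{G08}. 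Thus $\Gr^\contact_{(S,\ker\rho)}(P,\xi)$ is open in $\Gr_S(P)$.

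Parts (b) and (c) are the main obstacle. The key is to produce local smooth sections of $\Emb^\contact_{(S,\ker\rho)}(P,\xi)\to\Gr^\contact_{(S,\ker\rho)}(P,\xi)$. Starting from a local section $\sigma_0$ of the principal $\Diff(S)$-bundle $\Emb(S,P)\to\Gr_S(P)$ near $N_0$, normalized so that $\sigma_0(N_0)$ induces $\ker\rho$, the contact structures $\eta_N:=\sigma_0(N)^*(\xi|_N\cap TN)$ form a smooth family on $S$ with $\eta_{N_0}=\ker\rho$, and a parametrized version of Gray stability (Moser's trick along an interpolating path, with $N$ as parameter, using the regularity of $\Diff(S)$) yields a smooth map $N\mapsto g_N\in\Diff(S)$ with $g_{N_0}=\id$ and $g_N^*\eta_N=\ker\rho$; then $\sigma(N):=\sigma_0(N)\circ g_N$ is the desired section. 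Using that $\Diff(S,\ker\rho)$ is an initial Lie subgroup of $\Diff(S)$ (the contactomorphism group of the closed contact manifold $(S,\ker\rho)$), that $\Gr^\contact_{(S,\ker\rho)}(P,\xi)$ is open in $\Gr_S(P)$ by (a), and that the division map $\Emb(S,P)\times_{\Gr_S(P)}\Emb(S,P)\to\Diff(S)$ restricts to a smooth map into $\Diff(S,\ker\rho)$ on contact embeddings inducing $\ker\rho$ (two such with the same image differ by an element of $\Diff(S,\ker\rho)$), one obtains (c): the natural map $\Emb^\contact_{(S,\ker\rho)}(P,\xi)\to\Gr^\contact_{(S,\ker\rho)}(P,\xi)$ is a locally trivial smooth principal $\Diff(S,\ker\rho)$-bundle. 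Then (b) follows, since in the resulting local trivializations $\Emb^\contact_{(S,\ker\rho)}(P,\xi)$ corresponds to (a chart of $\Gr^\contact_{(S,\ker\rho)}(P,\xi)$)$\,\times\,\Diff(S,\ker\rho)$ sitting inside (that chart)$\,\times\,\Diff(S)$, and $\Gr^\contact_{(S,\ker\rho)}(P,\xi)$ is open in $\Gr_S(P)$. I expect the parametrized Gray argument, together with the regularity of $\Diff(S,\ker\rho)$, to be the delicate point.

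For (d), I would argue fibrewise. If $\varphi\in\Emb^\contact_{(S,\ker\rho)}(P,\xi)$, then the composite $T_xS\xrightarrow{T_x\varphi}T_{\varphi(x)}P\to L_{\varphi(x)}$ has kernel exactly $\ker\rho_x$, hence descends to an isomorphism $\tau_x\colon T_xS/\ker\rho_x\xrightarrow{\ \sim\ }L_{\varphi(x)}$ depending smoothly on $\varphi$. Using $J_R^{\mathcal L}(\Phi)=\Phi^*\theta^{L^*}$ together with \eqref{E:JRpw}, one computes that on the fibre $\mathcal L_\varphi=\Gamma^\infty(|\Lambda|_S\otimes\varphi^*L^*)$ the map $J_R^{\mathcal L}$ is linear and, over each $x\in S$, equals $\id_{|\Lambda|_{S,x}}\otimes\,\tau_x^*\colon|\Lambda|_{S,x}\otimes L^*_{\varphi(x)}\to|\Lambda|_{S,x}\otimes(T_xS/\ker\rho_x)^*$, which is an isomorphism since $\tau_x^*$ is. Hence $(\pi^{\mathcal L},J_R^{\mathcal L})$ is a smooth vector bundle isomorphism over $\Emb^\contact_{(S,\ker\rho)}(P,\xi)$, and it is $\Diff(S,\ker\rho)$-equivariant by the equivariance of $\pi^{\mathcal L}$ and of $J_R^{\mathcal L}$, cf.\ \eqref{E:PgkZMJ}.

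Finally, for (e) I assemble the pieces. Given $\Phi\in\mathcal E^\rho=(J_R^{\mathcal E})^{-1}(\rho)$, put $\varphi=\pi^{\mathcal E}(\Phi)$ and $(N,\gamma)=q(\Phi)$; the commutative diagram in the proof of Lemma~\ref{L:Grho} gives $\varphi^*(\iota_N^*\gamma)=J_R^{\mathcal E}(\Phi)=\rho$, and since $\ker\rho$ is contact while $\ker(\iota_N^*\gamma)=\xi|_N\cap TN$, this forces $N$ to be a contact submanifold and $\varphi\colon(S,\ker\rho)\to(N,\xi|_N\cap TN)$ a contactomorphism, i.e.\ $\varphi\in\Emb^\contact_{(S,\ker\rho)}(P,\xi)$. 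Thus $\mathcal E^\rho\subseteq(\pi^{\mathcal E})^{-1}\bigl(\Emb^\contact_{(S,\ker\rho)}(P,\xi)\bigr)$, which coincides with $\mathcal M$ (hence with $\mathcal E$) over that base, cf.\ the footnote to~\eqref{eee}, so $\mathcal E^\rho$ is contained in $\mathcal L|_{\Emb^\contact_{(S,\ker\rho)}(P,\xi)}$. Under the trivialization of (d), $\mathcal E^\rho$ is cut out by $J_R^{\mathcal L}=\rho$; since $\rho$ is nowhere vanishing and $J_R^{\mathcal L}$ is a fibrewise isomorphism on each $\mathcal L_\varphi$, every $\Phi$ in this slice is fibrewise nonzero, hence lies in $\mathcal M=\mathcal E$. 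Therefore $\mathcal E^\rho$ equals the slice, which (d) identifies with $\Emb^\contact_{(S,\ker\rho)}(P,\xi)\times\{\rho\}$, and under this identification $\pi^{\mathcal E}|_{\mathcal E^\rho}$ is the projection onto the first factor, hence a diffeomorphism of Fréchet manifolds $\mathcal E^\rho\cong\Emb^\contact_{(S,\ker\rho)}(P,\xi)$ (the resulting smooth structure on $\mathcal E^\rho$ agreeing with that of Lemma~\ref{L:submf}). Injectivity of $\pi^{\mathcal E}|_{\mathcal E^\rho}$ is also visible directly: by homogeneity of $\theta^{L^*}$ one has $J_R^{\mathcal E}(h\cdot\Phi)=h\cdot J_R^{\mathcal E}(\Phi)$ for $h\in C^\infty(S,\mathbb R^\times)$, cf.\ \eqref{E:JRpwv}, so each fibre of $\pi^{\mathcal E}$ meets $\mathcal E^\rho$ in at most one point.
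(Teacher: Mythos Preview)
Your argument is correct and follows essentially the same route as the paper: Gray stability for (a)--(c), the fibrewise isomorphism $\tau\colon TS/\ker\rho\to\varphi^*L$ for (d), and then restricting (d) to the slice $J_R^{\mathcal L}=\rho$ for (e). The only noteworthy difference is in (d): where you invoke the smooth dependence of the line-bundle isomorphism $\tau$ (and hence of $\tau^{-1}$) on $\varphi$ to conclude that the inverse is smooth, the paper instead fixes a splitting $\sigma\colon TS/\ker\rho\to TS$, extends the construction to the open set $\mathcal W\subseteq\Emb(S,P)$ where $TS/\ker\rho\xrightarrow{\sigma}TS\xrightarrow{T\varphi}\varphi^*TP\to\varphi^*L$ is an isomorphism, and writes down the inverse explicitly there. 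Both arguments are valid; the paper's has the small advantage of sidestepping any discussion of smoothness \emph{into} the initial submanifold $\Emb^\contact_{(S,\ker\rho)}(P,\xi)$, since the inverse is produced on an honest open subset of $\Emb(S,P)$.
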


\begin{proof}
(a) follows from the Gray stability theorem, see \cite[Theorem~2.2.2]{G08}.
Locally around points in $\Gr^\contact_{(S,\rho)}(P,\xi)$, the Gray stability theorem permits to construct cross sections of the $\Diff(S)$-bundle $\Emb(S,P)\to\Gr_S(P)$ which take values in $\Emb^{\contact}_{(S,\ker\rho)}(P,\xi)$.
Such a local cross section, defined on an open subset $\mathcal U$ in $\Gr_S(P)$, provides a local trivialization of $\Diff(S)$-bundles, $\mathcal U\times\Diff(S)\cong\Emb(S,P)|_{\mathcal U}$, which maps $\mathcal U\times\Diff(S,\ker\rho)$ onto $\Emb^\contact_{(S,\ker\rho)}(P,\xi)|_{\mathcal U}$.
Recall that $\Diff(S,\ker\rho)$ is a Fr\'echet Lie group, and the natural inclusion into $\Diff(S)$ is initial, see \cite[Theorem~43.19]{KM97}.
Whence (b) and (c).

Since $\rho$ is nowhere vanishing, the map in (d) is a bijection.
This map is smooth because the inclusion $\Emb^\contact_{(S,\ker\rho)}(P,\xi)\subseteq\Emb(S,P)$ is initial.
To see that its inverse is smooth too, we fix a vector bundle homomorphism $\sigma\colon TS/\ker\rho\to TS$ splitting the canonical projection $TS\to TS/\ker\rho$.
Let $\mathcal W$ denote the set of embeddings $\varphi\in\Emb(S,P)$ for which the composition
$$
TS/\ker\rho\xrightarrow\sigma TS\xrightarrow{T\varphi}\varphi^*TP\to\varphi^*L
$$
is an isomorphism of line bundles over $S$.
Clearly, $\mathcal W$ is an open neighborhood of $\Emb^\contact_{(S,\ker\rho)}(P,\xi)$ in $\Emb(S,P)$.
We obtain a smooth map 
$$
s\colon\mathcal W\times\Gamma^\infty\bigl((TS/\ker\rho)^*\otimes|\Lambda|_S\bigr)\to\mathcal L,
$$ 
characterized by $\pi^{\mathcal L}(s(\varphi,\beta))=\varphi$ and $J_R^{\mathcal L}(s(\varphi,\beta))\circ\sigma=\beta$, for all $\varphi\in\mathcal W$ and $\beta\in\Gamma^\infty\bigl((TS/\ker\rho)^*\otimes|\Lambda|_S\bigr)$.
Its restriction provides the smooth inverse for the map in (d).

Restricting the diffeomorphism in (d) to the level set $\mathcal E^\rho$, we obtain a diffeomorphism $\mathcal E^\rho\cong\Emb^\contact_{(S,\ker\rho)}(P,\xi)\times\{\rho\}$, whence (e).
\end{proof}

The diffeomorphism in Lemma~\ref{L:Econtactsubmf}(e) induces a natural diffeomorphism of Fr\"olicher spaces:
$$
\mathcal E^\rho/\Diff(S,\rho)\cong\Emb^\contact_{(S,\ker\rho)}(P,\xi)\times_{\Diff(S,\ker\rho)}\frac{\Diff(S,\ker\rho)}{\Diff(S,\rho)}.
$$
Note that the isotropy group $\Diff(S,\rho)$ is akin to the group of strict contact diffeomorphisms.

The diffeomorphism in Lemma~\ref{L:Econtactsubmf}(d) induces a diffeomorphism
$$
\mathcal G|_{\Gr^\contact_{(S,\ker\rho)}(P,\xi)}\cong\Emb^\contact_{(S,\ker\rho)}(P,\xi)\times_{\Diff(S,\ker\rho)}\Gamma^\infty\bigl(((TS/\ker\rho)^*\otimes|\Lambda|_S)\setminus S\bigr)
$$
which restricts to a natural diffeomorphism of Fr\"olicher spaces,
\begin{equation}\label{E:GrhoOrho}
\mathcal G^\rho\cong\Emb^\contact_{(S,\ker\rho)}(P,\xi)\times_{\Diff(S,\ker\rho)}\mathcal O_\rho.
\end{equation}
Here $\mathcal G^\rho$ is equipped with the Fr\"olicher structure induced from $\mathcal G$, and $\mathcal O_\rho$ denotes the $\Diff(S,\ker\rho)$-orbit of $\rho$ equipped with the Fr\"olicher structure induced from $\Omega^1(S,|\Lambda|_S)$ which coincides with the Fr\"olicher structure induced from $\Gamma^\infty((TS/\ker\rho)^*\otimes|\Lambda|_S)$.

\begin{remark}
If $\alpha$ is a contact form for $\xi$, then contact submanifolds $N$ are characterized by the fact that $\iota_N^*\alpha$ is a contact form on $N$, and the identification in Remark~\ref{R:Grho} becomes
$$
\mathcal G^\rho=\bigl\{(N,\nu):\textrm{$N\in\Gr_S^\contact(P)$,
$\nu\in\Gamma^\infty(|\Lambda|_N\setminus N)$, and $(N,\iota_N^*\alpha\otimes\nu)\cong(S,\rho)$}\bigr\}.
$$
If $(N,\nu)\in\mathcal G^\rho$ then any other weight on $N$ allowed in $\mathcal G^\rho$ must be of the form
$$
\frac{f^*\iota_N^*\alpha}{\iota_N^*\alpha}\cdot\frac{f^*\nu}\nu\cdot\nu
$$ 
for a contact diffeomorphism $f\in\Diff(N,\ker\iota_N^*\alpha)$.
Thus, unlike the isotropic case \eqref{gis}, in the contact case not all weights on a contact submanifold $N\in\Gr^\contact_S(P)$ are allowed in $\mathcal G^\rho$, i.e., the inclusion in \eqref{E:Gcontact} is strict.
\end{remark}

\begin{remark}
Let $\rho\in\Omega^1(S,|\Lambda|_S)$ be a contact $1$-form density.
Since $\mathcal G^\rho$ may not be a manifold, we refrain from considering the Kostant--Kirillov--Souriau form on $\mathcal G^\rho$.
However, formally pulling back the Kostant--Kirillov--Souriau form along $J^{\mathcal E^\rho}_L\colon\mathcal E^\rho\to\mathfrak X(P,\xi)^*$, we obtain a well defined smooth $2$-form $(J^{\mathcal E^\rho}_L)^*\omega^\KKS$ on $\mathcal E^\rho$, characterized by
$$
((J^{\mathcal E^\rho}_L)^*\omega^\KKS)(\zeta^{\mathcal E^\rho}_X(\Phi),\zeta^{\mathcal E^\rho}_Y(\Phi)):=\langle J_L^{\mathcal E^\rho}(\Phi),[X,Y]\rangle,
$$
where $\Phi\in\mathcal E^\rho$ and $X,Y\in\mathfrak X(P,\xi)$, cf.\ Remark~\ref{R:KKS} and Theorem~\ref{T:Erho}.
Proceeding exactly as in the proof of Theorem~\ref{T:Giso}, we see that this coincides with $\omega^{\mathcal E^\rho}$, the pull back of the symplectic form $\omega^{\mathcal E}$ to $\mathcal E^\rho$, i.e.,
$$
(J^{\mathcal E^\rho}_L)^*\omega^\KKS=\omega^{\mathcal E^\rho}.
$$
\end{remark}

The discussion in the next example shows that the situation is as nice as one could wish for $1$-dimensional $S$.
Subsequently, we will see that the situation is considerably more delicate in general, see Proposition~\ref{P:contactGrho}.

\begin{example}\label{Ex:S1}
Let us specialize to the circle, $S=S^1$.
In this case, any contact $1$-form density $\rho\in\Omega^1(S,|\Lambda|_S)$ gives rise to an orientation and a Riemannian metric on $S$.
We write $\sqrt{|\rho|}$ for the induced volume density on $S$, and denote the total volume by $\vvol(\rho):=\int_S\sqrt{|\rho|}$.
Using parametrization by arc length it is easy to see that two contact $1$-form densities lie in the same $\Diff(S)$-orbit iff they have the same total volume.
In particular, the $\Diff(S)$-orbits through contact $1$-form densities are closed submanifolds in $\Omega^1(S,|\Lambda|_S)$.
Moreover, parametrization by arc length provides local smooth sections for the $\Diff(S)$-action on these orbits.
In particular, the assumption in Lemma~\ref{L:Grhosec} is satisfied in this case.

Suppose $(P,\xi)$ is a contact manifold and let $\rho\in\Omega^1(S,|\Lambda|_S)$ be a contact $1$-form density on $S=S^1$.
Using \eqref{E:GrhoOrho} we conclude that $\mathcal G^\rho$ is a closed submanifold of $\mathcal G$.
Parametrization by arc length provides local smooth sections of $\mathcal E^\rho\to\mathcal G^\rho$ and the latter is a locally trivial smooth principal bundle.
Note that the structure group $\Diff(S,\rho)\cong\operatorname{SO}(1)$ is a closed Lie subgroup of $\Diff(S)$.
By Proposition~\ref{P:Grho}, the $\Diff_c(P,\xi)$-action on $\mathcal G^\rho$ admits local smooth sections.
	Moreover, its orbits are open and closed subsets in $\mathcal G^\rho$ which may be identified with coadjoint orbits of the contact group via the restriction of $J^{\mathcal G}_L$.
The symplectic form on $\mathcal E$ gives rise to a reduced symplectic form on $\mathcal G^\rho$ which coincides with the pull back of the Kostant--Kirillov--Souriau symplectic form via $J_L^{\mathcal G}$ as in Theorem~\ref{T:Giso}(b).
If $P$ is $3$-dimensional, then $\mathcal G^\rho$ is a (symplectic) manifold of weighted transverse knots.

A slightly more explicit description can be given if the contact structure is admits a contact form, $\xi=\ker\alpha$.
Then, via the identification in Remark~\ref{R:Grho}, we have
$$
\mathcal G^\rho\cong\left\{(N,\nu)\in\Gr^\wt_{S^1}(P)\middle|\iota_N^*\alpha\neq0,\ \vvol(\iota_N^*\alpha\otimes\nu)=\vvol(\rho)\right\},
$$
for every contact $1$-form density $\rho$.
\end{example}

The following result shows that the trace topology on $\mathcal G^\rho$ induced from $\mathcal G$ is not the appropriate topology for general contact $\rho$.

\begin{proposition}\label{P:contactGrho}
There exist a compact contact manifold $(P,\xi)$, a compact manifold $S$, and a contact $1$-form density $\rho\in\Omega^1(S,|\Lambda|_S)$ such that the continuous bijection
\begin{equation}\label{E:cb}
\mathcal E^\rho/\Diff(S,\rho)\to(J_R^{\mathcal E})^{-1}(\rho\cdot\Diff(S))/\Diff(S)
\end{equation}
induced by the natural inclusion is not a homeomorphism with respect to the quotient topologies.
In particular, the continuous bijection $\mathcal E^\rho/\Diff(S,\rho)\to\mathcal G^\rho$ induced by $q$ is not a homeomorphism where the right hand side is equipped with the trace topology induced from $\mathcal G$.
Moreover, for $(N,\gamma)\in\mathcal G^\rho$ the map provided by the action, $\Diff(P,\xi)\to\mathcal G^\rho$, $g\mapsto\Psi^{\mathcal G}_g(N,\gamma)$, does not admit a continuous local (with respect to the trace topology induced from $\mathcal G$) right inverse defined in a neighborhood of $(N,\gamma)$.
\end{proposition}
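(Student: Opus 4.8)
The plan is to reduce the three assertions to a single statement about the topology of one orbit of weights, and then to exhibit a contact manifold $S$ for which that orbit is not locally closed. The first two assertions in fact coincide: by \eqref{E:qGrho} we have $(J_R^{\mathcal E})^{-1}(\rho\cdot\Diff(S))=q^{-1}(\mathcal G^\rho)$, and since $q\colon\mathcal E\to\mathcal G$ is a principal bundle, its restriction $q^{-1}(\mathcal G^\rho)\to\mathcal G^\rho$ onto $\mathcal G^\rho$ with its trace topology is a continuous open surjection, hence a topological quotient map, so the quotient topology on $(J_R^{\mathcal E})^{-1}(\rho\cdot\Diff(S))/\Diff(S)$ agrees with the trace topology on $\mathcal G^\rho$. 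Combining the diffeomorphism $\mathcal E^\rho\cong\Emb^\contact_{(S,\ker\rho)}(P,\xi)$ of Lemma~\ref{L:Econtactsubmf}(e) with \eqref{E:GrhoOrho} and the local triviality of the $\Diff(S,\ker\rho)$-bundle of Lemma~\ref{L:Econtactsubmf}(c), the continuous bijection $\mathcal E^\rho/\Diff(S,\rho)\to\mathcal G^\rho$ is, locally over $\Gr^\contact_{(S,\ker\rho)}(P,\xi)$, the product of an identity with the canonical continuous bijection $\Diff(S,\ker\rho)/\Diff(S,\rho)\to\mathcal O_\rho\subseteq\Omega^1(S,|\Lambda|_S)$ from the homogeneous-space quotient topology to the trace topology of the $\Diff(S,\ker\rho)$-orbit of $\rho$; hence it suffices to find $(S,\ker\rho)$ for which this last bijection fails to be a homeomorphism.

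For that I would invoke the Effros dichotomy for Polish group actions: $\Diff(S)$ is Polish, $\Diff(S,\ker\rho)$ is a closed subgroup (hence Polish), $\Omega^1(S,|\Lambda|_S)$ is a separable Fr\'echet space, hence a Polish space, and $\Diff(S,\ker\rho)$ acts on it continuously by pull-back with closed isotropy group $\Diff(S,\rho)$ at $\rho$; by the Effros theorem the orbit map $\Diff(S,\ker\rho)/\Diff(S,\rho)\to\mathcal O_\rho$ is a homeomorphism if and only if $\mathcal O_\rho$ is non-meagre in its closure, equivalently, $G_\delta$ in $\Omega^1(S,|\Lambda|_S)$, equivalently locally closed. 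Writing $\rho=\beta\otimes\mu$ with $\beta$ a contact form for $\ker\rho$ and $\mu=\beta\wedge(d\beta)^{(\dim S-1)/2}$ its contact volume density, every $f$ in the identity component of $\Diff(S,\ker\rho)$ satisfies $f^*\beta=a_f\beta$ with $a_f\in C^\infty(S,\mathbb R_{>0})$, whence $f^*\rho=a_f^{\,m}\rho$ with $m=(\dim S+3)/2$; since $a\mapsto a^m$ is a homeomorphism of $C^\infty(S,\mathbb R_{>0})$, the orbit $\mathcal O_\rho$ is homeomorphic to the set of conformal factors $\mathcal A=\{a_f\}$, and it is therefore enough to find $(S,\ker\rho)$ for which $\mathcal A$ is not locally closed in $C^\infty(S,\mathbb R_{>0})$.

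As the example I would take $S=S^3$ with $\beta$ the contact form on the boundary of an irrational ellipsoid $E(a_1,a_2)$ whose ratio $a_1/a_2$ is a Liouville number; then $\ker\rho=\ker\beta$ is the standard contact structure on $S^3$, and the Reeb flow of $\beta$ has exactly two closed orbits and otherwise foliates $S^3$ by invariant $2$-tori on which it is a linear flow of slope $a_1/a_2$. The infinitesimal model of $\mathcal A$ at $\beta$ is the image of the Reeb derivative $R_\beta\colon C^\infty(S^3)\to C^\infty(S^3)$, which restricted to an invariant torus acts, Fourier mode by Fourier mode, as multiplication by $2\pi i\bigl(p/a_1+q/a_2\bigr)$; the Liouville condition then produces arbitrarily small divisors, so that $\operatorname{im}(R_\beta)$ is dense but not closed in $C^\infty(S^3)$. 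The remaining step, which I expect to be the main obstacle, is to promote this linear pathology to the nonlinear statement that $\mathcal A$ is not locally closed: concretely, to construct contactomorphisms $f_n$ of $(S^3,\ker\beta)$ with $f_n^*\rho\to\rho$ in the $C^\infty$ topology while the cosets $f_n\Diff(S^3,\rho)$ stay outside a fixed neighbourhood of the identity coset, so that $\mathcal O_\rho$ is not open in its closure. This requires controlling the higher order terms in the conformal factor of a composition of contactomorphisms, a KAM-type estimate; alternatively one may argue through the period spectrum of the Reeb flow of $a_f\beta$, a $\Diff(S^3,\ker\beta)$-invariant that, because of the Liouville condition, varies through a non-closed constraint.

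The third assertion follows formally from the failure in the second one. If $\Diff(P,\xi)\to\mathcal G^\rho$, $g\mapsto\Psi^{\mathcal G}_g(N,\gamma)$, admitted a continuous local right inverse $\sigma$ near $(N,\gamma)$, then, choosing $\Phi_0\in\mathcal E^\rho$ with $q(\Phi_0)=(N,\gamma)$ and using that $\mathcal E^\rho$ is $\Diff(P,\xi)$-invariant (Proposition~\ref{P:momentmaps}(b)) and that the $\Diff(P,\xi)$- and $\Diff(S)$-actions commute, the map $z\mapsto[\Psi^{\mathcal E}_{\sigma(z)}(\Phi_0)]$ would be a continuous local section of the bijection $\mathcal E^\rho/\Diff(S,\rho)\to\mathcal G^\rho$ near $(N,\gamma)$; by $\Diff(P,\xi)$-equivariance of this bijection and the local transitivity of $\Diff(P,\xi)$ on $\mathcal E^\rho$ and on $\mathcal G^\rho$ (Theorem~\ref{T:Erho} and Proposition~\ref{P:Grho}(b)), such local sections would then exist at every point, forcing the bijection to be open, hence a homeomorphism---contradicting the second assertion. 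Choosing the example so that $\mathcal G^\rho$ is connected yields the stated failure at every $(N,\gamma)\in\mathcal G^\rho$.
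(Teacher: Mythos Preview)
Your reduction in the first paragraph is correct and essentially matches the paper's strategy: take $P=S$ so that $\Emb^\contact_{(S,\ker\rho)}(P,\xi)$ is just $\Diff(S,\ker\rho)$, and the question collapses to whether the orbit map $\Diff(S,\ker\rho)/\Diff(S,\rho)\to\mathcal O_\rho$ is a homeomorphism onto the orbit with its trace topology.

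The genuine gap is that you do not construct the example. You propose an irrational ellipsoid with Liouville slope and a small-divisors argument, but you explicitly flag the passage from the linear pathology (the non-closed range of the Reeb derivative) to the nonlinear statement (the orbit $\mathcal O_\rho$ not being locally closed) as ``the main obstacle,'' requiring KAM-type estimates that you do not provide. That step is the entire content of the proposition; without it you have a plan, not a proof. Your alternative suggestion via the Reeb period spectrum is closer to what actually works, but it too is left as a remark.

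The paper avoids this difficulty entirely with a much more elementary construction. It takes $S$ to be a prequantization circle bundle $p\colon S\to M$ over a compact integral symplectic manifold, with connection form $\tilde\alpha$, and sets $\alpha=e^{-p^*\lambda}\tilde\alpha$ for a carefully built function $\lambda$ supported on a shrinking sequence of disjoint balls. Hamiltonian diffeomorphisms $h_n$ of $M$ lift to strict contact diffeomorphisms $f_n$ of $(S,\tilde\alpha)$, and by arranging $h_n^*\lambda\to\lambda$ one gets $f_n^*\alpha\to\alpha$. The obstruction to rewriting $f_n^*\alpha$ as $g_n^*\alpha$ with $g_n\to\id$ is the Reeb period of $\alpha$ through a single fixed fibre $p^{-1}(x)$: over a ball where $\lambda\equiv 0$ the period is $2\pi$, but $f_n$ moves that fibre to a region where $\lambda$ is a positive constant, so the period of $f_n^*\alpha$ through $p^{-1}(x)$ is strictly less than $2\pi$. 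Since the Reeb period at a point is a $C^0$-continuous invariant of the contact form, no $g_n$ close to the identity can match $f_n^*\alpha$. No KAM, no small divisors, no global period-spectrum analysis---just the length of one closed Reeb orbit. This is Lemma~\ref{L:Salpha}, and the proof of Proposition~\ref{P:contactGrho} then follows your reduction verbatim.

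One minor point on your third paragraph: you invoke Proposition~\ref{P:Grho}(b) for local transitivity of $\Diff(P,\xi)$ on $\mathcal G^\rho$, but that proposition has as hypothesis precisely the existence of local sections of $q^\rho$, which is what you are showing fails. Your argument survives without it: a continuous local right inverse of the action at a single $(N,\gamma)$ already gives a continuous local section of $\mathcal E^\rho/\Diff(S,\rho)\to\mathcal G^\rho$ there, hence local openness there; if this held at every point the bijection would be open, contradicting the second assertion. The paper phrases this more tersely, simply observing that $q\colon\mathcal E\to\mathcal G$ has local smooth sections.
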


The following lemma will be crucial in the proof of Proposition~\ref{P:contactGrho}.

\begin{lemma}\label{L:Salpha}
There exists a compact contact manifold $(S,\alpha)$ and a sequence of diffeomorphisms $f_n\in\Diff(S)$ with the following properties:
\begin{enumerate}[(a)]
\item $f_n^*\alpha\to\alpha$ with respect to the $C^\infty$-topology.
\item There does not exist a sequence of diffeomorphisms $g_n\in\Diff(S)$ such that $g_n^*\alpha=f_n^*\alpha$ for all $n$ and $g_n\to\id_S$ with respect to the $C^0$-topology.
\end{enumerate}
\end{lemma}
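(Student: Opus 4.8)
\emph{Overview and reformulation.} The plan is to reformulate property (b) as a $C^0$-distance estimate, reduce to a question about conformal contactomorphisms via Gray stability, and then exhibit $(S,\alpha)$ with periodic Reeb dynamics together with a sequence $f_n$; proving the requisite $C^0$ lower bound is the crux. First the reformulation. If $g^*\alpha=f_n^*\alpha$ then $h:=g\circ f_n^{-1}$ satisfies $h^*\alpha=(f_n^{-1})^*f_n^*\alpha=\alpha$, so $h\in\Diff(S,\alpha)$; conversely every solution $g$ of $g^*\alpha=f_n^*\alpha$ has the form $g=h\circ f_n$ with $h\in\Diff(S,\alpha)$. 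Since the $C^0$-metric on $\Diff(S)$ is right invariant, $d_{C^0}(\id,h\circ f_n)=d_{C^0}(f_n^{-1},h)$. Hence the non-existence asserted in (b) is equivalent to $\liminf_n d_{C^0}\bigl(f_n^{-1},\Diff(S,\alpha)\bigr)>0$, and it suffices to find a compact contact manifold $(S,\alpha)$ and diffeomorphisms $f_n$ with $f_n^*\alpha\to\alpha$ in $C^\infty$ and $d_{C^0}(f_n^{-1},\Diff(S,\alpha))$ bounded away from $0$.

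\emph{Reduction to conformal contactomorphisms.} By the Gray--Moser stability argument, if $f_n^*\alpha\to\alpha$ in $C^\infty$ then there are diffeomorphisms $\psi_n\to\id$ in $C^\infty$ and functions $c_n\to1$ in $C^\infty$ with $(f_n\circ\psi_n)^*\alpha=c_n\alpha$. Since precomposing with $\psi_n$ changes $f_n^{-1}$, and hence its $C^0$-distance to $\Diff(S,\alpha)$, only by a $C^0$-null amount, we may assume the $f_n$ are conformal contactomorphisms of $\xi:=\ker\alpha$ with conformal factor $c_n\to1$; as inverses of conformal contactomorphisms are again of this type, it is equivalent to exhibit \emph{one} sequence of conformal contactomorphisms of $(S,\xi)$, with conformal factor tending to $1$, that stays a definite $C^0$-distance from $\Diff(S,\alpha)$. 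The conceptual point to exploit is that Gray stability corrects $f_n$ into the \emph{conformal} contact group up to a $C^0$-small error, while the residual conformal-to-strict discrepancy, though $C^\infty$-small, can be arranged to cost a uniformly $C^0$-large correction for a suitable $(S,\alpha)$.

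\emph{The example.} I would take $(S,\alpha)$ to be a prequantization (Boothby--Wang) circle bundle $\pi\colon S\to B$ over a closed symplectic manifold $(B,\omega)$, chosen with $H^1(B;\mathbb R)\ne0$ (for instance $B$ a surface of positive genus, or a torus), so that the Reeb flow of $\alpha$ is a free circle action and $\Diff(S,\alpha)$ consists of the connection-preserving lifts of symplectomorphisms of $B$ --- roughly a central extension of the group of Hamiltonian diffeomorphisms of $(B,\omega)$, a proper $C^0$-closed subgroup of $\mathrm{Symp}(B,\omega)$. The sequence $f_n$ I would build so that it almost commutes with the Reeb circle action and has conformal factor tending to $1$, while the diffeomorphism itself drifts: concretely, from the time-$t_n$ maps of the contact flow of a carefully chosen contact vector field (or a bounded product of such), using the periodicity of the Reeb flow to drive the multiplicative conformal-factor cocycle back toward $1$ along some $t_n\to\infty$ while the flow has moved a fixed distance; an equivalent route is to write such $f_n$ down directly in Boothby--Wang coordinates. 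The circle $S=S^1$ is excluded here, in agreement with Example~\ref{Ex:S1}, so necessarily $\dim S\ge3$, which is where the construction has the room it needs.

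\emph{The main obstacle: the $C^0$ lower bound.} The hard step is to show these $f_n$ stay a definite $C^0$-distance from $\Diff(S,\alpha)$. For this I would use that $\Diff(S,\alpha)$ is $C^0$-rigid inside $\Diff(S)$: strict contactomorphisms preserve both $\alpha$ and the Reeb vector field, and --- in the spirit of the Eliashberg--Gromov rigidity theorem, together with the fact that the Reeb flow here is a circle action --- a $C^0$-limit of strict contactomorphisms which happens to be a diffeomorphism is again strict, hence again a lift of a Hamiltonian diffeomorphism of $B$ with the correct flux. Assuming $d_{C^0}(f_n,\Diff(S,\alpha))\to0$ along a subsequence, one would extract $h_n\in\Diff(S,\alpha)$ with $d_{C^0}(f_n,h_n)\to0$, and combining this with $f_n^*\alpha\to\alpha$ force the $f_n$, up to $C^0$-small errors, to be uniformly approximated by lifts of Hamiltonian diffeomorphisms --- contradicting the feature built into the construction (a uniform lower bound on a flux-type defect, or on the displacement of a fixed compact subset, carried by the $f_n$). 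Making this rigidity-plus-contradiction argument precise, and doing the dynamical bookkeeping that returns the conformal factor near $1$ while keeping the $C^0$-defect bounded below, is where essentially all the work lies.
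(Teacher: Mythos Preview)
Your reformulation of (b) as a $C^0$-distance estimate and the reduction via Gray stability are correct and useful. Your choice of a Boothby--Wang prequantization bundle also matches the paper's setting. However, the route you propose from there --- a flux-type defect combined with Eliashberg--Gromov-style $C^0$-rigidity --- is genuinely different from the paper's argument, and the hard step you identify is in fact not carried out: you have not specified a concrete sequence $f_n$, nor a concrete invariant that obstructs $C^0$-approximation by strict contactomorphisms, and the interaction between ``conformal factor returns to $1$'' and ``flux defect stays bounded below'' is left entirely open. It is not clear that this strategy can be completed without substantial additional work, and the assumption $H^1(B;\mathbb R)\neq0$ is a symptom that you are reaching for a global obstruction where a local one suffices.

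The paper's proof is much more elementary and uses a different obstruction: \emph{Reeb periods at a single point}. Starting from the Boothby--Wang form $\tilde\alpha$ on $S\to M$, the paper does \emph{not} take $\alpha=\tilde\alpha$; instead it sets $\alpha=e^{-p^*\lambda}\tilde\alpha$ for a carefully built function $\lambda$ on $M$ supported in a sequence of small disjoint balls $U_n$ accumulating at one point, with $\lambda\equiv0$ on a fixed ball $U$. The $f_n$ are lifts of Hamiltonian diffeomorphisms $h_n$ of $M$ chosen so that $h_n$ moves a point $x\in U$ into $U_n$ but is the identity on all other $U_j$; hence $h_n^*\lambda\to\lambda$ in $C^\infty$, giving $f_n^*\alpha\to\alpha$. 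The obstruction for (b) is then purely dynamical and local: over $U$ the Reeb flow of $\alpha$ is the circle action with period $2\pi$, so for any $g$ that is $C^0$-close to $\id$ the Reeb orbit of $g^*\alpha$ through a fixed point of $p^{-1}(x)$ still has minimal period exactly $2\pi$; but $f_n^*\alpha$ equals $c_n\alpha$ near $p^{-1}(x)$ with $0<c_n<1$, so its Reeb orbit there has period $2\pi c_n\neq2\pi$. No rigidity theorem and no cohomological hypothesis on the base are needed.
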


\begin{proof}
Let $(M,\omega)$ be a connected compact symplectic manifold with integral symplectic form.
Choose a sequence of non-empty open subsets $U,U_1,U_2,U_3,\dots$ of $M$ such that their closures are mutually disjoint.
Choose points $x\in U$ and $x_n\in U_n$.
Assume that the sequences of closures $\bar U_n$ only accumulates at a single point.
Choose Hamiltonian diffeomorphisms $h_n\in\Ham(M,\omega)$ such that for each $n$ we have
\begin{enumerate}[(i)]
\item $h_n(y)=y$ for all $y\in\bigcup_{i\neq n}\bar U_i$, and
\item $h_n(x)=x_n$.
\end{enumerate}
Shrinking $U_n$, we may moreover assume 
\begin{enumerate}[(i)]
\setcounter{enumi}{2}
\item $h_n^{-1}(U_n)\subseteq U$.
\end{enumerate}
For each $n$ let $\lambda_n$ be a compactly supported smooth function on $U_n$ such that $\lambda_n$ is constant and strictly positive in a neighborhood of $x_n$.
Let $\lambda\colon M\to\mathbb R$ denote the function which coincides with $\lambda_n$ on $U_n$ and vanishes outside $\bigcup_nU_n$.
Multiplying $\lambda_n$ with a sufficiently fast decreasing sequence of constants, we may assume that the following hold true:
\begin{enumerate}[(i)]
\setcounter{enumi}{3}
\item $\lambda$ is smooth on $M$, and
\item $h_n^*\lambda\to\lambda$ with respect to the $C^\infty$ topology on $M$.
\end{enumerate}
By construction, we have:
\begin{enumerate}[(i)]
\setcounter{enumi}{5}
\item $\lambda$ is constant and strictly positive on a neighborhood of $x_n$, for each $n$, and
\item $\lambda$ vanishes on $U$.
\end{enumerate}

Let $p\colon S\to M$ be the circle bundle with Chern class $[\omega]$ and let $\tilde\alpha\in\Omega^1(S)$ be a principal connection $1$-form with curvature $\omega$.
Hence, $\tilde\alpha$ is a contact form on $S$.
It is well known that Hamiltonian diffeomorphisms on $M$ can be lifted to strict contact diffeomorphisms on $S$.
Hence, there exist diffeomorphisms $f_n\in\Diff(S)$ such that $f_n^*\tilde\alpha=\tilde\alpha$ and $p\circ f_n=h_n\circ p$.
We consider the contact form $\alpha:=e^{-p^*\lambda}\tilde\alpha$ on $S$.
From (v) we immediately obtain $f_n^*\alpha\to\alpha$, whence (a).

To see (b), let $E$ denote the Reeb vector field of $\alpha$.
From (vii) we see that $\alpha$ coincides with the principal connection $\tilde\alpha$ on $p^{-1}(U)$.
Over $p^{-1}(U)$, the Reeb vector field $E$ thus coincides with the fundamental vector field of the principal circle action.
For each $y\in p^{-1}(U)$ we thus have $\operatorname{Fl}_t^E(y)=y\Leftrightarrow t\in2\pi\mathbb Z$.
Hence, if $g\in\Diff(S)$ is sufficiently close to the identity with respect to the $C^0$-topology, then 
\begin{equation}\label{E:ee1}
\operatorname{Fl}_t^{g^*E}(x)=x\Leftrightarrow t\in2\pi\mathbb Z.
\end{equation}
Note that $g^*E$ is the Reeb vector field of $g^*\alpha$, and $f_n^*E$ is the Reeb vector field of $f_n^*\alpha$.
For each $n$ there exists a constant $0<c_n<1$ such that $f_n^*\alpha$ coincides with $c_n\alpha$ on a neighborhood of $p^{-1}(x)$, see (ii) and (vi).
Hence, $f_n^*E$ coincides with $c_n^{-1}E$ on a neighborhood of $p^{-1}(x)$.
In particular, 
\begin{equation}\label{E:ee2}
\operatorname{Fl}_t^{f_n^*E}(x)=x\Leftrightarrow t\in2\pi c_n\mathbb Z.
\end{equation}
Comparing \eqref{E:ee1} and \eqref{E:ee2} and using $c_n\neq1$, we conclude $g^*E\neq f_n^*E$ and thus $g^*\alpha\neq f_n^*\alpha$.
This shows (b).
\end{proof}

\begin{proof}[Proof of Proposition~\ref{P:contactGrho}]
We consider a closed manifold $S$ of dimension $2k+1$, a contact form $\alpha$ on $S$, and diffeomorphisms $f_n\in\Diff(S)$ as in Lemma~\ref{L:Salpha}.
Using Gray's stability result \cite[Theorem~2.2.2]{G08}, we may w.l.o.g.\ assume that each $f_n$ is a contact diffeomorphism.
Hence, there exist smooth functions $\lambda_n$ on $S$ such that $f_n^*\alpha=\lambda_n\alpha$.
Since $f_n^*\alpha\to\alpha$, we have $\lambda_n\to1$, as $n\to\infty$.
In particular, we may assume $\lambda_n>0$.

We let $\mu:=|\alpha\wedge(d\alpha)^k|$ denote the volume density associated with the volume form $\alpha\wedge(d\alpha)^k$.
Note that $f_n^*\mu=\lambda_n^{k+1}\mu$.
Moreover, we put $\rho:=\alpha\otimes\mu\in\Omega^1(S,|\Lambda|_S)$.
We consider the manifold $P:=S$ equipped with the contact structure $\xi:=\ker(\alpha)$.
Using the volume density $\mu$ on $S$ and the contact form $\alpha$ on $P$, we may identify $\mathcal E=\Emb(S,P)\times C^\infty(S,\mathbb R^\times)$, see Remark~\ref{R:mutriv}.
Using this identification we define a sequence $\Phi_n\in\mathcal E$ by $\Phi_n:=(\id_S,\lambda_n^{k+2})$.
Clearly, $\Phi_n$ converges to $\Phi:=(\id_S,1)\in\mathcal E$.
Using \eqref{E:JRCSL} we find $J_R^{\mathcal E}(\Phi)=\rho$ and 
\begin{equation}\label{E:aaa}
J_R^{\mathcal E}(\Phi_n)=\lambda_n^{k+2}\alpha\otimes\mu=f_n^*(\alpha\otimes\mu)=f_n^*\rho=\lambda_n^{k+2}\rho.
\end{equation}
In particular, we have $\Phi\in\mathcal E^\rho$, $\Phi_n\in(J_R^{\mathcal E})^{-1}(\rho\cdot\Diff(S))$ and $\Phi_n\to\Phi$, as $n\to\infty$.

We will now show that the corresponding sequence in $\mathcal E^\rho/\Diff(S,\rho)$ does not converge, cf.~\eqref{E:cb}.
Suppose, by contradiction, there exists a sequence of diffeomorphisms $g_n\in\Diff(S)$ such that $\psi_{g_n}^{\mathcal E}(\Phi_n)\in\mathcal E^\rho$ and $\psi^{\mathcal E}_{g_n}(\Phi_n)$ converges in $\mathcal E^\rho/\Diff(S,\rho)$.
W.l.o.g.\ we may moreover assume that $\psi^{\mathcal E}_{g_n}(\Phi_n)$ converges in $\mathcal E^\rho$.
In particular, $g_n$ converges to a diffeomorphism $g\in\Diff(S)$.
Using the $\Diff(S)$ equivariance of $J_R^{\mathcal E}$, the relation $\rho=J_R^{\mathcal E}(\psi^{\mathcal E}_{g_n}(\Phi_n))$ yields 
\begin{equation}\label{E:bbb}
g_n^*\rho=J_R^{\mathcal E}(\Phi_n).
\end{equation}
In particular, letting $n\to\infty$, we obtain $g\in\Diff(S,\rho)$.
Replacing $g_n$ with $g_n\circ g^{-1}$ we may w.l.o.g.\ assume that $g_n\to\id_S$.
Combining \eqref{E:aaa} and \eqref{E:bbb} we see that $g_n$ is a contact diffeomorphism.
Hence, there exist smooth functions $\tilde\lambda_n$ with $g_n^*\alpha=\tilde\lambda_n\alpha$.
We obtain $\lambda_n^{k+2}\rho=g_n^*\rho=\tilde\lambda_n^{k+2}\rho$ and thus $\tilde\lambda_n^{k+2}=\lambda_n^{k+2}$.
Since $g_n$ converges to the identity, we may assume $\tilde\lambda_n>0$.
Hence, $\tilde\lambda_n=\lambda_n$ and thus $g_n^*\alpha=f_n^*\alpha$.
This contradicts the choice of $f_n$, see Lemma~\ref{L:Salpha}(b).
Hence, the sequence in $\mathcal E^\rho/\Diff(S,\rho)$ corresponding to $\Phi_n$ does not converge.

This shows that the continuous bijection \eqref{E:cb} is not a homeomorphism.
The remaining statements follow immediately from the fact that the projection $q\colon\mathcal E\to\mathcal G$ admits local (smooth) sections.
\end{proof}

\section{Comparison with the EPDiff dual pair}\label{S:EPDiff}

A pair of moment maps has been introduced by D.~D.~Holm  and J.~E.~Marsden \cite{HM} in relation to the EPDiff equations describing geodesics on the group of all diffeomorphisms. 
The left moment map provides singular solutions of these equations, whereas the right moment map provides a constant of motion for the collective dynamics of these singular solutions. 
It has been shown in \cite{GBV12} that the pair of moment maps, when restricted to an appropriate open subset, do indeed form a symplectic dual pair.
In this section we relate the EPDiff dual pair of a manifold with the EPContact dual pair of its projectivized cotangent bundle.

\subsection{The dual pair for the EPDiff equation}

The (regular) cotangent bundle to the space of smooth maps from a closed manifold $S$ into a manifold $Q$ can be equipped with the canonical symplectic structure. 
Recall that the tangent space at $\eta\in C^\infty(S,Q)$ is $T_\eta C^\infty(S,Q)=\Gamma^\infty(\eta^*TQ)$.
Using the canonical pairing, we regard the space of 1-form densities along $\eta$,
\begin{equation}\label{E:Tsrho}
\Gamma^\infty(|\Lambda|_S\otimes\eta^*T^*Q)=T^*_\eta C^\infty(S,Q)_\reg,
\end{equation}
as the regular cotangent space at $\eta$.
In this way we identify the space of smooth fiberwise linear maps from $|\Lambda|_S^*$ to $T^*Q$ with the regular cotangent bundle:
\begin{equation}\label{identif}
C^\infty_\lin(|\Lambda|_S^*,T^*Q)=T^*C^\infty(S,Q)_\reg.
\end{equation}
Via this identification, the canonical $1$-form on $T^*C^\infty(S,Q)_\reg$ can be written in the form
\begin{equation}\label{E:thetaTQ}
\theta^{T^*C^\infty(S,Q)_\reg}(A)=\int_S\theta^{T^*Q}(A),
\end{equation}
where $A$ is a tangent vector at $\Phi\in T^*C^\infty(S,Q)_\reg$, i.e.,
$$
A\in T_\Phi C^\infty_\lin(|\Lambda|_S^*,T^*Q)
=\left\{A\in C^\infty(|\Lambda|_S^*,TT^*Q)\middle|\begin{array}{l}\pi^{TT^*Q}\circ A=\Phi\quad\textrm{and}\\\forall t\in\mathbb R:A\circ\delta^{|\Lambda|_S^*}_t=T\delta^{T^*Q}_t\circ A\end{array}\right\}
$$
and $\theta^{T^*Q}$ denotes the canonical $1$-form on $T^*Q$.
As before, the integrand $\theta^{T^*Q}(A)$ is a fiberwise linear function on the total space of $|\Lambda|^*_S$, which may be regarded as a section on $|\Lambda|_S$ and integrated over $S$.
The differential $d\theta^{T^*C^\infty(S,Q)_\reg}$ is the canonical (weakly non-degenerate) symplectic form on $T^*C^\infty(S,Q)_\reg$.

The cotangent lifted actions of the groups $\operatorname{Diff}(Q)$ and $\operatorname{Diff}(S)$ on the manifold $C^\infty(S,Q)$ preserve the canonical $1$-form $\theta^{T^*C^\infty(S,Q)_\reg}$.
In particular, these actions are Hamiltonian with equivariant moment maps $J_\Sing\colon T^*C^\infty(S,Q)_\reg\to\mathfrak X(Q)^*$,
\begin{equation}\label{E:JSing}
\langle J_\Sing(\Phi),Y\rangle=\theta^{T^*C^\infty(S,Q)_\reg}\left(\zeta_Y^{T^*C^\infty(S,Q)_\reg}(\Phi)\right),
\end{equation}
and $J_S\colon T^*C^\infty(S,Q)_\reg\to\mathfrak X(S)^*$,
\begin{equation}\label{E:JS}
\langle J_S(\Phi),Z\rangle=\theta^{T^*C^\infty(S,Q)_\reg}\left(\zeta_Z^{T^*C^\infty(S,Q)_\reg}(\Phi)\right),
\end{equation}
respectively, where $\Phi\in T^*C^\infty(S,Q)_\reg$.
Here $\zeta_Y^{T^*C^\infty(S,Q)_\reg}$ and $\zeta_Z^{T^*C^\infty(S,Q)_\reg}$ denote the fundamental vector fields on $T^*C^\infty(S,Q)_\reg$ corresponding to the (infinitesimal) action of $Y\in\mathfrak X(Q)$ and $Z\in\mathfrak X(S)$, respectively.
More explicitly, using the identification \eqref{E:Tsrho}, these cotangent moment maps are 
$$
\langle J_\Sing(\Phi),Y\rangle=\int_S \Phi(Y\circ\eta)
\qquad\text{and}\qquad
\langle J_S(\Phi),{Z}\rangle=\int_S\Phi(T \eta\circ Z),
$$ 
where $\eta\in C^\infty(S,Q)$ and $\Phi\in T^*_\eta C^\infty(S,Q)_\reg=\Gamma^\infty(|\Lambda|_S\otimes\eta^*T^*Q)$.
In particular, the second formula shows that $J_S$ takes values in $\Omega^1(S,|\Lambda|_S)\subseteq\mathfrak X(S)^*$.
More precisely, $J_S(\Phi)$ is the $1$-form density on $S$ corresponding to the $1$-homogeneous vertical $1$-form $\Phi^*\theta^{T^*Q}$ on the total space of $|\Lambda|_S^*$ where we regard $\Phi\colon|\Lambda|_S^*\to T^*Q$, cf.~\eqref{identif}.

We denote by $T^*C^\infty(S,Q)_\reg^\times$ the open subset of \eqref{identif} that corresponds to the space $C^\infty_\isom(|\Lambda|_S^*,T^*Q)$ of smooth maps that are linear and injective on fibers.
Restricting further the actions and moment maps to the open subset $T^*\Emb(S,Q)_\reg^\times$, we obtain the EPDiff symplectic dual pair \cite{GBV12}:
\begin{equation}\label{EPDiff_dualpair} 
\mathfrak X(Q)^*\xleftarrow{\quad J_\Sing\quad}T^*\Emb(S,Q)_\reg^\times\xrightarrow{\quad J_S\quad}\Omega^1(S,|\Lambda|_S)\subseteq\mathfrak{X}(S)^*.
\end{equation}
The left moment map $J_\Sing$ provides the formula for singular solutions of the EPDiff equations, whereas the right moment map $J_S$ provides a Noether conserved quantity for the (collective) Hamiltonian dynamics of these singular solutions in terms of the canonical variable $\Phi\in T^*\Emb(S,Q)_\reg^\times$, see \cite{HM}.

\begin{remark}
Fixing a volume density $\mu$ on $S$, we obtain identifications $T^*C^\infty(S,Q)_\reg\cong C^\infty(S,T^*Q)$ and $T^*C^\infty(S,Q)_\reg^\times\cong C^\infty(S,T^*Q\setminus Q)$, cf.~\eqref{identif}, as well as $\Omega^1(S,|\Lambda|_S)\cong\Omega^1(S)$. 
Using these identifications, the moment maps may be written in the form 
$$
\langle J_\Sing(\phi),Y\rangle=\int_S\phi(Y)\mu\qquad\text{and}\qquad J_S(\phi)=\phi^*\theta^{T^*Q},
$$ 
where $\phi\in C^\infty(S,T^*Q)$ and $Y\in\mathfrak X(Q)$, cf.\ \cite[Section~5]{GBV12}.
\end{remark}

\subsection{The projectivized cotangent bundle}

We will compare  the EPDiff dual pair described in the preceding paragraph with the EPContact dual pair associated with the projectivized cotangent bundle.
Recall that the projectivized cotangent bundle,
$$
P:=\mathbb P(T^*Q)=(T^*Q\setminus Q)/\mathbb R^\times\stackrel{p}{\longrightarrow}Q,
$$
admits a canonical contact structure \cite[Appendix~4]{A89} given by
\begin{equation}\label{purpos}
\xi_\ell=(T_\ell p)^{-1}(\ker\beta),
\end{equation}
where $\ell\in P$ and $\beta\in T^*Q$ is any non-zero element of $\ell$.
As the natural action of $\Diff(Q)$ on $P$ preserves the contact structure $\xi$, we obtain an injective group homomorphism 
$$
\Diff(Q)\to\Diff(P,\xi).
$$

The line bundle $L^*$, see Section~\ref{SS:P}, associated with the projectivized cotangent bundle is naturally isomorphic to the canonical line bundle  over $P$:
$$\gamma=\left\{(\ell,\beta)\middle|\ell\in P,\beta\in\ell\right\}.$$
Indeed, the vector bundle homomorphism $\chi\colon\gamma\to T^*P$ over the identity on $P$,
given by $\chi(\ell,\beta):=\beta\circ T_\ell p$, induces an isomorphism of line bundles, $\chi\colon\gamma\to L^*$.
Furthermore,
\begin{equation}\label{E:thetaTsQ}
\chi^*\theta^{L^*}=\pr_2^*\theta^{T^*Q},
\end{equation}
where $\pr_2\colon\gamma\to T^*Q$ denotes the canonical projection, i.e., the blow-up of the zero section in $T^*Q$.
We consider the map $\kappa\colon L^*\to T^*Q$, $\kappa:=\pr_2\circ\chi^{-1}$.
One readily checks:

\begin{lemma}\label{kap}
The map $\kappa$ is a vector bundle homomorphism over the bundle projection $p$,
$$
\xymatrix{
L^*\ar[rr]^-{\kappa}\ar[d]_{\pi^{L^*}}&&T^*Q\ar[d]^-{\pi^{T^*Q}}\\
P\ar[rr]^-{p}&& Q
}
$$
which has the following properties:
\begin{enumerate}[(a)]
\item $\kappa$ is equivariant over the homomorphism $\Diff(Q)\to\Diff(P,\xi)$.
\item $\kappa$ restricts to a diffeomorphism from $L^*\setminus P$ onto $T^*Q\setminus Q$.
\item $\kappa^*\theta^{T^*Q}=\theta^{L^*}$.
\end{enumerate}
\end{lemma}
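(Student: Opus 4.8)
The plan is to unwind the definitions and verify the three properties in order, using the identification $\chi\colon\gamma\to L^*$ together with the already-established relation \eqref{E:thetaTsQ}. First I would record what $\kappa$ actually does on points. By construction $\kappa=\pr_2\circ\chi^{-1}$, so for $\beta\in L^*_\ell$ we have $\chi^{-1}(\beta)=(\ell,b)\in\gamma$ for the unique $b\in\ell\subseteq T^*_{p(\ell)}Q$ with $b\circ T_\ell p=\beta$, and then $\kappa(\beta)=b\in T^*_{p(\ell)}Q$. This already exhibits $\kappa$ as a vector bundle homomorphism covering $p$: both $\chi$ and $\pr_2$ are fiberwise linear and cover $\mathrm{id}_P$ and $p$ respectively (the latter because $\pr_2$ sends $(\ell,b)\in\gamma$ to $b$, whose base point is $p(\ell)$). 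So the square in the statement commutes, $\pi^{T^*Q}\circ\kappa=p\circ\pi^{L^*}$, and $\kappa$ is linear on fibers.

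For (a), equivariance, I would trace through how $\Diff(Q)$ acts on each object. A diffeomorphism $g\in\Diff(Q)$ acts on $T^*Q$ by $(T g^{-1})^*$ in the usual cotangent-lift way; it acts on $P=\mathbb P(T^*Q)$ by the induced projectivized action; it acts on $\gamma$ by $(\ell,b)\mapsto(g\cdot\ell,(Tg^{-1})^*b)$, which is well defined since the cotangent lift is fiberwise linear hence descends to projective space; and it acts on $L^*$ via the contact lift $\Psi^{L^*}_g$ coming from the homomorphism $\Diff(Q)\to\Diff(P,\xi)$ of the preceding subsection. The point is that $\chi(\ell,b)=b\circ T_\ell p$ intertwines the $\gamma$-action with the $L^*$-action: this is a direct computation using the naturality $p\circ(\text{action on }P)=(\text{action on }Q)\circ p$, i.e.\ $g\circ p = p\circ \hat g$ where $\hat g$ denotes the induced map on $P$, so that differentiating gives $Tg\circ T p = T p\circ T\hat g$, and hence $\chi$ is equivariant; since $\pr_2$ is manifestly equivariant, so is $\kappa=\pr_2\circ\chi^{-1}$.

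Part (b) is essentially bookkeeping: $\chi$ is an isomorphism of line bundles over $P$, so it restricts to a diffeomorphism of the complements of the zero sections, $\gamma\setminus P\to L^*\setminus P$. On the other side, $\pr_2\colon\gamma\to T^*Q$ is the blow-up of the zero section, meaning precisely that it restricts to a diffeomorphism $\gamma\setminus P\to T^*Q\setminus Q$ (a nonzero covector $b\in T^*_qQ$ determines uniquely the line $\ell=\mathbb R b$ and the pair $(\ell,b)\in\gamma$). Composing, $\kappa=\pr_2\circ\chi^{-1}$ restricts to a diffeomorphism $L^*\setminus P\to T^*Q\setminus Q$. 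For (c), the identity $\kappa^*\theta^{T^*Q}=\theta^{L^*}$ is just a rewriting of \eqref{E:thetaTsQ}: from $\chi^*\theta^{L^*}=\pr_2^*\theta^{T^*Q}$ and $\kappa\circ\chi=\pr_2$ we get $\chi^*(\kappa^*\theta^{T^*Q})=\pr_2^*\theta^{T^*Q}=\chi^*\theta^{L^*}$, and since $\chi$ is a diffeomorphism of line bundles (hence $\chi^*$ is injective on forms) we may cancel it to conclude $\kappa^*\theta^{T^*Q}=\theta^{L^*}$; strictly, one should note that this equality holds on $\gamma\setminus P$ and extends to the zero section by continuity, or simply observe that $\chi$ is a global diffeomorphism so no extension argument is needed.

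I do not expect any genuine obstacle here — the lemma is a compatibility check. The only mildly delicate point is keeping the various $\Diff(Q)$-actions straight in part (a) and making sure the contact lift $\Psi^{L^*}_g$ on $L^*$ is the one induced through $\Diff(Q)\to\Diff(P,\xi)$ rather than some competing normalization; once the diagram $g\circ p=p\circ\hat g$ is differentiated, equivariance of $\chi$ (and hence of $\kappa$) is immediate.
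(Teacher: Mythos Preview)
Your proof is correct and complete. The paper does not actually give a proof of this lemma---it simply states ``One readily checks:'' before the statement---so your argument fills in exactly the routine verification the authors leave to the reader, unwinding the definition $\kappa=\pr_2\circ\chi^{-1}$ and using \eqref{E:thetaTsQ} for part~(c).
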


Composition with $\kappa$ provides a map, cf.~\eqref{identif}, 
$$
\mathcal L=C^\infty_\lin(|\Lambda|_S^*,L^*)\xrightarrow{\quad\kappa_*\quad}C^\infty_\lin(|\Lambda|_S^*,T^*Q)=T^*C^\infty(S,Q)_\reg
$$
which fits into the following diagram:
\begin{equation}\label{D:PQ}
\vcenter{
\xymatrix{
\mathfrak X(P,\xi)^*\ar[d]^-{i^*}
&&{\mathcal L}\ar[ll]_{J_L^{\mathcal L}}\ar[d]^-{\kappa_*}\ar[rr]^{J_R^{\mathcal L}}
&&\mathfrak X(S)^*\ar@{=}[d]
\\
\mathfrak X(Q)^*
&&{T^*C^\infty(S,Q)_\reg}\ar[ll]_{J_\Sing}\ar[rr]^{J_S}
&&\mathfrak X(S)^*
}}
\end{equation}
Here $i^*$ denotes the dual of the Lie algebra homomorphism $i\colon\mathfrak X(Q)\to\mathfrak X(P,\xi)$ corresponding to the homomorphism of groups $\Diff(Q)\to\Diff(P,\xi)$.  
Clearly, $i^*$ is equivariant over the homomorphism $\Diff(Q)\to\Diff(P,\xi)$.
Note that via \eqref{E:XPxi} and $\kappa$, the Lie algebra $\mathfrak X(P,\xi)=C^\infty_{\lin}(L^*)$ may be regarded as the space of homogeneous functions on $T^*Q\setminus Q$, while the image of $i$ consists of those which extend to fiberwise linear functions on $T^*Q$.

\begin{proposition}\label{T:comp}
The diagram \eqref{D:PQ} commutes.
The map $\kappa_*$ is equivariant over the homomorphism $\Diff(Q)\to\Diff(P,\xi)$ and also $\Diff(S)$-equivariant.
It restricts to a symplectic diffeomorphism from $\mathcal M\subseteq\mathcal L$ onto $T^*C^\infty(S,Q)^\times_\reg$.
\end{proposition}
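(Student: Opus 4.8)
The plan is to deduce Proposition~\ref{T:comp} from Lemma~\ref{kap} by a transport-of-structure argument; no new geometric input is needed. Recall that $\kappa_*$ is post-composition with the fiberwise-linear vector bundle homomorphism $\kappa\colon L^*\to T^*Q$, so it is defined and smooth on all of $\mathcal L$ as already noted before~\eqref{D:PQ}. The $\Diff(S)$-equivariance of $\kappa_*$ is immediate, since the reparametrization action is pre-composition with $\psi^{|\Lambda|_S^*}_f$ and commutes with post-composition by $\kappa$. Equivariance over $\Diff(Q)\to\Diff(P,\xi)$ follows by post-composing the identity $\kappa\circ\Psi^{L^*}_{\bar g}=\Psi^{T^*Q}_g\circ\kappa$ of Lemma~\ref{kap}(a), where $\bar g\in\Diff(P,\xi)$ is the image of $g\in\Diff(Q)$ and $\Psi^{T^*Q}$ is the cotangent lift. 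Differentiating this identity also gives the infinitesimal version $T\kappa\circ\zeta^{L^*}_{i(Y)}=\zeta^{T^*Q}_Y\circ\kappa$ for $Y\in\mathfrak X(Q)$, which I will use below ($\zeta^{T^*Q}_Y$ being the cotangent lift of $Y$).

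Next I would check that~\eqref{D:PQ} commutes. The right-hand square: by~\eqref{E:JRpwv}, $J_R^{\mathcal L}(\Phi)=\Phi^*\theta^{L^*}$, while $J_S(\kappa\circ\Phi)=(\kappa\circ\Phi)^*\theta^{T^*Q}=\Phi^*(\kappa^*\theta^{T^*Q})=\Phi^*\theta^{L^*}$ by Lemma~\ref{kap}(c); hence $J_S\circ\kappa_*=J_R^{\mathcal L}$. The left-hand square: fix $\Phi\in\mathcal L$ and $Y\in\mathfrak X(Q)$. The fundamental vector field of the cotangent-lifted $\Diff(Q)$-action on $T^*C^\infty(S,Q)_\reg$ at $\kappa\circ\Phi$ is $\zeta^{T^*Q}_Y\circ\kappa\circ\Phi$, so using~\eqref{E:JSing} and~\eqref{E:thetaTQ}, then the infinitesimal equivariance of $\kappa$, then Lemma~\ref{kap}(c), and finally~\eqref{E:thetaL}, \eqref{E:zetaXM}, \eqref{E:JLdef},
\[
\langle J_\Sing(\kappa\circ\Phi),Y\rangle=\int_S\theta^{T^*Q}\bigl(\zeta^{T^*Q}_Y\circ\kappa\circ\Phi\bigr)=\int_S\theta^{L^*}\bigl(\zeta^{L^*}_{i(Y)}\circ\Phi\bigr)=\langle J_L^{\mathcal L}(\Phi),i(Y)\rangle=\langle i^*J_L^{\mathcal L}(\Phi),Y\rangle,
\]
so $J_\Sing\circ\kappa_*=i^*\circ J_L^{\mathcal L}$.

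For the final assertion I would fix a volume density $\mu$ on $S$ and use it to identify $\mathcal L\cong C^\infty(S,L^*)$ and $T^*C^\infty(S,Q)_\reg\cong C^\infty(S,T^*Q)$, as in Remark~\ref{R:mutriv} and~\eqref{identif}; under these identifications $\kappa_*$ becomes post-composition with $\kappa$. The open subsets $\mathcal M$ and $T^*C^\infty(S,Q)_\reg^\times$ then correspond to $C^\infty(S,L^*\setminus P)$ and $C^\infty(S,T^*Q\setminus Q)$, because a fiberwise-linear homomorphism of line bundles is a fiberwise isomorphism exactly when it carries the complement of the zero section into the complement of the zero section. Since $\kappa$ restricts to a diffeomorphism $L^*\setminus P\to T^*Q\setminus Q$ by Lemma~\ref{kap}(b), post-composition with it is a diffeomorphism $C^\infty(S,L^*\setminus P)\to C^\infty(S,T^*Q\setminus Q)$; as the smooth structures here do not depend on $\mu$, this shows that $\kappa_*$ restricts to a diffeomorphism $\mathcal M\to T^*C^\infty(S,Q)_\reg^\times$. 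Finally, repeating the computation above with an arbitrary tangent vector $\eta\in T_\Phi\mathcal L$ in place of a fundamental vector field, using $T\kappa_*(\eta)=T\kappa\circ\eta$ together with~\eqref{E:thetaL}, \eqref{E:thetaTQ}, and Lemma~\ref{kap}(c), yields $\kappa_*^*\theta^{T^*C^\infty(S,Q)_\reg}=\theta^{\mathcal L}$, hence $\kappa_*^*(d\theta^{T^*C^\infty(S,Q)_\reg})=\omega^{\mathcal L}$, which restricts to $\omega^{\mathcal M}$; so $\kappa_*|_{\mathcal M}$ is a symplectomorphism onto $T^*C^\infty(S,Q)_\reg^\times$. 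I do not expect a genuine obstacle here: given Lemma~\ref{kap} the proposition is essentially bookkeeping, the only slightly delicate points being the set-theoretic description of $\mathcal M$ and $T^*C^\infty(S,Q)_\reg^\times$ in terms of maps avoiding the zero sections and the matching of the two smooth structures on the mapping spaces, both of which the volume-density trivialization takes care of.
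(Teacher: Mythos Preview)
Your proof is correct and follows essentially the same approach as the paper's: both deduce everything from Lemma~\ref{kap}, establishing the pullback identity $\kappa_*^*\theta^{T^*C^\infty(S,Q)_\reg}=\theta^{\mathcal L}$ from Lemma~\ref{kap}(c) and combining it with the $\kappa_*$-relatedness of the fundamental vector fields to make the diagram commute, then invoking Lemma~\ref{kap}(b) for the diffeomorphism claim. The only cosmetic differences are that the paper proves the $1$-form pullback identity first and uses it uniformly for both squares (via~\eqref{E:JLdef} and~\eqref{E:JRdef}), whereas you handle the right square via~\eqref{E:JRpwv} and spell out the diffeomorphism step more explicitly through the volume-density trivialization.
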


\begin{proof}
The map $\kappa_*$ is equivariant over the homomorphism $\Diff(Q)\to\Diff(P,\xi)$ since $\kappa$ has the same property, see Lemma~\ref{kap}(a).
Clearly, $\kappa_*$ is $\Diff(S)$-equivariant too.
Hence, the fundamental vector fields are $\kappa_*$-related, that is,
\begin{equation}\label{E:zetaYZ}
T\kappa_*\circ\zeta^{\mathcal L}_{i(Y)}=\zeta_Y^{T^*C^\infty(S,Q)_\reg}\circ\kappa_*
\qquad\text{and}\qquad
T\kappa_*\circ\zeta^{\mathcal L}_Z=\zeta_Z^{T^*C^\infty(S,Q)_\reg}\circ\kappa_*
\end{equation}
for $Y\in\mathfrak X(Q)$ and $Z\in\mathfrak X(S)$.
Using Lemma~\ref{kap}(c), \eqref{E:thetaL}, and \eqref{E:thetaTQ}, we obtain
\begin{equation}\label{E:kappatheta}
(\kappa_*)^*\theta^{T^*C^\infty(S,Q)_\reg}=\theta^{\mathcal L}.
\end{equation}
Combining the latter with the first equation in \eqref{E:zetaYZ}, we see that the square on the left hand side in \eqref{D:PQ} commutes, cf.\ \eqref{E:JSing} and \eqref{E:JLdef}.
Combining \eqref{E:kappatheta} with the second equation in \eqref{E:zetaYZ}, we see that the square on the right hand side in \eqref{D:PQ} commutes, cf.\ \eqref{E:JS} and \eqref{E:JRdef}.
As $\kappa$ restricts to a diffeomorphism from $L^*\setminus P$ onto $T^*Q\setminus Q$, the map $\kappa_*$ restricts to a diffeomorphism from $\mathcal M$ onto $T^*C^\infty(S,Q)^\times_\reg$ which is symplectic in view of \eqref{E:kappatheta}.
\end{proof}

\subsection{Coadjoint orbits of the diffeomorphism group}

The first line in \eqref{D:PQ} becomes a dual pair when restricted to $\mathcal E=\Emb_\lin(|\Lambda|_S^*,L^*)$.
The second line has to be restricted to $T^*\Emb(S,Q)^\times_\reg$ to become a dual pair. 
The latter is a proper open subset of the image $\kappa_*(\mathcal E)$.
To make this more precise, note that 
$$
\mathcal E_Q:=\left\{\Phi\in\mathcal E:p\circ\pi^{\mathcal E}(\Phi)\in\Emb(S,Q)\right\}
$$
is a $\Diff(S)$ invariant open subset of $\mathcal E$.
Since $p\colon P\to Q$ is $\Diff(Q)$ equivariant, $\mathcal E_Q$ is invariant under $\Diff(Q)$ too.
According to Proposition~\ref{T:comp}, the map $\kappa_*$ restricts to a $\Diff(Q)$ and $\Diff(S)$ equivariant symplectomorphism which makes the following diagram commute:
\begin{equation}\label{D:EQ}
\vcenter{
\xymatrix{
&&\mathcal E_Q\ar@/_2ex/[lld]_{J_L^{\mathcal E_Q}}\ar[d]_-\cong^-{\kappa_*}\ar@/^2ex/[rrd]^{J_R^{\mathcal E_Q}}
\\
\mathfrak X(Q)^*
&&T^*\Emb(S,Q)_\reg^\times\ar[ll]_{J_\Sing}\ar[rr]^{J_S}
&&\mathfrak X(S)^*
}}
\end{equation}
Here $J_L^{\mathcal E_Q}$ and $J_R^{\mathcal E_Q}$ denote the restrictions of $i^*\circ J_L^{\mathcal L}$ and $J_R^{\mathcal L}$, respectively, cf.~\eqref{D:PQ}.

This can be used to obtain a geometric interpretation of some coadjoint orbits of $\Diff(Q)$.
To this end consider the open subset
$$
\mathcal G_Q:=\left\{(N,\gamma)\in\mathcal G:p|_N\in\Emb(N,Q)\right\}
$$
of $\mathcal G$.
Since $\mathcal E_Q=q^{-1}(\mathcal G_Q)$, the principal $\Diff(S)$-bundle $q\colon\mathcal E\to\mathcal G$ restricts to a principal $\Diff(S)$ bundle $q_Q\colon\mathcal E_Q\to\mathcal G_Q$.
Restricting $i^*\circ J_L^{\mathcal G}\colon\mathcal G\to\mathfrak X(Q)^*$, see \eqref{E:JLG}, we obtain a smooth map
\begin{equation}\label{E:JLGQ}
J_L^{\mathcal G_Q}\colon\mathcal G_Q\to\mathfrak X(Q)^*,\qquad J_L^{\mathcal G_Q}\circ q_Q=J_L^{\mathcal E_Q}.
\end{equation}
In view of \eqref{jlg} we have the explicit formula
\begin{equation}\label{E:jlgQ}
\langle J_L^{\mathcal G_Q}(N,\gamma),Y\rangle=\int_N\gamma(i(Y)|_N),
\end{equation}
where $(N,\gamma)\in\mathcal G_Q$ and $Y\in\mathfrak X(Q)$.
On the right hand side $i(Y)$ is regarded as a section of $L$, see \eqref{E:XPxi}, restricted to $N$ and contracted with $\gamma\in\Gamma^\infty(|\Lambda|_N\otimes L|_N^*)$ to produce a density on $N$.

\begin{proposition}
The map $J_L^{\mathcal G_Q}$ in \eqref{E:JLGQ} is a $\Diff(Q)$-equivariant injective immersion.
\end{proposition}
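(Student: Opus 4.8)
The plan is to deduce all three properties from results already at our disposal, exploiting the factorisation $J_L^{\mathcal G_Q}=i^*\circ(J_L^{\mathcal G}|_{\mathcal G_Q})$ together with the comparison diagrams \eqref{D:PQ} and \eqref{D:EQ}. Equivariance will be immediate: $J_L^{\mathcal G}$ is $\Diff(P,\xi)$-equivariant by Proposition~\ref{jleg}(a), the map $i^*$ is equivariant over the homomorphism $\Diff(Q)\to\Diff(P,\xi)$, and $\mathcal G_Q$ is a $\Diff(Q)$-invariant subset of $\mathcal G$; composing yields $\Diff(Q)$-equivariance of $J_L^{\mathcal G_Q}$. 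As a first step I would record, using Lemma~\ref{kap} and \eqref{E:jlgQ}, the following explicit description: for $(N,\gamma)\in\mathcal G_Q$ the set $\bar N:=p(N)$ is an embedded submanifold of $Q$ diffeomorphic to $S$, and $\bar\gamma:=\kappa\circ\gamma$, transported along the diffeomorphism $p|_N\colon N\to\bar N$, is a nowhere vanishing section of $|\Lambda|_{\bar N}\otimes T^*Q|_{\bar N}$ with $\langle J_L^{\mathcal G_Q}(N,\gamma),Y\rangle=\int_{\bar N}\bar\gamma(Y|_{\bar N})$ for $Y\in\mathfrak X(Q)$. By \eqref{D:EQ} this is precisely the map obtained from the left leg $J_\Sing$ of the EPDiff dual pair by transporting along the $\Diff(Q)$- and $\Diff(S)$-equivariant symplectomorphism $\kappa_*\colon\mathcal E_Q\xrightarrow{\ \cong\ }T^*\Emb(S,Q)_\reg^\times$ and then descending along the principal $\Diff(S)$-bundle $q_Q\colon\mathcal E_Q\to\mathcal G_Q$, i.e.\ $J_L^{\mathcal G_Q}\circ q_Q=J_L^{\mathcal E_Q}=J_\Sing\circ\kappa_*$.

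For the immersion property I would invoke the dual pair \eqref{D:EQ}. The EPDiff moment maps $J_\Sing,J_S$ form a symplectic dual pair with mutually completely orthogonal actions \cite{GBV12} (the analogue of Theorem~\ref{T:dp}), so $\ker TJ_\Sing$ consists precisely of the tangent spaces to the $\Diff(S)$-orbits. Transporting along the equivariant symplectomorphism $\kappa_*$ shows that $\ker TJ_L^{\mathcal E_Q}$ consists of the $q_Q$-vertical tangent vectors. Hence, if $A\in T_{(N,\gamma)}\mathcal G_Q$ satisfies $TJ_L^{\mathcal G_Q}(A)=0$, then any $q_Q$-lift $\tilde A$ of $A$ lies in $\ker TJ_L^{\mathcal E_Q}$, so $\tilde A$ is $q_Q$-vertical and $A=0$. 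Thus $TJ_L^{\mathcal G_Q}$ is injective, i.e.\ $J_L^{\mathcal G_Q}$ is an immersion.

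For injectivity I would mimic the argument in the proof of Proposition~\ref{jleg}, inserting the passage $N\leftrightarrow\bar N$ furnished by $\kappa$. Assuming $J_L^{\mathcal G_Q}(N_1,\gamma_1)=J_L^{\mathcal G_Q}(N_2,\gamma_2)=:m$, the fact that each $\bar\gamma_i$ is nowhere vanishing gives $\supp(m)=\bar N_1=\bar N_2=:\bar N$. Next I would unwind the identification $\chi\colon\gamma\to L^*$ and the definition $\kappa=\pr_2\circ\chi^{-1}$ to see that $\kappa$ carries each fibre $L^*_n$ isomorphically onto the tautological line $n\subseteq T^*_{p(n)}Q$; consequently, for $x\in\bar N$, the value $\bar\gamma_i(x)$ spans exactly that line in $T^*_xQ$ which, viewed as a point of $P=\mathbb P(T^*Q)$, lies on $N_i$ over $x$. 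Therefore $N_i=\{[\bar\gamma_i(x)]\mid x\in\bar N\}$, and $\gamma_i$ is recovered from $\bar\gamma_i$ via $\kappa^{-1}$ and $p|_{N_i}$, so it suffices to prove $\bar\gamma_1=\bar\gamma_2$. If $\delta:=\bar\gamma_1-\bar\gamma_2$ were nonzero at some $x_0\in\bar N$, I would choose $\bar Y_0\in T_{x_0}Q$ with $\delta(x_0)(\bar Y_0)\neq0$, extend it to a vector field on $Q$, and multiply by a non-negative bump function supported in a small connected chart neighbourhood $V$ of $x_0$ in $\bar N$ on which $\delta(\bar Y_0|_{\bar N})$ is nowhere vanishing; the resulting $Y\in\mathfrak X(Q)$ satisfies $\int_{\bar N}\delta(Y|_{\bar N})\neq0$, contradicting $\langle m,Y\rangle-\langle m,Y\rangle=0$. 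Hence $\bar\gamma_1=\bar\gamma_2$, whence $(N_1,\gamma_1)=(N_2,\gamma_2)$.

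I expect the main obstacle to be the middle step of the injectivity argument: verifying carefully that the point of $P$ above $x\in\bar N$ is recovered as the projective line spanned by $\bar\gamma_i(x)$, which amounts to chasing the identifications $L^*\cong\gamma$ and $\kappa=\pr_2\circ\chi^{-1}$ to conclude that $\kappa$ sends each fibre $L^*_n$ onto the tautological line $n$. Everything else---the explicit formula for $J_L^{\mathcal G_Q}$, the descent argument, and the bump-function density argument---should be routine once the comparison diagram \eqref{D:EQ} and the EPDiff dual pair of \cite{GBV12} are in hand.
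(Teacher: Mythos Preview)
Your proof is correct. For equivariance and the immersion property you proceed essentially as the paper does: both arguments rest on Proposition~\ref{jleg} and on the mutually completely orthogonal structure of the EPDiff dual pair from \cite{GBV12}, transported through the diagram \eqref{D:EQ}.

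For injectivity, however, you take a genuinely different route. The paper simply invokes \cite[Proposition~5.2]{GBV12}, which asserts that $\Diff(S)$ acts transitively on each level set of $J_\Sing$; since $q_Q\colon\mathcal E_Q\to\mathcal G_Q$ is a $\Diff(S)$-principal bundle and $J_L^{\mathcal G_Q}\circ q_Q=J_\Sing\circ\kappa_*$, injectivity follows in one line. You instead rederive this consequence by hand: you use the support to recover $\bar N$, a bump-function argument to recover $\bar\gamma$, and then the key observation that $\kappa$ identifies each fibre $L^*_n$ with the tautological line $n\subseteq T^*_{p(n)}Q$, so that $N$ is reconstructed from $\bar\gamma$ as $\{[\bar\gamma(x)]:x\in\bar N\}$. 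This last step is the one you rightly flag as requiring care, and it does work: once a nonzero element of $|\Lambda|_{\bar N,x}$ is fixed, $\bar\gamma(x)$ yields a nonzero covector whose projective class is independent of that choice and equals the unique point of $N$ over $x$. Your argument is more self-contained and makes the geometry explicit, at the cost of being longer; the paper's argument is shorter but imports the transitivity statement wholesale from \cite{GBV12}.
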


\begin{proof}
By Proposition~\eqref{jleg}, the map $J_L^{\mathcal G_Q}$ is smooth and $\Diff(Q)$-equivariant.
It is immersive because the moment maps at the bottom line of \eqref{D:EQ} are mutually completely orthogonal, see \cite[Theorem~5.6]{GBV12}.
The injectivity follows from the transitivity of the $\Diff(S)$-action on level sets of the left moment map $J_\Sing$, which is the content of \cite[Proposition~5.2]{GBV12}.
\end{proof}

Recall from Theorem~\ref{T:Giso} that $\mathcal G^\iso$ is a closed submanifold of $\mathcal G$.
Hence, $\mathcal G^\iso_Q:=\mathcal G_Q\cap\mathcal G^\iso$ is a closed splitting submanifold of $\mathcal G_Q$.
Consequently, $\mathcal E^\iso_Q:=\mathcal E_Q\cap\mathcal E^\iso=q^{-1}(\mathcal G^\iso_Q)$ is a closed splitting submanifold of $\mathcal E_Q$.
The projection $q_Q\colon\mathcal E_Q\to\mathcal G_Q$ restricts to a smooth principal bundle $q^\iso_Q\colon\mathcal E^\iso_Q\to\mathcal G^\iso_Q$ with structure group $\Diff(S)$.
Via $\kappa_*$ the manifold 
$$
\mathcal G_Q^\iso=\mathcal E_Q^\iso/\Diff(S)=\{(N,\gamma)\in\mathcal G:N\in\Gr_S^\iso(P),p|_N\in\Emb(N,Q)\}
$$
identifies with $J_S^{-1}(0)/\Diff(S)$, the reduced space at zero of the right $\Diff(S)$-action on $\Emb(S,Q)^\times_\reg$.

According to \cite[Proposition~5.5]{GBV12}, the group $\Diff_c(Q)$ acts locally transitive on $\mathcal E_Q^\iso$ and, thus, on $\mathcal G_Q^\iso$ too.
In particular, the $\Diff_c(Q)$-orbit $(\mathcal G^\iso_{Q})_{(N,\gamma)}$ through $(N,\gamma)\in\mathcal G^\iso_Q$ is open and closed in $\mathcal G^\iso_Q$.
From Theorem~\ref{T:Giso} we thus obtain:

\begin{corollary}\label{C:GisoQ}
The projection $q$ restricts to a smooth principal bundle $q^\iso_Q\colon\mathcal E^\iso_Q\to\mathcal G^\iso_Q$ with structure group $\Diff(S)$.
The restriction of the symplectic form $\omega^{\mathcal E}$ to $\mathcal E^\iso_Q$ descends to a (reduced) symplectic form $\omega^{\mathcal G^\iso_Q}$ on $\mathcal G^\iso_Q$.
The $\Diff(Q)$-equivariant injective immersion 
$$
J_L^{\mathcal G^\iso_Q}\colon\mathcal G^\iso_Q\to\mathfrak X(Q)^*,\qquad
\langle J_L^{\mathcal G^\iso_Q}(N,\gamma),Y\rangle=\int_N\gamma(i(Y)|_N),
$$ 
provided by restriction of $J_L^{\mathcal G_Q}$ from \eqref{E:JLGQ}, identifies $(\mathcal G^\iso_{Q})_{(N,\gamma)}$ with the coadjoint orbit of $\Diff_c(Q)$ through $J_L^{\mathcal G_Q^\iso}(N,\gamma)$ in such a way that  
\begin{equation}\label{E:KKSisoQ}
\bigl(J_L^{\mathcal G^\iso_{Q}}\bigr)^*\omega^\KKS=\omega^{\mathcal G^\iso_Q}.
\end{equation}
Here $(N,\gamma)\in\mathcal G^\iso_Q$ and $\omega^\KKS$ denotes the Kostant--Kirillov--Souriau symplectic form on the coadjoint orbit through $J_L^{\mathcal G_Q^\iso}(N,\gamma)\in\mathfrak X(Q)^*$.
\end{corollary}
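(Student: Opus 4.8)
The plan is to obtain Corollary~\ref{C:GisoQ} by restricting Theorem~\ref{T:Giso}, applied to the contact manifold $(P,\xi)=(\mathbb P(T^*Q),\xi)$, to the $\Diff(S)$- and $\Diff(Q)$-invariant open subset $\mathcal E_Q\subseteq\mathcal E$, and then feeding in the transitivity of the (in general proper) subgroup $\Diff_c(Q)\subseteq\Diff_c(P,\xi)$ on the isotropic part. First I would observe that since $\mathcal E_Q$ is open in $\mathcal E$ and $\mathcal G_Q$ is open in $\mathcal G$, the sets $\mathcal E^\iso_Q=\mathcal E_Q\cap\mathcal E^\iso$ and $\mathcal G^\iso_Q=\mathcal G_Q\cap\mathcal G^\iso$ are open in the smooth submanifolds $\mathcal E^\iso$, $\mathcal G^\iso$ of Theorem~\ref{T:Giso}(a), hence are themselves smooth (splitting) submanifolds, and $\mathcal E^\iso_Q=q^{-1}(\mathcal G^\iso_Q)$ is $\Diff(S)$-invariant. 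Restricting the principal $\Diff(S)$-bundle $q^\iso\colon\mathcal E^\iso\to\mathcal G^\iso$ of Theorem~\ref{T:Giso}(b) over the open base $\mathcal G^\iso_Q$ then yields the principal $\Diff(S)$-bundle $q^\iso_Q\colon\mathcal E^\iso_Q\to\mathcal G^\iso_Q$, which is the first assertion.

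For the reduced symplectic form I would simply restrict. By Theorem~\ref{T:Giso}(b) the pull back of $\omega^{\mathcal E}$ to $\mathcal E^\iso$ is $q^\iso$-basic, $(q^\iso)^*\omega^{\mathcal G^\iso}=\omega^{\mathcal E}|_{\mathcal E^\iso}$, and $(J_L^{\mathcal G^\iso})^*\omega^\KKS=\omega^{\mathcal G^\iso}$. Pulling these identities back along the open inclusion $\mathcal E^\iso_Q\hookrightarrow\mathcal E^\iso$ shows that $\omega^{\mathcal E}|_{\mathcal E^\iso_Q}$ descends to the (weakly) symplectic form $\omega^{\mathcal G^\iso_Q}:=\omega^{\mathcal G^\iso}|_{\mathcal G^\iso_Q}$ along $q^\iso_Q$.

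The heart of the matter is the coadjoint-orbit identification, and this is the one place where an input beyond Theorem~\ref{T:Giso} is genuinely required: because $\Diff_c(Q)$ is in general a proper subgroup of $\Diff_c(P,\xi)$, the local transitivity of $\Diff_c(P,\xi)$ on $\mathcal E^\iso$ does not by itself give local transitivity of $\Diff_c(Q)$ on $\mathcal E^\iso_Q$. Here I would invoke \cite[Proposition~5.5]{GBV12}, transported to the present setting via the $\Diff(Q)$- and $\Diff(S)$-equivariant symplectomorphism $\kappa_*$ of Proposition~\ref{T:comp} which identifies $\mathcal E^\iso_Q$ with the zero level $J_S^{-1}(0)$ in $T^*\Emb(S,Q)^\times_\reg$, to conclude that $\Diff_c(Q)$ acts locally and infinitesimally transitively on $\mathcal E^\iso_Q$, hence on $\mathcal G^\iso_Q$; in particular each orbit $(\mathcal G^\iso_Q)_{(N,\gamma)}$ is open and closed in $\mathcal G^\iso_Q$. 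The map $J_L^{\mathcal G^\iso_Q}$, being the restriction to $\mathcal G^\iso_Q$ of the $\Diff(Q)$-equivariant injective immersion $J_L^{\mathcal G_Q}=i^*\circ J_L^{\mathcal G}|_{\mathcal G_Q}$ of the preceding proposition, is itself a $\Diff(Q)$-equivariant injective immersion, and by equivariance it carries $(\mathcal G^\iso_Q)_{(N,\gamma)}$ bijectively onto the coadjoint orbit of $\Diff_c(Q)$ through $J_L^{\mathcal G_Q}(N,\gamma)$, which is the asserted identification; the explicit formula for $J_L^{\mathcal G^\iso_Q}$ is read off from \eqref{E:jlgQ}.

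It remains to check \eqref{E:KKSisoQ}. Writing $\zeta^{\mathcal G^\iso_Q}_Y=\zeta^{\mathcal G^\iso}_{i(Y)}|_{\mathcal G^\iso_Q}$ for $Y\in\mathfrak X(Q)$, using that $i\colon\mathfrak X(Q)\to\mathfrak X(P,\xi)$ is a Lie algebra homomorphism and $J_L^{\mathcal G^\iso_Q}=i^*\circ J_L^{\mathcal G^\iso}|_{\mathcal G^\iso_Q}$, and combining with \eqref{E:KKSiso} and Remark~\ref{R:KKS}, I expect
\begin{multline*}
\bigl((J_L^{\mathcal G^\iso_Q})^*\omega^\KKS\bigr)\bigl(\zeta^{\mathcal G^\iso_Q}_Y,\zeta^{\mathcal G^\iso_Q}_{Y'}\bigr)
=\langle J_L^{\mathcal G^\iso},[i(Y),i(Y')]\rangle
\\=\omega^{\mathcal G^\iso}\bigl(\zeta^{\mathcal G^\iso}_{i(Y)},\zeta^{\mathcal G^\iso}_{i(Y')}\bigr)
=\omega^{\mathcal G^\iso_Q}\bigl(\zeta^{\mathcal G^\iso_Q}_Y,\zeta^{\mathcal G^\iso_Q}_{Y'}\bigr)
\end{multline*}
for all $Y,Y'\in\mathfrak X(Q)$ (all vectors evaluated at a fixed $(N,\gamma)\in\mathcal G^\iso_Q$); since $\Diff_c(Q)$ acts infinitesimally transitively on $\mathcal G^\iso_Q$, such fundamental vectors span every tangent space, whence $(J_L^{\mathcal G^\iso_Q})^*\omega^\KKS=\omega^{\mathcal G^\iso_Q}$. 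I expect the transitivity of the smaller group $\Diff_c(Q)$ — i.e.\ the appeal to \cite[Proposition~5.5]{GBV12} via $\kappa_*$ — to be the only nontrivial step; the rest is bookkeeping of restricting bundles and forms along open inclusions and matching the two Kostant--Kirillov--Souriau forms.
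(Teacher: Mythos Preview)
Your proposal is correct and follows essentially the same route as the paper: the corollary is obtained by restricting Theorem~\ref{T:Giso} over the open subsets $\mathcal E_Q\subseteq\mathcal E$ and $\mathcal G_Q\subseteq\mathcal G$, with the one extra input being the local transitivity of $\Diff_c(Q)$ on $\mathcal E^\iso_Q$ via \cite[Proposition~5.5]{GBV12} and the symplectomorphism $\kappa_*$. The paper leaves the verification of \eqref{E:KKSisoQ} implicit in the phrase ``From Theorem~\ref{T:Giso} we thus obtain,'' whereas you spell out the computation using the infinitesimal transitivity of $\Diff_c(Q)$ and the Lie algebra homomorphism $i$; this is the intended argument.
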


\begin{remark}
In the Legendrian case one has a description of the coadjoint orbit that does not use contact geometry. 
A transverse Legendrian submanifold $N\subseteq \mathbb PT^*Q$ projects to a codimension one submanifold $N_0=p(N)\subseteq Q$, while $N_0$ has a unique Legendrian lift to the projectivized cotangent bundle 
$$ N_0\ni x\mapsto\ann(T_xN_0)\in \mathbb PT_x^*Q.$$
Moreover, the line bundle $L=T(\mathbb PT^*Q)/\xi$ restricted to $N$ is canonically isomorphic to the pull back of the normal line bundle, $p|_N^*TN_0^\perp$, since the contact hyperplane at $y\in N$ is $\xi_y=(T_yp)^{-1}(T_xN_0)$ for $x=p(y)\in N_0$.
Hence, the coadjoint orbit of $\Diff_c(Q)$ described above can be seen as
$$
\left\{(N_0,\gamma_0)\middle|\begin{array}{l}\textrm{$N_0\subseteq Q$ has codimension one and}\\\textrm{$\gamma_0\in\Gamma^\infty(|\Lambda|_{N_0}\otimes (TN_0^\perp)^*)$ is nowhere vanishing}\end{array}\right\},
$$
embedded into $\mathfrak X(Q)^*$ via $Y\mapsto\int_{N_0}\gamma_0(Y|_{N_0}\textrm{ mod } TN_0)$.
Note that we have a canonical identification 
$|\Lambda|_{N_0}\otimes (TN_0^\perp)^*=|\Lambda|_Q\otimes\mathcal O_{N_0}^\perp$, where $\mathcal O_{N_0}^\perp$ denotes the orientation bundle of the normal bundle $TN_0^\perp$.
Hence, disregarding the latter orientation bundle, we may regard points in this coadjoint orbit as codimension one submanifolds $N_0$ in $Q$, weighted by a volume density of the ambient space $Q$ along $N_0$.
\end{remark}

\appendix

\section{Comparison with the dual pair for the Euler equation}\label{SS:Euler}

A dual pair of moment maps associated to the Euler equations
of an ideal fluid has been described by J.~E.~Marsden and A.~Weinstein \cite{MW}; it justifies the existence of Clebsch canonical variables for ideal fluid motion and also explains the Hamiltonian structure of point vortex solutions in a geometric way.
It has been shown in \cite{GBV12} that the pair of moment maps restricted to the open subset of embeddings does indeed form a symplectic dual pair.
In this section we relate this dual pair to the EPContact dual pair, see \eqref{E:JLRE}.

\subsection{The dual pair for the Euler equation}

The space of smooth maps from a closed manifold $S$ into a symplectic manifold $(M,\omega)$ can be equipped with a symplectic structure once a volume density $\mu\in\Gamma^\infty(|\Lambda|_S\setminus S)$ has been fixed.
Recall that the space of maps $C^\infty(S,M)$ is a Fr\'echet manifold in a natural way.
The symplectic form on $C^\infty(S,M)$ can be described by
\begin{equation}\label{E:omegaCSM}
\omega^{C^\infty(S,M)}_\phi(U,V)=\int_S\omega(U,V)\mu,
\end{equation}
where $U,V\in \Gamma^\infty(\phi^*TM)=T_{\phi}C^\infty(S,M)$ are vector fields along $\phi\in C^\infty(S,M)$.
The group of symplectic diffeomorphisms, $\Diff(M,\omega)$, acts on $C^\infty(S,M)$ in a natural way from the left, preserving the symplectic form \eqref{E:omegaCSM}.
Moreover, the group of volume preserving diffeomorphisms, $\Diff(S,\mu)$, acts from the right by reparametrization, also preserving the symplectic form \eqref{E:omegaCSM}.
Clearly, these two actions commute.

Suppose the symplectic form on $M$ is exact, that is $\omega=d\theta$ for some $1$-form $\theta$ on $M$.
In this case the $\Diff(S,\mu)$ action on $C^\infty(S,M)$ is Hamiltonian with equivariant moment map.
This moment map, denoted by $J^{C^\infty(S,M)}_R$, is given by the composition
$$
C^\infty(S,M)\to\Omega^1(S)\overset{\mu}{=}\Omega^1(S,|\Lambda|_S)\subseteq\mathfrak X(S)^*\to\mathfrak X(S,\mu)^*.
$$
Here the first arrow is given by pull back of $\theta$; the second identification is via the volume density $\mu$; the third is the inclusion of smooth sections into distributional sections of $T^*S\otimes|\Lambda|_S$; and the fourth map is the dual of the canonical inclusion $\mathfrak X(S,\mu)\subseteq\mathfrak X(S)$.
We will write this as
\begin{equation}\label{E:JRCSM}
J^{C^\infty(S,M)}_R(\phi)=\phi^*\theta\otimes\mu
\qquad\text{or}\qquad
\langle J^{C^\infty(S,M)}_R(\phi),X\rangle=\int_S(\phi^*\theta)(X)\mu,
\end{equation}
where $\phi\in C^\infty(S,M)$ and $X\in\mathfrak X(S,\mu)$.

Via the Lie algebra homomorphism $C^\infty(M)\to\mathfrak X_\ham(M,\omega)$, the Poisson algebra $C^\infty(M)$ acts on $C^\infty(S,M)$ in a Hamiltonian fashion with infinitesimally equivariant moment map
$$
J_L^{C^\infty(S,M)}\colon C^\infty(S,M)\to C^\infty(M)^*
$$
given by
\begin{equation}\label{E:JLCSM}
J_L^{C^\infty(S,M)}(\phi):=\phi_*\mu\qquad\text{or}\qquad\langle J_L^{C^\infty(S,M)}(\phi),h\rangle=\int_S(\phi^*h)\mu
\end{equation}
where $\phi\in C^\infty(S,M)$ and $h\in C^\infty(M)$.
This moment map is in fact equivariant with respect to the natural action of the full symplectic group, $\Diff(M,\omega)$.
\footnote{Of course this moment map is even $\Diff(M)$-equivariant, but $\Diff(M)$ does not act symplectically on $C^\infty(S,M)$ nor does it act by Poisson maps on $C^\infty(M)$.}

Restricting the actions and moment maps to the open subset $\Emb(S,M)\subseteq C^\infty(S,M)$ of embeddings, we obtain a symplectic dual pair, see \cite{GBV12} and \cite[Section~4.2]{GBV15}:
\begin{equation}\label{idealfluid}
C^\infty(M)^*\xleftarrow{\quad J_L^{\Emb(S,M)}\quad}\Emb(S,M)\xrightarrow{\quad J_R^{\Emb(S,M)}\quad}\mathfrak X(S,\mu)^*
\end{equation}

\subsection{Comparison with the EPContact dual pair}

We will now specialize to the symplectization of a contact manifold $(P,\xi)$, that is, we consider $M=L^*\setminus P$ equipped with the symplectic form $\omega^M$ obtained by restricting the canonical $2$-form $\omega^{L^*}$ on the total space of $L^*$, cf.~Section~\ref{SS:P}.
We will relate the dual pair for the Euler equation \eqref{idealfluid} with the  EPContact dual pair constructed in Theorem~\ref{T:dp}.

Recall the symplectic manifold $\mathcal M=C^\infty_\isom(|\Lambda|_S^*,L^*)$ in \eqref{eme}, with Hamiltonian actions of the groups $\Diff(P,\xi)$ and $\Diff(S)$.
The volume density $\mu$ on $S$ provides an identification 
\begin{equation}\label{imu}
\iota_\mu\colon\mathcal M\to C^\infty(S,M),\quad \iota_\mu(\Phi):=\Phi\circ\hat\mu,
\end{equation}
where $\hat\mu\in\Gamma^\infty(|\Lambda|_S^*)$ denotes the section dual to $\mu$.
Let $j\colon\mathfrak X(P,\xi)\to C^\infty(M)$, $j(X):=h_X^M$, denote the Lie algebra homomorphism provided by \eqref{E:XPxi}, see also \eqref{E:Poisson}.
In view of \eqref{E:hXequi}, $j$ is equivariant over the homomorphism $\Diff(P,\xi)\to\Diff(M,\omega^M)$.
Note that the composition of $j$ with the action $C^\infty(M)\to\mathfrak X_\ham(M,\omega^M)$ yields a Lie algebra homomorphism $\mathfrak X(P,\xi)\to\mathfrak X_\ham(M,\omega^M)\subseteq\mathfrak X(M,\omega^M)$ corresponding to the homomorphism of groups $\Diff(P,\xi)\to\Diff(M,\omega^M)$, see \eqref{E:dhX}.
Finally, let $i\colon\mathfrak X(S,\mu)\to\mathfrak X(S)$ denote the natural inclusion.
Clearly, $i$ is equivariant over the inclusion $\Diff(S,\mu)\subseteq\Diff(S)$.

These maps give rise to the following diagram:
\begin{equation}\label{D:PM}
\vcenter{
\xymatrix{
\mathfrak X(P,\xi)^*&&\mathcal M\ar[ll]_{J_L^{\mathcal M}}\ar[d]_\cong^-{\iota_\mu}\ar[rr]^{J_R^{\mathcal M}}&&\mathfrak X(S)^*\ar[d]^-{i^*}
\\
C^\infty(M)^*\ar[u]^-{j^*}&&C^\infty(S,M)\ar[ll]_{J_L^{C^\infty(S,M)}}\ar[rr]^{J_R^{C^\infty(S,M)}}&&\mathfrak X(S,\mu)^*
}}
\end{equation}
Here $i^*$ and $j^*$ denote the (equivariant) maps dual to the homomorphisms $i$ and $j$, respectively.

\begin{proposition}
The diagram \eqref{D:PM} commutes.
The map $\iota_\mu$ in \eqref{imu} is a symplectomorphism which is equivariant over the inclusion $\Diff(S,\mu)\subseteq\Diff(S)$ and equivariant over the homomorphism $\Diff(P,\xi)\to\Diff(M,\omega^M)$.
Moreover, 
$$
\iota_\mu(\mathcal E)=\{\phi\in C^\infty(S,M):\pi^M\circ\phi\in\Emb(S,P)\},
$$ 
where $\pi^M\colon M\to P$ denotes the restriction of the canonical projection $\pi^{L^*}\colon L^*\to P$.
\end{proposition}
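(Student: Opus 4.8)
The plan is to read the whole statement off the explicit coordinate descriptions already recorded in Remarks~\ref{R:mutriv} and \ref{R:mutriv2}, specialized to the symplectic piece $M=L^*\setminus P$ of $L^*$ and to the distinguished primitive $\theta^M=\theta^{L^*}|_M$ of $\omega^M$. First I would note that $\iota_\mu$ is nothing but the restriction of the diffeomorphism $\mathcal L\cong C^\infty(S,L^*)$, $\Phi\mapsto\Phi\circ\hat\mu$, of Remark~\ref{R:mutriv}: that diffeomorphism carries the open subset $\mathcal M\subseteq\mathcal L$ onto the open subset $C^\infty(S,L^*\setminus P)=C^\infty(S,M)$, cf.\ the footnote to \eqref{eme}, so $\iota_\mu$ is a diffeomorphism of Fr\'echet manifolds. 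By \eqref{E:omegaCSL} of Remark~\ref{R:mutriv2}, under this identification the symplectic form $\omega^{\mathcal M}$ becomes $(U,V)\mapsto\int_S\omega^M(U,V)\mu$, which is exactly the form $\omega^{C^\infty(S,M)}$ of \eqref{E:omegaCSM} for the choice $\omega=\omega^M$ (note $\omega^M=d\theta^M$ is exact, so the Euler dual pair of Section~\ref{SS:Euler} applies with $\theta=\theta^M$); hence $\iota_\mu$ is a symplectomorphism.

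For the two equivariance assertions I would invoke the formulas for the group actions from Remark~\ref{R:mutriv}. The identity $\Psi^{\mathcal L}_g(\phi)=\Psi^{L^*}_g\circ\phi$ restricts over $M$ to $\Psi^M_g\circ\phi$, which is the natural $\Diff(M,\omega^M)$-action on $C^\infty(S,M)$ read through the homomorphism $\Diff(P,\xi)\to\Diff(M,\omega^M)$; this yields equivariance over that homomorphism. The identity $\psi^{\mathcal L}_f(\phi)=\tfrac{f^*\mu}{\mu}\,(\phi\circ f)$ reduces to $\phi\circ f$ exactly when $f^*\mu=\mu$, i.e.\ for $f\in\Diff(S,\mu)$, where it is precisely the reparametrization action on $C^\infty(S,M)$; this yields equivariance over the inclusion $\Diff(S,\mu)\subseteq\Diff(S)$.

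Next I would check the two squares in \eqref{D:PM}. For the left square, \eqref{E:hCSL} and \eqref{E:JLCSL} give $\langle J_L^{\mathcal M}(\Phi),X\rangle=h_X^{\mathcal M}(\Phi)=\int_S(\phi^*h_X^M)\mu$ with $\phi=\iota_\mu(\Phi)$; since $h_X^M=j(X)$ by the very definition of $j$, this equals $\langle J_L^{C^\infty(S,M)}(\phi),j(X)\rangle=\langle j^*J_L^{C^\infty(S,M)}(\phi),X\rangle$, so that $j^*\circ J_L^{C^\infty(S,M)}\circ\iota_\mu=J_L^{\mathcal M}$ as required. For the right square, \eqref{E:JRCSL} identifies $J_R^{\mathcal M}(\Phi)$ with the $1$-form density $\phi^*\theta^{L^*}\otimes\mu=\phi^*\theta^M\otimes\mu$ on $S$, while \eqref{E:JRCSM} identifies $J_R^{C^\infty(S,M)}(\phi)$ with the restriction of that same $1$-form density to $\mathfrak X(S,\mu)$; since $i^*$ is exactly this restriction, the square commutes. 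The only point that wants care here is to take the primitive $\theta$ of $\omega^M$ used in \eqref{E:JRCSM} to be $\theta^M$, so that the two right legs literally coincide before restriction.

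Finally, for the ``Moreover'' I would unwind $\mathcal E=\Emb_\lin(|\Lambda|_S^*,L^*)$ through $\iota_\mu$. An element $\Phi\in\mathcal M$ is already a fiberwise linear isomorphism, so writing $\varphi=\pi^{L^*}\circ\Phi\circ\hat\mu=\pi^M\circ\phi$: if two distinct fibers of $|\Lambda|_S^*$ lay over points $x_1\neq x_2$ with $\varphi(x_1)=\varphi(x_2)$, both would be carried isomorphically onto the single line $L^*_{\varphi(x_1)}$ and $\Phi$ could not be injective; conversely, if $\varphi$ is an embedding then so is $\Phi$. Hence $\Phi\in\mathcal E$ iff $\varphi\in\Emb(S,P)$, that is, $\iota_\mu(\mathcal E)=\{\phi\in C^\infty(S,M):\pi^M\circ\phi\in\Emb(S,P)\}$, which is also what the footnote to \eqref{eee} records. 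Overall I do not expect a genuine obstacle: the proposition is a bookkeeping exercise, and the only real care is needed in matching the two differently-presented dual targets --- $\mathfrak X(P,\xi)^*$ with $C^\infty(M)^*$ via $j^*$, and $\mathfrak X(S)^*$ with $\mathfrak X(S,\mu)^*$ via $i^*$ --- and in fixing the primitive $\theta^M$.
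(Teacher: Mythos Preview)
Your proposal is correct and follows essentially the same approach as the paper: both rely on the explicit descriptions from Remarks~\ref{R:mutriv} and~\ref{R:mutriv2}, citing \eqref{E:omegaCSL} against \eqref{E:omegaCSM} for the symplectomorphism, \eqref{E:JLCSL} against \eqref{E:JLCSM} for the left square, and \eqref{E:JRCSL} against \eqref{E:JRCSM} for the right square. You merely spell out more detail than the paper does---in particular, the paper's proof does not explicitly address the ``Moreover'' statement at all (relying on the footnote to~\eqref{eee}), whereas you give an argument for it.
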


\begin{proof}
Clearly, $\iota_\mu$ is an equivariant diffeomorphism, see Remark~\ref{R:mutriv}.
It is symplectic in view of \eqref{E:omegaCSL} and \eqref{E:omegaCSM}.
The right hand side of the diagram commutes in view of \eqref{E:JRCSL} and \eqref{E:JRCSM}.
The left hand side of the diagram commutes in view of \eqref{E:JLCSL} and \eqref{E:JLCSM}.
\end{proof}

The first line in \eqref{D:PM} becomes a dual pair only when restricted to $\mathcal E$, while the second line needs to be restricted to $\Emb(S,M)$ to become a dual pair.
Note that the image $\iota_\mu(\mathcal E)$ is an open subset (strict, in general) of $\Emb(S,M)$.

\end{document}